\newcommand{\apref}[3]{\hyperref[#2]{#1\ref*{#2}#3}}
\theoremstyle{plain}
\newtheorem{prop}{Proposition}[section]
\newtheorem{lemma}[prop]{Lemma}
\newtheorem{thm}[prop]{Theorem}
\newtheorem{cor}[prop]{Corollary}
\theoremstyle{definition}
\newtheorem{defi}[prop]{Definition}
\theoremstyle{remark}
\newcommand{\mapeins}{\phi_1}
\newcommand{\mapeinsn}{\phi_{1,n}}
\newcommand{\mapeinseins}{\phi_{1,1}}
\newcommand{\mapzwei}{\phi_2}
\newcommand{\mapzwein}{\phi_{2,n}}
\newcommand{\mgf}{\tau}
\newcommand{\mapdrei}{\psi_1}
\newcommand{\mapdrein}{\psi_{1,n}}
\newcommand{\mapvier}{\psi_2}
\newcommand{\mapviern}{\psi_{2,n}}
\DeclareMathOperator{\discr}{discr}
\DeclareMathOperator{\sign}{sign}
\DeclareMathOperator{\bdinterval}{bdinterval}
\DeclareMathOperator{\interval}{interval}
\DeclareMathOperator{\UGeo}{UGeo}
\DeclareMathOperator{\PGeo}{PGeo}
\DeclareMathOperator{\FP}{FP}
\DeclareMathOperator{\POQF}{POQF}
\DeclareMathOperator{\OQF}{OQF}
\DeclareMathOperator{\base}{base}
\DeclareMathOperator{\QF}{QF}
\DeclareMathOperator{\SL}{SL}
\DeclareMathOperator{\PSL}{PSL}
\DeclareMathOperator{\tr}{tr}
\DeclareMathOperator{\Ima}{Im}
\DeclareMathOperator{\Rea}{Re}
\DeclareMathOperator{\sgn}{sgn}
\DeclareMathOperator{\pr}{pr}
\DeclareMathOperator{\Stab}{Stab}
\newcommand{\st}{\text{st}}
\newcommand\N{\mathbb{N}}
\newcommand\R{\mathbb{R}}
\newcommand\Z{\mathbb{Z}}
\newcommand\C{\mathbb{C}}
\newcommand{\h}{\mathbb{H}}
\newcommand{\mc}[1]{\mathcal #1}
\newcommand{\wt}{\widetilde}
\newcommand{\wh}{\widehat}
\newcommand{\eps}{\varepsilon}
\DeclareMathOperator{\id}{id}
\newcommand{\sceq}{\mathrel{\mathop:}=}
\newcommand{\seqc}{\mathrel{=\mkern-4.5mu{\mathop:}}}
\newcommand{\mat}[4]{\begin{pmatrix} #1&#2\\#3&#4\end{pmatrix}}
\newcommand{\bmat}[4]{\begin{bmatrix} #1&#2\\#3&#4\end{bmatrix}}
\newcommand{\textmat}[4]{\left(\begin{smallmatrix} #1&#2 \\ #3&#4
\end{smallmatrix}\right)}
\newcommand{\textbmat}[4]{\left[\begin{smallmatrix} #1&#2 \\ #3&#4
\end{smallmatrix}\right]}
\begin{document}

\title[Reduction theory for quadratic forms over $\Z{[}\lambda{]}$]{A geometric reduction theory for indefinite binary quadratic forms over $\Z[\lambda]$}
\author[A.\@ Pohl]{Anke D.\@ Pohl}
\address{Max Planck Institute for Mathematics, Vivatsgasse 7,  D-53119 Bonn}
\email{pohl@mpim-bonn.mpg.de}
\author[V.\@ Spratte]{Verena Spratte}
\address{Mathematisches Institut, Georg-August-Universit\"at G\"ottingen,  Bunsenstr. 3-5, D-37073 G\"ottingen}
\email{verena.spratte@stud.uni-goettingen.de}
\subjclass[2010]{Primary: 11H55; Secondary: 37D40}
\keywords{quadratic forms, Hecke triangle groups, reduction theory, symbolic dynamics}
\begin{abstract} 
Gauss' classical reduction theory for indefinite binary quadratic forms over $\Z$ has originally been proven by means of purely algebraic and arithmetic considerations. It was later discovered that this reduction theory is closely related to a certain symbolic dynamics for the geodesic flow on the modular surface, and hence can also be deduced geometrically. In this article, we use certain symbolic dynamics for the geodesic flow on Hecke triangle surfaces (also the non-arithmetic ones) to develop reduction theories for the indefinite binary quadratic forms associated to Hecke triangle groups. Moreover, we propose an algorithm to decide for any $g\in \PSL_2(\R)$ whether or not $g$ is contained in the Hecke triangle group under consideration, and provide an upper estimate for its run time.
\end{abstract}
\maketitle

\section{Introduction}

Quadratic forms arise in many contexts and have numerous applications. Since early on, reduction theories play an important role in their study. Gauss---in his famous opus \textit{Disquisitiones Arithmeticae} \cite{Gauss}---developed a reduction theory for indefinite binary quadratic forms over $\Z$ which ever since has been extremely influential. His proof was based on the algebraic and arithmetic properties of these quadratic forms. Hurwitz \cite{Hurwitz_cf, Hurwitz_red} provided an alternative geometric and dynamical proof of this reduction theory by modelling the quadratic forms as objects on the hyperbolic plane and the steps in the reduction algorithm as the action of Riemannian isometries. For a modern account in terms of discretizations for the geodesic flow on the modular surface, symbolic dynamics and continued fractions we refer to \cite{Artin, Series}.

For various hyperbolic surfaces and two-dimensional good hyperbolic orbifolds $X$, the first-named author developed discretizations and symbolic dynamics for the geodesic flow on $X$ with certain special properties that enabled dynamical characterizations of the Maass cusp forms for the associated Fuchsian groups \cite{Pohl_diss, Pohl_Symdyn2d, Moeller_Pohl, Pohl_gamma, Pohl_mcf_general, Pohl_spectral_hecke}. In this article, we show that for any cofinite Hecke triangle surface, these discretizations and symbolic dynamics induce a reduction theory for the indefinite binary quadratic forms associated to the Hecke triangle group under consideration. The techniques we use are uniform for the arithmetic and the non-arithmetic Hecke triangle groups. For the modular group $\PSL_2(\Z)$, we essentially recover Gauss' reduction theory. This two-fold application of the symbolic dynamics alludes at intimate relations between Maass cusp forms and indefinite binary quadratic forms for all Hecke triangle groups. Moreover, 
we expect that the 
same techniques yield reduction theories for 
indefinite binary quadratic forms for other `admissible' Fuchsian lattices as well.

The coefficients of any indefinite binary quadratic form associated to a Hecke triangle group $\Gamma$ are contained in $\Z[\lambda]$ (hence the title), where $\lambda$ is the cusp width of $\Gamma$, thus
\[
 \lambda = 2\cos\frac{\pi}{q} 
\]
for some $q\in\N$, $q\geq 3$. For $\lambda >1$ (i.\,e., for $q>3$), not every indefinite binary quadratic form over $\Z[\lambda]$ is associated to $\Gamma$. We provide a counterexample in Section~\ref{whenHecke} below. Any fundamental domain for $\Gamma$ in the hyperbolic plane $\h$ yields an algorithm for deciding whether or not any given element $g\in\PSL_2(\R)$ belongs to $\Gamma$, which \textit{a fortiori} overcomes the decision problem for quadratic forms. For the arithmetic Hecke triangle groups, algorithms of another type allow us to decide immediately which elements of $\PSL_2(\R)$ belong to them. For the non-arithmetic Hecke triangle groups such a decision algorithm with uniformly bounded run time is not known yet. Our techniques for the reduction theory enable us to develop an algorithm which takes advantage of a fundamental set for $\Gamma$ in the unit tangent bundle $S\h$ and for which we can provide an upper estimate for its run time. We present the details in Section~\ref{whenHecke} below.

The reduction theory for the quadratic forms associated to $\Gamma$ makes use of the standard relations between these forms, the hyperbolic matrices in $\Gamma$, the periodic geodesics on the Hecke triangle surface $\Gamma\backslash\h$, and the lifts of these geodesics to $\h$. For the convenience of the reader and to obtain a certain level of selfcontainedness of this article, we provide a detailed account on these relations in Sections~\ref{sec:relations} and \ref{sec:rel_hecke}, after introducing the main objects in Section~\ref{sec:prelims}. The reduction algorithm, which we present and prove in Section~\ref{sec:redtheory}, not only finds all reduced forms that are equivalent to any given quadratic form but also the associated hyperbolic matrices. An alternative reduction theory is proposed by Ressler \cite{Ressler_reduction} based on Rosen's $\lambda$-continued fractions.

\section{Preliminaries}\label{sec:prelims}

\subsection{Quadratic forms}

A \textit{binary quadratic form} over $\R$ is a homogeneous polynomial (function) 
\[
f(x,y) = Ax^2 + Bxy + Cy^2
\]
of degree $2$ with real coefficients. We denote it by
\[ 
 f = [A,B,C].
\]
We call $f$ \textit{indefinite} if its discriminant 
\[ 
 D \sceq D(f) \sceq B^2 - 4AC
\]
is positive.  Let 
\[
\QF \sceq \QF(\R) \sceq \{ f=[A,B,C] \mid D>0 \}
\]
be the set of indefinite binary quadratic forms over $\R$. Since we restrict our considerations to indefinite binary quadratic forms throughout this article, we refer to the elements of $\QF$ as quadratic forms only. 

To each $f=[A,B,C] \in \QF$ we associate the two values
\[
 x_+ \sceq x_+(f) \sceq \frac{-B + \sqrt{D}}{2A} \quad\text{and}\quad x_- \sceq x_-(f) \sceq \frac{-B-\sqrt{D}}{2A},
\]
which we call the \textit{zeros} of $f$. If $A=0$, hence $D=B^2$, then these are to be understood as
\[
\begin{cases}  x_+  = 0 \\  x_- = \infty \end{cases}  \text{ if $B>0$, and}\quad  
\begin{cases} x_+  = \infty \\ x_- = 0 \end{cases} \text{ if $B<0$.}
\]
For $A\not=0$, the values $x_\pm$ are the two zeros of $f(x,1)$.

\subsection{The hyperbolic plane and its geometry}\label{sec:hyperbolic}

\subsubsection*{The model}
Throughout we use the upper half plane model for the hyperbolic plane, that is, 
\[
 \h \sceq \{ z\in\C \mid \Ima z > 0\}
\]
endowed with the hyperbolic Riemannian metric given by the line element
\[
 ds^2 = \frac{dz d\overline{z}}{(\Ima z)^2}. 
\]
We identify its geodesic boundary with $P^1(\R) \cong \R \cup \{\infty\}$. The action of the group of Riemannian isometries on $\h$ extends continuously to $P^1(\R)$.

\subsubsection*{Riemannian isometries}
The group of orientation-preserving Riemannian isometries of $\h$ is isomorphic to 
\[
 G\sceq \PSL_2(\R) = \SL_2(\R)/\{\pm\id\}, 
\]
where the action of $G$ on $\h\cup P^1(\R)$ is given by fractional linear transformations. To be more explicit, we denote elements in $G$ as matrices but with square brackets. Hence if $g\in G$ is represented by $\textmat{a}{b}{c}{d}\in \SL_2(\R)$ then we write
\[
 g = \bmat{a}{b}{c}{d}.
\]
At any occasion where we use the `entries' $a,b,c,d$ of $g$ for some formula, it is easily seen that this formula is independent of the choice of the representative of $g$.

The action of $\textbmat{a}{b}{c}{d}\in G$ on $\h\cup P^1(\R)$ is given by
\begin{align*}
\bmat{a}{b}{c}{d}.z & = 
\begin{cases}
\frac{az+b}{cz+d} & \text{for $cz+d\not=0$}
\\
\infty & \text{for $cz+d=0$} 
\end{cases}
\quad\text{if $z\in \h\cup\R$,}
\intertext{and}
\bmat{a}{b}{c}{d}.\infty &= 
\begin{cases}
\frac{a}{c} & \text{for $c\not=0$}
\\
\infty & \text{for $c=0$.}
\end{cases}
\end{align*}

We denote the induced action of $G$ on the unit tangent bundle $S\h$ of $\h$ by $g.v$ for $g\in G$ and $v\in S\h$.

\subsubsection*{Geodesics}

Let $\UGeo$ denote the set of \textit{(unit speed) geodesics} on $\h$, that is, the geodesics parametrized by arc length. One such geodesic is 
\[
 \gamma_\st \colon \R \to \h,\quad t\mapsto ie^t,
\]
which we call the \textit{standard geodesic}. By the very definition of Riemannian isometries, the pointwise action of $G$ maps geodesics to geodesics. It is well-known that this action is simple transitive.  

\begin{prop}\label{twopoint}
For every geodesic $\gamma\in\UGeo$ there exists a unique element $g\in G$ such that $g.\gamma = \gamma_\st$. 
\end{prop}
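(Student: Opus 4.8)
The plan is to construct the element $g$ explicitly from the endpoints of $\gamma$ and then verify uniqueness by analyzing the stabilizer of $\gamma_\st$. A geodesic $\gamma\in\UGeo$ is determined by its two distinct endpoints on $P^1(\R)$, say $\gamma(-\infty) = \alpha$ and $\gamma(+\infty) = \omega$, together with a choice of parametrization by arc length (the ``speed-one'' condition fixes the magnitude, but there is still the freedom of translating the parameter). The standard geodesic $\gamma_\st$ has endpoints $\gamma_\st(-\infty) = 0$ and $\gamma_\st(+\infty) = \infty$. So the first step is: choose any $g_0\in G$ sending the ordered pair $(\alpha,\omega)$ to the ordered pair $(0,\infty)$; since $G$ acts transitively on ordered triples of distinct boundary points, and in particular on ordered pairs, such a $g_0$ exists. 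Then $g_0.\gamma$ is a unit-speed geodesic with the same endpoints as $\gamma_\st$, hence equals $t\mapsto \gamma_\st(t+s)$ for some $s\in\R$. Composing with the isometry $\textbmat{e^{-s/2}}{0}{0}{e^{s/2}}$, which acts on $\gamma_\st$ by $\gamma_\st(t)\mapsto \gamma_\st(t-s)$, we obtain $g\in G$ with $g.\gamma = \gamma_\st$. This settles existence.

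For uniqueness, suppose $g_1.\gamma = \gamma_\st = g_2.\gamma$. Then $h\sceq g_1 g_2^{-1}$ fixes $\gamma_\st$ pointwise as a parametrized curve. I would then identify the pointwise stabilizer of $\gamma_\st$ in $G$: an element fixing both endpoints $0$ and $\infty$ must be of the form $\textbmat{a}{0}{0}{a^{-1}}$ with $a>0$, acting on the imaginary axis by $ie^t\mapsto ia^2 e^t$; requiring that this fix every point $\gamma_\st(t) = ie^t$ forces $a^2 = 1$, i.e.\ $a=1$, so $h = \id$ in $G$ and $g_1 = g_2$.

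The main obstacle, such as it is, lies in making precise the reduction ``a unit-speed geodesic with endpoints $0$ and $\infty$ is a time-translate of $\gamma_\st$.'' This uses that a geodesic of $\h$ is uniquely determined by its unordered pair of endpoints as an unparametrized curve, and that arc-length parametrizations of a fixed curve differ only by an additive constant in the parameter (and a possible orientation reversal, which is excluded here because the endpoints are matched in order). One can either invoke the standard classification of geodesics in the upper half-plane model (vertical half-lines and half-circles meeting $\R$ orthogonally) or argue directly that $G$ already acts transitively on $\UGeo$ via the recipe above and that the transitivity on ordered pairs of boundary points plus the computation of the stabilizer yields simple transitivity. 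Either way the argument is short; the only care needed is bookkeeping the orientation so that $g.\gamma$ and $\gamma_\st$ have their forward endpoints aligned before correcting the time-shift.
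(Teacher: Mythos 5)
Your proof is correct. Note that the paper offers no proof of this proposition at all: it simply records the simple transitivity of $G$ on unit-speed geodesics as ``well-known'' and states the result. Your argument is the standard one that fills this gap: transitivity on ordered pairs of distinct boundary points reduces to the case of a unit-speed geodesic with backward endpoint $0$ and forward endpoint $\infty$, which must be a time-translate $\gamma_\st(\cdot+s)$ of the standard geodesic and is then corrected by the diagonal element $\textbmat{e^{-s/2}}{0}{0}{e^{s/2}}$; uniqueness follows from computing the pointwise stabilizer of $\gamma_\st$, which your calculation correctly shows is trivial. (This stabilizer computation is essentially the same one the paper performs later, inside the proof of Proposition~\ref{displacement}, when it determines all hyperbolic elements fixing $0$ and $\infty$.)

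Two small remarks. First, your parenthetical claim that $G=\PSL_2(\R)$ acts transitively on ordered \emph{triples} of distinct boundary points is false --- $\PSL_2(\R)$ preserves the cyclic orientation of $P^1(\R)$ and is only transitive on positively oriented triples; but the statement you actually use, transitivity on ordered \emph{pairs}, is true (the stabilizer of $(0,\infty)$ is the one-parameter diagonal subgroup), so this does not affect the argument. Second, you are right to flag that the reduction ``endpoints $0$ and $\infty$ forces the image to be the imaginary axis'' needs the classification of geodesics of $\h$ as an input; since the paper derives that classification as a \emph{consequence} of this proposition, a fully self-contained treatment should establish independently that the imaginary axis is a geodesic (e.g.\ as the fixed-point set of the isometry $z\mapsto-\overline z$) and that a complete geodesic is determined by its ideal endpoints. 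With that standard fact in hand, your proof is complete.
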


Proposition~\ref{twopoint} implies that the images of the geodesics on $\h$ are exactly the semicircles that are orthogonal to $\R$, and all vertical lines (i.\,e., semicircles through $i\infty$).

For $\gamma\in\UGeo$ we set
\[
 \gamma(\pm\infty) \sceq \lim_{t\to\pm\infty} \gamma(t) \quad\in P^1(\R).
\]

For any $v\in S\h$ we let $\gamma_v$ denote the geodesic on $\h$ determined by $v$, that is,
\[
 \gamma'_v(0) = v.
\]
E.\,g., the standard geodesic $\gamma_\st$ is determined by $v_\st=\frac{\partial}{\partial y}\vert_i$. The geodesic flow on $\h$ is then given by
\[
 \R\times S\h \to S\h,\quad (t,v) \mapsto \gamma_v'(t).
\]
Finally, for any $a,b\in\h\cup P^1(\R)$, $a\not=b$, we let $[a,b]_\gamma$ denote the (non-oriented) \textit{geodesic arc} connecting $a$ and $b$ and containing both endpoints. We use $[a,b)_\gamma$ to denote the geodesic arc containing $a$ but not $b$, and similarly use $(a,b]_\gamma$ and $(a,b)_\gamma$. We call a geodesic arc \textit{maximal} if it is the image of a complete geodesic.

\subsubsection*{Hyperbolic matrices}

An element of $G$ is called \textit{hyperbolic} if it fixes exactly two points in $\h\cup P^1(\R)$. In that case both fixed points are in $P^1(\R)$. As it is well-known, $\textbmat{a}{b}{c}{d}\in G$ is hyperbolic if and only if $|a+d|>2$. Let
\[
 G_h \sceq \{g\in G \mid \text{$g$ hyperbolic}\}
\]
denote the set of hyperbolic elements in $G$.

Let $g\in G_h$ and suppose that $g$ fixes the two points $w_1,w_2\in P^1(\R)$. One of these two fixed points is \textit{attracting}, say $w_1$, the other one \textit{repelling}. This means that for all $z\in\h$ we have
\begin{equation}\label{def_attracting}
  \lim_{n\to\infty} g^n.z = w_1 \quad\text{and}\quad \lim_{n\to\infty} g^{-n}.z=w_2.
\end{equation}
 
Suppose that $g=\textbmat{a}{b}{c}{d}$  and set
\begin{equation}\label{def_lambda}
 \lambda_{\pm}(g) \sceq \frac{|a+d| \pm \sqrt{(a+d)^2 -4}}{2}.
\end{equation}
For $c\not=0$ we associate to $g$ the two values
\begin{align}
\label{def_wa1} w_a &\sceq w_a(g) \sceq \frac{\lambda_+(g) - \sgn(a+d) \cdot d}{\sgn(a+d) \cdot c}
\intertext{and}
\label{def_wr1} w_r &\sceq w_r(g) \sceq \frac{\lambda_-(g) - \sgn(a+d)\cdot d}{\sgn(a+d) \cdot c}.
\end{align}
If $c=0$ then we associate to $g$ the values
\begin{equation}\label{def_w2}
 w_a \sceq \infty, \ w_r \sceq \frac{b}{a-a^{-1}} \quad\text{if $|a|>1$,}
\end{equation}
and
\begin{equation}\label{def_w3}
 w_a \sceq \frac{b}{a-a^{-1}},\ w_r \sceq \infty \quad\text{if $|a|<1$.}
\end{equation}

\begin{lemma}\label{attr_rep}
Let $g\in G_h$. Its attracting fixed point is $w_a(g)$, and its repelling fixed point is $w_r(g)$.
\end{lemma}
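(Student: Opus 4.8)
The plan is to reduce the general case to the explicit computation for the standard geodesic, exploiting the fact that $w_a$ and $w_r$ are defined by formulas that transform correctly under conjugation. First I would verify the claim directly for the family of diagonal matrices $d_\mu = \textbmat{\mu}{0}{0}{\mu^{-1}}$ with $\mu>1$: here $c=0$ and $|a|=\mu>1$, so by \eqref{def_w2} we have $w_a=\infty$ and $w_r = 0$, and one checks from the action of $G$ on $\h\cup P^1(\R)$ that $d_\mu^n.z = \mu^{2n}z\to\infty$ and $d_\mu^{-n}.z\to 0$ for every $z\in\h$, matching \eqref{def_attracting}. This settles the lemma for all $g$ conjugate to such a $d_\mu$.

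Next I would show every $g\in G_h$ is conjugate to some $d_\mu$, $\mu>1$, and track how the two fixed points behave under conjugation. Since $g$ is hyperbolic it has two distinct fixed points $w_1,w_2\in P^1(\R)$; pick $h\in G$ with $h.w_2=0$ and $h.w_1=\infty$ (possible because $G$ acts transitively on ordered pairs of distinct boundary points). Then $hgh^{-1}$ fixes $0$ and $\infty$, hence is diagonal, i.e. $hgh^{-1}=d_\mu^{\pm1}$ for a unique $\mu>1$; replacing the choice of which fixed point goes to $\infty$ if necessary, we may assume $hgh^{-1}=d_\mu$. By the already-established diagonal case and \eqref{def_attracting}, which is manifestly conjugation-equivariant ($\lim g^n.z = w_1$ iff $\lim (hgh^{-1})^n.(h.z) = h.w_1$), the attracting fixed point of $g$ is $h^{-1}.\infty = w_1$ and the repelling one is $h^{-1}.0 = w_2$. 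So it remains to check that the \emph{formulas} \eqref{def_wa1}--\eqref{def_w3} assign $w_a(g)=w_1$ and $w_r(g)=w_2$, which is purely algebraic.

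For that algebraic core I would argue as follows. Note $\lambda_\pm(g)$ depend only on $|a+d| = |\tr g|$, which is conjugation-invariant, and $\lambda_+(g)\lambda_-(g)=1$, $\lambda_+(g)>1>\lambda_-(g)>0$; moreover $\lambda_+(g)$ and $\lambda_-(g)$ are exactly the eigenvalues of $\sgn(a+d)\cdot\textmat{a}{b}{c}{d}\in\SL_2(\R)$ (the sign normalization makes the trace positive). Writing $\eps=\sgn(a+d)$, the eigenvector of $\eps g$ for eigenvalue $\lambda_+(g)$ is, when $c\neq0$, proportional to $\bigl(\lambda_+(g)-\eps d,\ \eps c\bigr)^{\mathsf T}$, and the corresponding fixed point in $P^1(\R)$ is the ratio of its two components, which is precisely \eqref{def_wa1}; similarly \eqref{def_wr1} is the fixed point coming from the $\lambda_-(g)$-eigenvector. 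The key point is then that the $\lambda_+$-eigenline is the attracting direction: conjugating $\eps g$ to the diagonal form $\diag(\lambda_+(g),\lambda_-(g))$, the attracting fixed point $\infty$ corresponds to the first coordinate axis, i.e. the $\lambda_+$-eigenline, matching the diagonal computation above. The cases $c=0$, $|a|\neq1$ in \eqref{def_w2}--\eqref{def_w3} are handled by the same eigenvector bookkeeping (the eigenlines are then the coordinate axes, giving the fixed points $\infty$ and $b/(a-a^{-1})$, with the larger eigenvalue dictating which is attracting). The only mild subtlety — and the step I expect to require the most care — is matching signs and the $\sgn(a+d)$ normalization consistently across all cases so that `$\lambda_+$' genuinely corresponds to the \emph{attracting} rather than the repelling direction; once the diagonal case is pinned down this is just careful substitution, but it is where an off-by-a-sign error would hide.
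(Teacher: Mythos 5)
Your proposal is a genuinely different route from the paper's proof, which is essentially a one-liner: for $c\neq 0$ the paper simply cites \cite{Moeller_Pohl}, and for $c=0$ it invokes $g'(z)=a^2$ (the derivative criterion at a fixed point). You instead give a self-contained argument: verify the diagonal case dynamically, conjugate an arbitrary hyperbolic element to $d_\mu$, and identify the formulas \eqref{def_wa1}--\eqref{def_w3} as the $\lambda_\pm$-eigenlines, with the $\lambda_+$-eigenline matched to the attracting direction via the diagonalization. This structure is correct, and the eigenvector identification for $c\neq 0$ checks out: $(\lambda_+(g)-\eps d,\ \eps c)^{\mathsf T}$ is indeed a $\lambda_+$-eigenvector of $\eps g$, and its ratio is \eqref{def_wa1} (e.g.\ for $\textbmat{2}{0}{1}{1/2}$ one gets $w_a=3/2$, $w_r=0$, which the derivative test confirms). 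What your proof buys over the paper's is that it is complete and uniform; what it costs is exactly the sign bookkeeping you flagged.

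And that bookkeeping is where the one concrete problem sits, in the $c=0$ case. For $g=\textbmat{a}{b}{0}{a^{-1}}$ the eigenlines are \emph{not} both coordinate axes when $b\neq 0$: the $a$-eigenvector is $(1,0)^{\mathsf T}$ (point $\infty$), but the $a^{-1}$-eigenvector is $(-b,\ a-a^{-1})^{\mathsf T}$, whose ratio is $-b/(a-a^{-1})$, not $b/(a-a^{-1})$. A direct check confirms this: $g.z=a^2z+ab$ has finite fixed point $ab/(1-a^2)=-b/(a-a^{-1})$; for $\textbmat{2}{1}{0}{1/2}$ that is $-2/3$, whereas $b/(a-a^{-1})=2/3$. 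So your computation, carried out honestly, contradicts the value you wrote down (which you copied from \eqref{def_w2}--\eqref{def_w3}); in fact it exposes a sign slip in the paper's own formulas, which should read $b/(a^{-1}-a)$. You should either correct this step or explicitly note the discrepancy with \eqref{def_w2}--\eqref{def_w3}. One further minor point worth a sentence in a written version: the change-of-basis matrix $P$ of eigenvectors must be normalized to have positive determinant (flip the sign of one column if necessary) so that the conjugation to $\diag(\lambda_+,\lambda_-)$ is by an element of $\PSL_2(\R)$ acting on $\h\cup P^1(\R)$; this does not affect the projective fixed points.
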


\begin{proof}
For $g=\textbmat{a}{b}{c}{d}$ with $c\not=0$, the statement is shown in \cite{Moeller_Pohl}. For $c=0$ it easily follows from $g'(z) = a^2$.
\end{proof}

\subsubsection*{Topological notations and an order on $P^1(\R)$} 
Let $M\subseteq \h$ be a subset. Throughout we use $M^\circ$ to denote the interior of $M$, $\overline{M}$ the closure of $M$ in $\h$, and $\partial M$ the boundary of $M$ in $\h$.

All our constructions are independent of the model we use for the hyperbolic plane. However, occasionally the exposition is less cumbersome if we rely on some special properties of $\h$ and $P^1(\R)$. In this spirit, we introduce an auxiliary order on $P^1(\R)$ via its identification with $\R\cup\{\infty\}$ and defining $r<\infty$ for all $r\in\R$.

\subsection{Cofinite Hecke triangle groups}\label{sec:hecketriangle}

Let $q\in\N$, $q\geq 3$, and set $\lambda_q\sceq 2\cos\tfrac{\pi}{q}$. The \textit{Hecke triangle group} $\Gamma_q$ with parameter $q$ is the lattice (i.e., discrete and cofinite subgroup) in $G$ which is generated by the two elements
\begin{equation}\label{generators}
 S \sceq \bmat{0}{1}{-1}{0}\quad\text{and}\quad T_q\sceq \bmat{1}{\lambda_q}{0}{1}.
\end{equation}
For $q=3$, the Hecke triangle group $\Gamma_3$ is the well-investigated modular group $\PSL_2(\Z)$. The family $\{ \Gamma_q \mid q\in\N,\ q\geq 3\} $ of Hecke triangle groups contains arithmetic lattices (for $q\in\{3,4,6\}$) as well as non-arithmetic ones (for $q\in\N$, $q\notin\{1,2,3,4,6\}$). However, as we will see, our techniques are uniform throughout. 

We denote the associated hyperbolic surface (more precisely, the real hyperbolic good orbifold) by
\[
 X_q\sceq\Gamma_q\backslash\h,
\]
that is, the orbit space of the left-action of $\Gamma_q$ on $\h$. We usually omit the subscript `$q$' if understood implicitly, thus
\[
 \lambda\sceq\lambda_q,\quad \Gamma\sceq \Gamma_q,\quad X\sceq X_q,\quad \ldots.
\]

We call a subset $\mc F$ of $\h$ a \textit{fundamental domain} for $\Gamma$ if $\mc F$ is open and connected, 
\[
 \mc F \cap g.\mc F = \emptyset \quad\text{for all $g\in\Gamma$, $g\not=\id$,}
\]
and 
\[
 \h = \bigcup_{g\in\Gamma} g.\overline{\mc F}.
\]
One example of a fundamental domain for $\Gamma$ is (see Figure~\ref{PS_funddom})
\begin{equation}\label{funddom}
 \mc F \sceq \left\{ z\in\h \left\vert\ |z|>1,\ |\Rea z| < \tfrac{\lambda}{2}\right.\right\}.
\end{equation}
\begin{center}
\begin{figure}[h]
\includegraphics{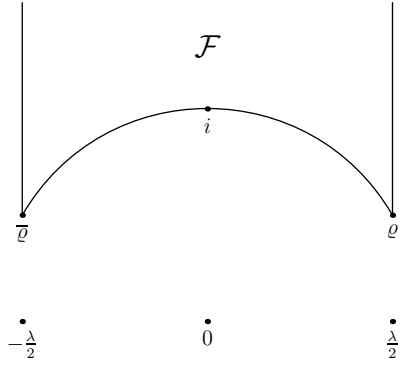} 
\caption{A fundamental domain for $\Gamma$ in $\h$}\label{PS_funddom}
\end{figure}
\end{center}
The side-pairings for $\mc F$ are given by the generators~\eqref{generators}: Let 
\[
 \varrho \sceq \frac{\lambda + i\sqrt{4-\lambda^2}}{2}.
\]
The vertical sides $[\overline\varrho,\infty)_\gamma$ and $[\varrho,\infty)_\gamma$ are mapped to each other by $T$, the bottom sides 
$[\overline\varrho,i]_\gamma$ and $[i,\varrho]_\gamma$ are identified by $S$. Hence, the Hecke triangle surface $X=\Gamma\backslash\h$ has one cusp (represented by $\infty$ on $\h$) and two singularity points (represented by $i$ and  $\varrho$).

Geometric entities on $X$ are defined as $\Gamma$-orbits ($\Gamma$-equivalence classes) of the corresponding entities on $\h$. For example, if $v\in S\h$ is a unit tangent vector of $\h$ then
\[
\wh v \sceq \Gamma.v
\]
is a \textit{unit tangent vector} of $X$, resulting in the unit tangent bundle of $X$ being
\[
 S(\Gamma\backslash\h) = \Gamma\backslash S\h.
\]
Likewise, if $\gamma\in \UGeo$ is a geodesic on $\h$, then 
\[
 \wh\gamma \sceq \Gamma.\gamma
\]
is a \textit{(unit speed) geodesic} on $X$. 

Throughout we denote the entities on $X$ by a representative on $\h$ endowed with  $\wh{\ \ }$. Alternatively, we use the canonical quotient maps
\[
\pi\colon \h \to \Gamma\backslash\h \qquad\text{resp.\@}\qquad \pi\colon S\h \to \Gamma\backslash S\h
\]
from $\h$ to $X$ respectively from $S\h$ to $SX$. Then the geodesic $\gamma\in\UGeo$ determines the geodesic $\wh\gamma = \pi(\gamma)$ on $X$ given by
\[
 \wh\gamma = \pi(\gamma)\colon \R \to X,\quad t\mapsto \pi(\gamma(t)) = \Gamma.\gamma(t)
\]
and
\[
 \wh\gamma'(t) = \pi(\gamma'(t)).
\]
If $\eta\in \UGeo$ represents the geodesic $\wh\gamma$ on $X$ then we call $\eta$ a \textit{lift} of $\wh\gamma$.

For $\wh v\in SX$ let $\wh\gamma_{\wh v}$ denote the geodesic on $X$ determined by $\wh v$, that is,
\[
 \wh\gamma_{\wh v}'(0) = \wh v.
\]
The geodesic flow on $X$ is then given by
\[
 \R\times SX \to SX,\quad (t,\wh v) \mapsto \wh\gamma_{\wh v}'(t).
\]
Whereas all maximal geodesic arcs on $\h$ are semicircles or vertical lines, the maximal geodesic arcs on $X$ enjoy a larger variety of possible forms. E.\,g., they can be dense, or vanish into the cusp, or being dense in some unbounded subset, or being bounded, and so on. We are mostly interested in \textit{periodic geodesics}, that is, geodesics $\wh\gamma$ for which there exists $t_0>0$ such that 
\[
 \wh\gamma'(0) = \wh\gamma'(t_0)
\]
or, equivalently,
\[
 \wh\gamma(t) = \wh\gamma(t+t_0)
\]
for all $t\in\R$. Let $\PGeo(\Gamma)$ denote the set of periodic geodesics on $X$.

An element $g=\textbmat{a}{b}{c}{d}\in G$ is called \textit{parabolic} if $|a+b|=2$. A point in $P^1(\R)$ is called \textit{cuspidal} if it is fixed by a parabolic element in $\Gamma$. The cuspidal points are precisely the representatives in $P^1(\R)$ of the cusps of $X$. Thus, since the Hecke triangle surface $X$ has only one cusp and this cusp is represented by $\infty$, the set of cuspidal points of $\Gamma$ is just $\Gamma.\infty$. If $\gamma$ is a lift of a periodic geodesic on $X$ then neither of its two endpoints $\gamma(\pm\infty)$ is cuspidal.

\section{Relations between quadratic forms, hyperbolic matrices, and geodesics}\label{sec:relations}

The reduction theory developed in this article takes advantage of equivariant identifications between quadratic forms, hyperbolic matrices and unit speed geodesics, or more precisely, between slight modifications of these objects. In this section we expound these modifications and relations on the level of the hyperbolic plane $\h$ and the full group $G=\PSL_2(\R)$.

The rough idea is to consider a quadratic form $f\in\QF$ with zeros $\{x_a(f),x_r(f)\} = \{x_\pm(f)\}$, a hyperbolic matrix $g\in G_h$ and a geodesic $\gamma$ on $\h$ as being related if and only if 
\begin{equation}\label{endident}
 x_a(f) = w_a(g) = \gamma(+\infty) \quad\text{and}\quad x_r(f) = w_r(g) = \gamma(-\infty).
\end{equation}
Moreover, $G$ naturally acts on the set of geodesics. The $G$-actions on quadratic forms and hyperbolic matrices are then uniquely defined by requiring $G$-equivariance of these relations.

One immediately sees that the relations between quadratic forms, hyperbolic matrices and geodesics on $\h$ as induced by \eqref{endident} are not bijections since several quadratic forms have identical zeros, several hyperbolic matrices identical fixed points, and several geodesics identical endpoints. Moreover, these relations involve a non-canonical choice as to whether $x_a(f) = x_+(f)$ or $x_a(f) = x_-(f)$. 

To overcome the latter issue we endow each quadratic form with two possible \textit{orientations}, thereby introducing a double cover $\QF\times\{\pm 1\} \to \QF$. To resolve the former issue we take advantage of additional properties of the involved objects. For example, the action of the matrix $g\in G_h$ on any geodesic $\gamma$ satisfying \eqref{endident} has the effect of a time-shift in the parametrization of $\gamma$ by a fixed positive increment, the \textit{displacement}, independent of the choice of $\gamma$. Thus, we need to consider geodesics only up to parametrization but endow them in addition with a displacement factor. For the oriented quadratic forms we need to decend to a projective version by factoring out common signs of their coefficients.

Throughout let 
\[
 \FP \sceq P^1(\R)^2 \setminus \{\text{diagonal}\} 
\]
be endowed with the diagonal $G$-action. The elements of $\FP$ serve as pairs of fixed points for hyperbolic matrices, of endpoints for geodesics, and of zeros for quadratic forms.

\subsection{Hyperbolic matrices and geodesics}

Recall from Lemma~\ref{attr_rep} that for any $g\in G_h$, the point $w_a(g)$ from \eqref{def_wa1} respectively \eqref{def_w2} is its attracting fixed point, and $w_r(g)$ from \eqref{def_wr1} respectively \eqref{def_w3} is its repelling fixed point.

\begin{lemma}\label{fixedconj}
Let $g\in G_h$ and $h\in G$. Then 
\[ 
 h.w_a(g)=w_a(hgh^{-1}) \quad\text{and}\quad h.w_r(g)=w_r(hgh^{-1}).
\]
\end{lemma}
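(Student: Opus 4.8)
The plan is to exploit the characterization of attracting and repelling fixed points via the dynamical condition~\eqref{def_attracting} rather than wrestling with the explicit formulas \eqref{def_wa1}--\eqref{def_w3}. First I would observe that for $g\in G_h$, conjugation $hgh^{-1}$ is again hyperbolic: the trace is preserved, so $|a+d|>2$ is preserved; alternatively, $hgh^{-1}$ has exactly the two fixed points $h.w$ where $w$ ranges over the two fixed points of $g$, since $hgh^{-1}.(h.w) = h.(g.w) = h.w$, and a hyperbolic element has exactly two fixed points so there are no others. Thus $\{w_a(hgh^{-1}), w_r(hgh^{-1})\} = \{h.w_a(g), h.w_r(g)\}$, and it only remains to match attracting with attracting.

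For that matching step I would use the defining property of the attracting fixed point. By Lemma~\ref{attr_rep}, $w_a(g)$ is the attracting fixed point of $g$, so for any $z\in\h$ we have $\lim_{n\to\infty} g^n.z = w_a(g)$. Pick any $z\in\h$ and write $z = h.z'$ with $z' = h^{-1}.z\in\h$ (here we use that $h\in G$ maps $\h$ to $\h$). Then
\[
 (hgh^{-1})^n.z = h g^n h^{-1}.z = h.(g^n.z'),
\]
and since the action of $h$ on $\h\cup P^1(\R)$ is continuous (extending the Riemannian isometry action, as recorded in Section~\ref{sec:hyperbolic}), we get
\[
 \lim_{n\to\infty} (hgh^{-1})^n.z = h.\Big(\lim_{n\to\infty} g^n.z'\Big) = h.w_a(g).
\]
By \eqref{def_attracting} applied to the hyperbolic element $hgh^{-1}$, this limit is exactly $w_a(hgh^{-1})$. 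Hence $h.w_a(g) = w_a(hgh^{-1})$. The statement for the repelling fixed point follows verbatim by replacing $g^n$ with $g^{-n}$ and using that the repelling fixed point of $g$ is the attracting fixed point of $g^{-1}$, or equivalently by noting that the two fixed points of $hgh^{-1}$ are $\{h.w_a(g), h.w_r(g)\}$ and one of them has now been identified as the attracting one, so the other, $h.w_r(g)$, must be the repelling one $w_r(hgh^{-1})$.

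There is no serious obstacle here; the only point requiring a moment's care is the interchange of $h$ with the limit, which is justified purely by continuity of the boundary action, and the observation that $h^{-1}.z$ still lies in $\h$ so that \eqref{def_attracting} is applicable. One could alternatively give a computational proof by substituting the formulas \eqref{def_wa1}--\eqref{def_w3} and the transformation rule for $\lambda_\pm$ under conjugation, but the dynamical argument is cleaner and uniform in all the case distinctions ($c\neq 0$ versus $c=0$, sign of $a+d$, $|a|\gtrless 1$), so I would present that one.
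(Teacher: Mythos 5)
Your proof is correct and is essentially the paper's own argument: the paper likewise verifies $w_a(hgh^{-1}) = \lim_{n\to\infty}(hgh^{-1})^n.z = h.\bigl(\lim_{n\to\infty} g^n.(h^{-1}.z)\bigr) = h.w_a(g)$ using the dynamical characterization \eqref{def_attracting} via Lemma~\ref{attr_rep} and continuity of the boundary action, with the repelling case handled analogously. Your added preliminary remarks (conjugates of hyperbolic elements are hyperbolic, fixed points map to fixed points) are harmless elaborations of the same route.
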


\begin{proof}
Let $z\in\h$. Then
\begin{align*}
w_a(hgh^{-1}) & = \lim_{n\to\infty} (hgh^{-1})^n.z = \lim_{n\to\infty} hg^nh^{-1}.z = h.\big(\lim_{n\to\infty} g^n.(h^{-1}.z)\big)
\\
& = h.w_a(g), 
\end{align*}
where the first and the last equality is justified by Lemma~\ref{attr_rep}. 
\end{proof}

We define the left-action of $G$ on itself by conjugation, hence
\[
 h.g \sceq hgh^{-1}
\]
for all $h,g\in G$.  Then the map
\begin{equation}\label{phi1}
 \mapeins\colon G_h \to \FP,\ g\mapsto (w_a(g), w_r(g)),
\end{equation}
is $G$-equivariant.

\begin{prop}\label{displacement}
Let $\gamma\in \UGeo$. 
\begin{enumerate}[{\rm (i)}]
\item\label{displacegroup} Let $g\in G_h$ and suppose that 
\begin{equation}\label{sameend}
 w_a(g) = \gamma(+\infty) \quad\text{and}\quad w_r(g) = \gamma(-\infty).
\end{equation}
Then there exists $t_g>0$ such that $g.\gamma = \gamma(\cdot + t_g)$. 
\item\label{displaceindep} Let $\eta \in \UGeo$ be any geodesic with $\eta(\pm\infty) = \gamma(\pm\infty)$, and let $g\in G_h$ and $t_g>0$ be as in \eqref{displacegroup}. Then $g.\eta= \eta(\cdot + t_g)$. In other words, the value of $t_g$ does not depend on $\gamma$.
\item\label{displacevalue} Let $t\in \R_{>0}$. Then there exists a unique element $g\in G_h$ with \eqref{sameend} and $t_g=t$.
\item\label{displaceinv} For any $g\in G_h$ and $h\in G$ we have $t_{hgh^{-1}} = t_g$.
\end{enumerate}
\end{prop}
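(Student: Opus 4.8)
The plan is to reduce the general case to the standard geodesic $\gamma_\st$ by means of Proposition~\ref{twopoint}, exploiting throughout the $G$-equivariance of $\mapeins$ from \eqref{phi1} (equivalently Lemma~\ref{fixedconj}) and the simple transitivity of the $G$-action on $\UGeo$.

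For part~\eqref{displacegroup}, since $g$ fixes $\gamma(+\infty)$ and $\gamma(-\infty)$, the isometry $g$ maps the geodesic arc $(\gamma(-\infty),\gamma(+\infty))_\gamma$ to itself, hence $g.\gamma$ is a reparametrization of $\gamma$. As $g$ is orientation-preserving and, by \eqref{def_attracting}, pushes points towards the attracting fixed point $w_a(g)=\gamma(+\infty)$, this reparametrization must be an orientation-preserving time shift, i.e.\ $g.\gamma=\gamma(\cdot+t_g)$ for some $t_g\in\R$; the fact that $\gamma(t)\to\gamma(+\infty)$ as $t\to+\infty$ forces $t_g>0$ (a cleaner route: conjugate $g$ so that $\{w_a,w_r\}=\{0,\infty\}$ and compute directly that the associated hyperbolic matrix acts on $\gamma_\st$ as $t\mapsto t+t_g$ with $t_g=2\log\lambda_+(g)>0$). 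For part~\eqref{displaceindep}, given another $\eta\in\UGeo$ with the same endpoints, both $\gamma$ and $\eta$ have image the unique geodesic arc with those endpoints, so $\eta=\gamma(\cdot+s)$ for some $s\in\R$; then $g.\eta=g.\gamma(\cdot+s)=(g.\gamma)(\cdot+s)=\gamma(\cdot+t_g+s)=\eta(\cdot+t_g)$, using that the $G$-action on geodesics commutes with reparametrization. For part~\eqref{displacevalue}, uniqueness and existence: by Proposition~\ref{twopoint} pick $h\in G$ with $h.\gamma=\gamma_\st$; then the required $g$ corresponds bijectively (via $g\mapsto hgh^{-1}$) to a $g_0\in G_h$ fixing $0,\infty$ with $g_0.\gamma_\st=\gamma_\st(\cdot+t)$, and such $g_0$ is unique, namely $g_0=\bmat{e^{t/2}}{0}{0}{e^{-t/2}}$ (one checks $\bmat{e^{t/2}}{0}{0}{e^{-t/2}}.ie^r=ie^{r+t}$ and $t>0$ makes $\infty$ attracting, matching \eqref{def_w2}).

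For part~\eqref{displaceinv}, apply part~\eqref{displacegroup} to the geodesic $h.\gamma$: by Lemma~\ref{fixedconj}, $w_a(hgh^{-1})=h.w_a(g)=h.\gamma(+\infty)=(h.\gamma)(+\infty)$ and likewise $w_r(hgh^{-1})=(h.\gamma)(-\infty)$, so $hgh^{-1}$ satisfies \eqref{sameend} with respect to the geodesic $h.\gamma$, and its displacement $t_{hgh^{-1}}$ satisfies $(hgh^{-1}).(h.\gamma)=(h.\gamma)(\cdot+t_{hgh^{-1}})$. On the other hand $(hgh^{-1}).(h.\gamma)=h.(g.\gamma)=h.(\gamma(\cdot+t_g))=(h.\gamma)(\cdot+t_g)$. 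Comparing the two expressions and using that a reparametrization $\gamma_0(\cdot+a)=\gamma_0(\cdot+b)$ of a geodesic forces $a=b$ (geodesics are injective), we conclude $t_{hgh^{-1}}=t_g$.

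The bookkeeping is entirely routine; the one point that deserves genuine care is establishing \emph{positivity} of $t_g$ in part~\eqref{displacegroup}, i.e.\ that the displacement is in the direction of the attracting fixed point and not its opposite. This is where the definitions \eqref{def_wa1}--\eqref{def_w3} of $w_a$ and $w_r$, together with Lemma~\ref{attr_rep} and the characterization \eqref{def_attracting}, must be invoked rather than merely the fixed-point set; concretely, after conjugating to the normal form fixing $0$ and $\infty$ one reads off $t_g=2\log\lambda_+(g)$ and $\lambda_+(g)>1$ by \eqref{def_lambda} since $|a+d|>2$ for hyperbolic $g$. Everything else follows formally from Proposition~\ref{twopoint}, Lemma~\ref{fixedconj}, and the compatibility of the $G$-action with reparametrization of geodesics.
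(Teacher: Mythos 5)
Your proposal is correct and follows essentially the same route as the paper: explicit computation for the standard geodesic with the diagonal normal form $\textbmat{e^{t/2}}{0}{0}{e^{-t/2}}$, compatibility of the $G$-action with reparametrization for part (ii), and conjugation via Proposition~\ref{twopoint} to transfer everything to an arbitrary geodesic and to obtain part (iv). Your added care about the positivity of $t_g$ (identifying $t_g=2\log\lambda_+(g)>1$ in normal form) is exactly the point the paper's computation $g_a.\gamma_\st(t)=ie^{t+2\log a}$ with $a>1$ settles.
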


For $g\in G_h$, the value $t_g$ in Proposition~\ref{displacement} is called the \textit{displacement length} of $g$.

\begin{proof}[Proof of Proposition~\ref{displacement}]
We start by proving statements \eqref{displacegroup} and \eqref{displacevalue} for the standard geodesic 
\[
 \gamma_\st \colon \R \to \h,\quad t\mapsto ie^t.
\]
Since $\gamma_\st(+\infty) = \infty$ and $\gamma_\st(-\infty) = 0$, all elements $g\in G_h$ satisfying \eqref{sameend} are given by
\[
 g =  g_a = \bmat{a}{0}{0}{a^{-1}}
\]
with $a>1$. Since 
\[
 g_a.\gamma_\st(t) = ia^2 e^t = ie^{t+ 2\log a},
\]
we have $t_{g_a} = 2\log a > 0$. Moreover, for each given $t>0$ there is a unique $a>1$ such that $t= t_{g_a}$, namely $a = e^{t/2}$.

Now let $\eta\in \UGeo$ be a reparametrization of $\gamma_\st$. Hence, there exists $t_0\in\R$ such that $\eta = \gamma_\st(\cdot + t_0)$. Obviously, $g_a.\eta = \eta(\cdot + t_{g_a})$, which proves \eqref{displaceindep} for these cases.

Finally, let $\gamma\in \UGeo$ be arbitrary. By Proposition~\ref{twopoint} there exists a unique element $h\in G$ such that $h.\gamma = \gamma_\st$. Thus, if $g\in G_h$ satisfies \eqref{sameend} for $\gamma$, then $hgh^{-1}$ satisfies \eqref{sameend} for $\gamma_\st$. Hence the previous considerations for $\gamma_\st$ carry over and imply \eqref{displacegroup}-\eqref{displaceindep} for $\gamma$. In addition, this shows \eqref{displaceinv}. 
\end{proof}

We define $\gamma,\eta\in \UGeo$ to be \textit{equivalent} if $\eta$ is a reparametrization of $\gamma$. That is, there exists $t_0\in\R$ such that $\eta = \gamma(\cdot + t_0)$. We denote the equivalence class of $\gamma$ by $[\gamma]$, and the set of all equivalence classes by 
\[
 \UGeo/_\sim.
\]
Then $G$ acts on $\UGeo/_\sim$ by 
\[
 g.[\gamma] \sceq [g.\gamma]
\]
for all $g\in G$, $[\gamma] \in \UGeo/_\sim$.  The map
\[
  \mgf\colon \UGeo/_\sim \to \FP,\quad [\gamma] \mapsto (\gamma(+\infty), \gamma(-\infty)),
\]
is well-defined, a bijection and $G$-equivariant. Motivated by Proposition~\ref{displacement} we consider
\[
 \UGeo/_\sim \times \R_{>0} \cong \FP\times \R_{>0},
\]
where the action of $G$ is defined by restrictions to $\UGeo/_\sim$ and $\FP$.

\begin{thm}\label{mmg}
The map 
\[
 \mapdrei\colon G_h \to \FP\times\R_{>0},\ g\mapsto ( w_a(g), w_r(g), t_g )
\]
is a $G$-equivalent bijection.
\end{thm}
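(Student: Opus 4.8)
The plan is to assemble $\mapdrei$ from the two pieces that have already essentially been constructed: the $G$-equivariant map $\mapeins\colon G_h\to\FP$ from \eqref{phi1}, which records the ordered pair of fixed points, and the displacement-length assignment $g\mapsto t_g$ from Proposition~\ref{displacement}, whose $G$-invariance is statement~\eqref{displaceinv}. Thus $\mapdrei(g) = (\mapeins(g), t_g)$. Equivariance is immediate: for $h\in G$ we have $h.\mapeins(g) = \mapeins(hgh^{-1})$ by Lemma~\ref{fixedconj} (equivalently by the stated equivariance of $\mapeins$), and $t_{hgh^{-1}} = t_g$ by Proposition~\ref{displacement}\eqref{displaceinv}, so $\mapdrei(h.g) = \mapdrei(hgh^{-1}) = (h.\mapeins(g), t_g) = h.\mapdrei(g)$, using the definition of the $G$-action on $\FP\times\R_{>0}$ as acting on the first factor only. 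So the entire content of the theorem is the bijectivity of $\mapdrei$.

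For injectivity, suppose $\mapdrei(g) = \mapdrei(g')$, so $g$ and $g'$ share the same attracting fixed point $w_a$, the same repelling fixed point $w_r$, and the same displacement length $t$. Pick any geodesic $\gamma\in\UGeo$ with $\gamma(+\infty)=w_a$ and $\gamma(-\infty)=w_r$ (this exists by Proposition~\ref{twopoint} applied to move $\gamma_\st$ to the semicircle/line through $w_r,w_a$). Then condition \eqref{sameend} holds for both $g$ and $g'$ with this $\gamma$, and $g.\gamma = \gamma(\cdot+t) = g'.\gamma$. Hence $g^{-1}g'$ fixes the geodesic $\gamma$ pointwise, and since the $G$-action on $\UGeo$ is simple transitive (Proposition~\ref{twopoint}), $g^{-1}g' = \id$, i.e. $g = g'$. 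For surjectivity, let $(w_a,w_r,t)\in\FP\times\R_{>0}$ be given. Again choose $\gamma\in\UGeo$ with $\gamma(\pm\infty)=w_a$ resp.\ $w_r$. By Proposition~\ref{displacement}\eqref{displacevalue} there is a (unique) $g\in G_h$ satisfying \eqref{sameend} for $\gamma$ with $t_g = t$; then $w_a(g)=\gamma(+\infty)=w_a$ and $w_r(g)=\gamma(-\infty)=w_r$ by \eqref{sameend}, so $\mapdrei(g) = (w_a,w_r,t)$.

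There is no real obstacle here; the substance has been front-loaded into Proposition~\ref{twopoint} and Proposition~\ref{displacement}. The one point that deserves a word of care is the very first step of both the injectivity and surjectivity arguments, namely that for any off-diagonal pair $(w_a,w_r)\in\FP$ there genuinely is a geodesic in $\UGeo$ with those ordered endpoints — this is exactly the parenthetical remark after Proposition~\ref{twopoint} that the images of geodesics are precisely the semicircles orthogonal to $\R$ together with the vertical lines, so any two distinct boundary points are joined by such an arc, and it can be oriented either way. A second minor point is to confirm that Proposition~\ref{displacement}\eqref{displaceindep} is what guarantees $t_g$ in the surjectivity step is independent of the auxiliary choice of $\gamma$, so that the $g$ produced really has $\mapdrei$-image equal to the prescribed triple regardless of which representative geodesic was chosen; but since we only need existence of one such $g$, even this is not strictly necessary for surjectivity. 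I would therefore present the proof as: (1) $\mapdrei = (\mapeins, t_\bullet)$ is well-defined and $G$-equivariant by Lemma~\ref{fixedconj} and Proposition~\ref{displacement}\eqref{displaceinv}; (2) injectivity via simple transitivity on $\UGeo$; (3) surjectivity via Proposition~\ref{displacement}\eqref{displacevalue}.
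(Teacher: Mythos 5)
Your proof is correct and follows exactly the route the paper intends: the paper's own proof is the single line ``This follows immediately from Proposition~\ref{displacement}'', and your argument simply writes out the details (equivariance from Lemma~\ref{fixedconj} and Proposition~\ref{displacement}\eqref{displaceinv}, surjectivity and injectivity from Proposition~\ref{displacement}\eqref{displacevalue} after choosing a geodesic with the prescribed ordered endpoints). The only stylistic difference is that for injectivity you re-derive the uniqueness via simple transitivity on $\UGeo$ rather than quoting the uniqueness clause of Proposition~\ref{displacement}\eqref{displacevalue} directly; both are fine.
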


\begin{proof}
This follows immediately from Proposition~\ref{displacement}.
\end{proof}

\subsection{Quadratic forms}
Let 
\[
 \QF\times\{\pm1\} \to \QF,\quad (f,s) \mapsto f,
\]
be the trivial double cover of $\QF$ (endowed with the discrete topology). For $(f,s)\in\QF\times\{\pm 1\}$ we call
\[
 x_a\big( (f,s) \big) \sceq
\begin{cases}
x_+(f) & \text{if $s=+1$}
\\
x_-(f) & \text{if $s=-1$} 
\end{cases}
\]
the \textit{attracting zero} of $(f,s)$, and 
\[
 x_r\big( (f,s) \big)\sceq
\begin{cases}
x_-(f) & \text{if $s=+1$}
\\
x_+(f) & \text{if $s=-1$}
\end{cases}
\]
the \textit{repelling zero}. The value of $s$ should be understood as a choice of \textit{orientation} for the quadratic form $f$.  Let
\[ 
 \OQF \sceq \OQF(\R) \sceq \QF(\R) \times\{\pm 1\}
\]
denote the set of \textit{oriented quadratic forms} over $\R$. 

We define an equivalence relation, denoted by $\sim$, on $\OQF$ by identifying $(f,s)$ with $(-f,-s)$ for all $(f,s)\in \OQF$. For each $(f,s)$ let 
\[
[f,s] \sceq \{ (f,s) , (-f,-s) \}
\]
denote its equivalence class, which we call a \textit{projective oriented quadratic form} over $\R$. Further, let
\[
 \POQF \sceq \POQF(\R) \sceq \OQF(\R)/_{\sim}
\]
denote the set of all equivalence classes, that is the set of projective oriented quadratic forms. 

For $[f,s]\in \POQF$ we call
\[
 x_a\big([f,s]\big) \sceq x_a\big( (f,s) \big) \quad\text{resp.}\quad x_r\big( [f,s] \big) \sceq x_r\big( (f,s) \big)
\]
the \textit{attracting} resp.\@ \textit{repelling zero} of $[f,s]$. Since $x_+(f) = x_-(-f)$, these notions are indeed well-defined.

To simplify notations, for $f=[A,B,C]$ and $s=+1$  we denote $[f,s]$ by
\[
 [A,B,C,s] \quad\text{or}\quad [A,B,C,+],
\]
and analogously if $s=-1$.

We define an action of $G$ on $\QF$ by 
\[
 g.f(x,y) = f\left(\left( g^{-1}{x\choose y} \right)^\top\right) = f(dx-by, -cx + ay)
\]
for all $g = \textbmat{a}{b}{c}{d}\in G$, $f\in\QF$. 
If $f=[A,B,C]$ then
\[
 g.f = [ Ad^2 - Bcd +Cc^2, -2Abd + B(ad+bc) - 2Cac, Ab^2 -Bab + Ca^2 ],
\]
which immediately shows well-definedness. The action of $G$ on $\QF$ extends to $\OQF$ by 
\[
 g.(f,s) \sceq (g.f, s)
\]
for each $g\in G$, $(f,s)\in \OQF$. Moreover, it then decends to $\POQF$ by 
\[
 g.[f,s] \sceq [g.f, s]
\]
for all $g\in G$, $[f,s]\in\POQF$.  Then the map
\begin{equation}\label{phi2}
 \mapzwei\colon \POQF \to \FP,\quad [f,s] \mapsto \big( x_a([f,s]), x_r([f,s]) \big),
\end{equation}
is $G$-equivariant.

\begin{thm}\label{mqfm}
The map 
\[
\mapvier\colon \POQF\to G_h,\ [A, B, C, s] \mapsto \bmat{\frac{-B+s\sqrt{D+4}}{2}}{-C}{A}{\frac{B+s\sqrt{D+4}}{2}}
\]
is a $G$-equivariant bijection with
\[
 \mapeins\circ\mapvier = \mapzwei.
\]
Its inverse is given by
\[
 \mapvier^{-1}\colon G_h \to \POQF,\quad \bmat{a}{b}{c}{d} \mapsto [c, d-a, -b, \sgn(a+d)].
\]
\end{thm}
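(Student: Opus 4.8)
The plan is to verify directly that the matrix assigned to $[A,B,C,s]$ is hyperbolic, that $\mapvier$ and the claimed inverse are mutually inverse, and that $\mapeins\circ\mapvier=\mapzwei$; the $G$-equivariance of $\mapvier$ will then follow formally since it is a composite of $G$-equivariant maps (or from the explicit formulas). First I would set $g\sceq\textbmat{\frac{-B+s\sqrt{D+4}}{2}}{-C}{A}{\frac{B+s\sqrt{D+4}}{2}}$ and compute its trace and determinant. The trace is $a+d = s\sqrt{D+4}$, so $(a+d)^2 = D+4 > 4$ since $D>0$; hence $|a+d|>2$ and $g\in G_h$. The determinant is $ad-bc = \frac{(s\sqrt{D+4})^2-B^2}{4}+CA = \frac{D+4-B^2}{4}+CA = \frac{4-(B^2-4AC)+4AC}{4}+CA$; using $D=B^2-4AC$ one gets $ad-bc = \frac{4}{4}=1$, so $g$ indeed represents an element of $\PSL_2(\R)$ (and the sign ambiguity of $s\sqrt{D+4}$ under $[f,s]\sim[-f,-s]$ is exactly the sign ambiguity of the matrix representative, so $g$ is well defined on $\POQF$). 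Note $\sgn(a+d)=\sgn(s\sqrt{D+4})=s$, which makes the formula for $\mapvier^{-1}$ plausible.

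Next I would check that the two displayed formulas are inverse to each other. In one direction, apply $\mapvier^{-1}$ to $g$ above: its $(2,1)$-entry is $A$, its $(2,2)-(1,1)$-entry is $\frac{B+s\sqrt{D+4}}{2}-\frac{-B+s\sqrt{D+4}}{2}=B$, its negated $(1,2)$-entry is $-(-C)=C$, and the sign is $\sgn(a+d)=s$; so $\mapvier^{-1}(\mapvier([A,B,C,s]))=[A,B,C,s]$. In the other direction, start with $h=\textbmat{a}{b}{c}{d}\in G_h$, so $|a+d|>2$ and $ad-bc=1$. Then $\mapvier^{-1}(h)=[c,\,d-a,\,-b,\,\sgn(a+d)]$, for which the discriminant is $D=(d-a)^2-4c(-b)=(d-a)^2+4bc=(a+d)^2-4(ad-bc)=(a+d)^2-4$, hence $D+4=(a+d)^2$ and $\sqrt{D+4}=|a+d|$, so $s\sqrt{D+4}=\sgn(a+d)|a+d|=a+d$. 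Substituting into $\mapvier$ gives the matrix $\textbmat{\frac{-(d-a)+(a+d)}{2}}{-(-b)}{c}{\frac{(d-a)+(a+d)}{2}}=\textbmat{a}{b}{c}{d}=h$ in $\PSL_2(\R)$. This shows $\mapvier$ is a bijection with the stated inverse.

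It remains to prove $\mapeins\circ\mapvier=\mapzwei$. Both sides are maps $\POQF\to\FP$, and by Theorem~\ref{mmg} (bijectivity of $\mapdrei$, in particular of its $\FP$-component $\mapeins$ on conjugacy data) it suffices to compare the attracting and repelling points. For $\mapvier([A,B,C,s])=g$ with $c=A$: if $A\neq 0$, plug $a=\frac{-B+s\sqrt{D+4}}{2}$, $d=\frac{B+s\sqrt{D+4}}{2}$, $\sgn(a+d)=s$, and $\lambda_\pm(g)=\frac{|a+d|\pm\sqrt{(a+d)^2-4}}{2}=\frac{\sqrt{D+4}\pm\sqrt{D}}{2}$ into \eqref{def_wa1}: a short computation gives $w_a(g)=\frac{\lambda_+(g)-s\,d}{s\,A}=\frac{1}{2A}\big(s^{-1}\sqrt{D+4}+\sqrt{D}\,s^{-1}\cdot s^{?}...\big)$, which simplifies (using $s^2=1$) to $\frac{-B+\sqrt D}{2A}=x_+(f)=x_a([f,s])$ when $s=+1$ and to $x_-(f)=x_a([f,s])$ when $s=-1$; similarly $w_r(g)=x_r([f,s])$. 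The degenerate case $A=0$ uses \eqref{def_w2}--\eqref{def_w3} together with the convention for $x_\pm$ when $A=0$, checking the sign of $B$ against $|a|=|{-B/2}\cdot(\ldots)|$ — this is the only place one must be careful. I expect this last verification, matching the case conventions at $A=0$ (equivalently $c=0$), to be the main obstacle; everything else is bookkeeping with $\sqrt{D+4}$ and the sign $s$. Once $\mapeins\circ\mapvier=\mapzwei$ is known, $G$-equivariance of $\mapvier$ follows: $\mapeins$ is $G$-equivariant and injective enough on the relevant level (or, cleanly, $\mapdrei=(\mapeins,t)$ is a $G$-equivariant bijection by Theorem~\ref{mmg}, and one checks $t_{\mapvier([A,B,C,s])}$ transforms correctly), so $\mapvier=\mapeins^{-1}\circ\mapzwei$ up to the displacement coordinate is $G$-equivariant; alternatively one verifies $g.\mapvier([f,s])=\mapvier(g.[f,s])$ directly from the matrix formula for $g.f$.
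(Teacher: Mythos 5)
Your proposal is correct and follows essentially the same route as the paper, whose entire proof consists of the remark that well-definedness and mutual inverseness are ``immediately checked'' and the remaining statements follow from ``straightforward calculations''; you simply carry those calculations out (trace $a+d=s\sqrt{D+4}$, determinant $1$, the two compositions, and the matching of $w_{a/r}$ with $x_{a/r}$ via $\lambda_\pm(g)=\tfrac{\sqrt{D+4}\pm\sqrt{D}}{2}$). The one piece you flag but leave unfinished, the degenerate case $A=0$ (i.e.\ $c=0$) against the conventions \eqref{def_w2}--\eqref{def_w3}, is not treated explicitly in the paper either, so your write-up is if anything more detailed than the original.
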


\begin{proof}
One immediately checks that these maps are well-defined and bijective to each other. The remaining statements are then proven by straightforward calculations.
\end{proof}

\subsection{Summary of maps}\label{summaps}

We have the commutative diagram 
\[
\xymatrix{
\POQF \ar[rr]^{\mapvier} \ar[dd]_{\mapzwei} && G_h  \ar[ddll]_{\mapeins} \ar[dd]^{\mapdrei}
\\\\
 \FP && \FP\times\R_{>0} \ar[ll]_{\pr_1}
\\
\UGeo/_\sim \ar[u]^\mgf && \UGeo/_\sim\times\R_{>0}  \ar[u]_{\mgf\times\{\id\}},
}
\]
where all maps are $G$-equivariant,  $\mapdrei$, $\mapvier$, $\mgf$ and $\mgf\times\{\id\}$ are bijections, and all maps are surjective.

\section{Quadratic forms associated to Hecke triangle groups, periodic geodesics, and identifications}\label{sec:rel_hecke}

Recall the map $\mapvier\colon \POQF\to G_h$ from Theorem~\ref{mqfm} and let $\Gamma = \Gamma_q$ be a Hecke triangle group. Let
\[
\Gamma_h \sceq \Gamma\cap G_h
\]
denote the set of its hyperbolic elements. Then 
\[
 \POQF(\Gamma_h) \sceq \mapvier^{-1}(\Gamma_h)
\]
is the set of \textit{projective oriented quadratic forms associated to $\Gamma$}, for short, \textit{$\Gamma$-forms}. 

The proof of the reduction theory relies crucially on the relations between periodic geodesics on $X=\Gamma\backslash\h$, $\Gamma$-forms and the $\Gamma$-equivalence classes of hyperbolic matrices in $\Gamma$. In this section we specialize and extend the identifications from Section~\ref{sec:relations} accordingly. 

The operations in the algorithm of the reduction theory are performed on the pairs of zeros of $\Gamma$-forms respectively on the endpoints of lifts of periodic geodesics on $X$ to $\h$ (see Section~\ref{sec:redtheory} below). Even when restricted to $\Gamma_h$ respectively to $\Gamma$-forms, the maps $\mapeins\colon G_h\to\FP$ (see \eqref{phi1}) and $\mapzwei\colon \POQF\to\FP$ (see \eqref{phi2}) are not bijections since positive powers of any hyperbolic element have identical fixed points. To turn these into $\Gamma$-equivariant bijections we use a natural notion of level/multiplicity for hyperbolic elements, $\Gamma$-forms and periodic geodesics.

For each hyperbolic element $g\in \Gamma_h$ there exists a maximal integer $n=n(g)\in\N$ such that 
\[
 g = h^n
\]
for some (necessarily hyperbolic) $h\in\Gamma$. The element $h$ is unique. We call $n(g)$ the \textit{level} of $g$. If $n(g) = 1$ then we call $g$ \textit{primitive}. Let $\Gamma_p$ denote the set of primitive hyperbolic elements in $\Gamma$. Further, for $n\in\N$, let
\begin{align*}
\Gamma_{h,n} &\sceq \{ g\in\Gamma_h \mid n(g) = n \} = \{ h^n \mid h \in \Gamma_{p}\}.
\end{align*}
We call the elements of
\[
 \POQF(\Gamma_{h,n}) \sceq \mapvier^{-1}(\Gamma_{h,n})
\]
\textit{$\Gamma$-forms of level $n$}.

Let 
\[
 \FP(\Gamma_h) \sceq \{ (w_a(g), w_r(g) ) \mid g\in\Gamma_h\}
\]
denote the set of fixed point pairs of the elements in $\Gamma_h$. Recall from Proposition~\ref{displacement} that $t_g$ denotes the displacement length of $g\in \Gamma_h$. Proposition~\ref{displacement}\eqref{displaceindep} immediately implies that 
\[
 t_{g^n} = n t_g
\]
for all $n\in\N$. For $n\in\N$ let
\[
 \FP(\Gamma_{h,n}) \sceq \{(w_a(g), w_r(g), t_g) \mid g\in\Gamma_{h,n}\}.
\]

\begin{lemma}
Let $n\in\N$. 
\begin{enumerate}[{\rm (i)}]
\item The $G$-actions on $G$, $\POQF$ and $\FP$ induce $\Gamma$-actions on $\Gamma_{h,n}$, $\POQF(\Gamma_{h,n})$, $\FP(\Gamma_{h,n})$ and $\FP(\Gamma_h)$. 
\item The restricted maps
\begin{align*}
\mapdrein & \sceq \mapdrei\vert_{\Gamma_{h,n}}\colon \Gamma_{h,n} \to \FP(\Gamma_{h,n}),
\\
\mapviern & \sceq \mapvier\vert_{\POQF(\Gamma_{h,n})}\colon \POQF(\Gamma_{h,n}) \to \Gamma_{h,n},
\\
\mapeinsn & \sceq \mapeins\vert_{\Gamma_{h,n}}\colon \Gamma_{h,n} \to \FP(\Gamma_h),
\\
\mapzwein & \sceq \mapzwei\vert_{\POQF(\Gamma_{h,n})}\colon \POQF(\Gamma_{h,n}) \to \FP(\Gamma_h),
\intertext{and}
\pr_{1,n}&\colon \FP(\Gamma_{h,n}) \to \FP(\Gamma_h)
\end{align*}
are $\Gamma$-equivariant bijections.
\end{enumerate}
\end{lemma}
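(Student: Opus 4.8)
The plan is to build everything on the results already established in Section~\ref{sec:relations}, most notably Theorem~\ref{mmg} and Theorem~\ref{mqfm}, together with the elementary properties of the level function $n(\cdot)$. First I would verify part~(i): since $\Gamma \subseteq G$ and the $G$-actions on $G$ (by conjugation), on $\POQF$ and on $\FP$ restrict to $\Gamma$-actions, the only thing to check is that the four subsets $\Gamma_{h,n}$, $\POQF(\Gamma_{h,n})$, $\FP(\Gamma_{h,n})$ and $\FP(\Gamma_h)$ are $\Gamma$-invariant. For $\Gamma_{h,n}$ this is immediate from the fact that conjugation by an element of $\Gamma$ is an automorphism of $\Gamma$ that preserves hyperbolicity and the $n$-th power relation, hence preserves the level; equivalently, $n(hgh^{-1}) = n(g)$ for all $h\in\Gamma$, $g\in\Gamma_h$. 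For $\FP(\Gamma_h)$ one uses Lemma~\ref{fixedconj}: $h.(w_a(g),w_r(g)) = (w_a(hgh^{-1}),w_r(hgh^{-1}))$, which again lies in $\FP(\Gamma_h)$. For $\FP(\Gamma_{h,n})$ one combines Lemma~\ref{fixedconj} with Proposition~\ref{displacement}\eqref{displaceinv} ($t_{hgh^{-1}} = t_g$) and the level-invariance just noted. For $\POQF(\Gamma_{h,n}) = \mapvier^{-1}(\Gamma_{h,n})$, $\Gamma$-invariance follows because $\mapvier$ is $G$-equivariant (Theorem~\ref{mqfm}) and $\Gamma_{h,n}$ is $\Gamma$-invariant.

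For part~(ii), I would treat the five maps in turn, in each case checking (a) well-definedness of the claimed codomain, (b) $\Gamma$-equivariance, (c) bijectivity, where (b) and (c) are almost always inherited from the already-proven global statements by restriction. The map $\mapdrein = \mapdrei|_{\Gamma_{h,n}}$: by Theorem~\ref{mmg}, $\mapdrei\colon G_h\to\FP\times\R_{>0}$ is a $G$-equivariant bijection; restricting the domain to the $\Gamma$-invariant subset $\Gamma_{h,n}$ gives a $\Gamma$-equivariant injection whose image is by definition exactly $\FP(\Gamma_{h,n})$, so it is a bijection onto its codomain. Similarly $\mapviern = \mapvier|_{\POQF(\Gamma_{h,n})}$ is a restriction of the $G$-equivariant bijection $\mapvier\colon\POQF\to G_h$ to the subset $\POQF(\Gamma_{h,n}) = \mapvier^{-1}(\Gamma_{h,n})$, hence is a $\Gamma$-equivariant bijection onto $\Gamma_{h,n}$. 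For $\pr_{1,n}\colon\FP(\Gamma_{h,n})\to\FP(\Gamma_h)$, surjectivity is clear from the definitions of the two sets, and injectivity follows because if two elements $g_1,g_2\in\Gamma_{h,n}$ have the same fixed point pair then they generate the same cyclic group (both are the $n$-th power of the unique primitive element sharing those fixed points, namely the primitive hyperbolic element of $\Gamma$ with that attracting/repelling pair), forcing $g_1=g_2$ and hence $t_{g_1}=t_{g_2}$; $\Gamma$-equivariance is immediate since $\pr_1\colon\FP\times\R_{>0}\to\FP$ is $G$-equivariant.

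For $\mapeinsn = \mapeins|_{\Gamma_{h,n}}\colon\Gamma_{h,n}\to\FP(\Gamma_h)$ and $\mapzwein = \mapzwei|_{\POQF(\Gamma_{h,n})}\colon\POQF(\Gamma_{h,n})\to\FP(\Gamma_h)$, the codomain is $\FP(\Gamma_h)$ (not $\FP(\Gamma_{h,n})$) precisely because these maps forget the displacement data, and $\Gamma$-equivariance is again inherited from the global $\mapeins,\mapzwei$. Here the key observation for bijectivity is the factorization $\mapeinsn = \pr_{1,n}\circ\mapdrein$: since both factors have just been shown to be $\Gamma$-equivariant bijections, so is $\mapeinsn$. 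Likewise, by the commutative diagram in Section~\ref{summaps} (specifically $\mapeins\circ\mapvier = \mapzwei$), one gets $\mapzwein = \mapeinsn\circ\mapviern$ as a composition of $\Gamma$-equivariant bijections, hence itself a $\Gamma$-equivariant bijection. I expect the main (though still minor) obstacle to be the injectivity of $\pr_{1,n}$, i.e.\ the statement that a hyperbolic element of $\Gamma$ of prescribed level is determined by its fixed point pair: this rests on the fact that the stabilizer in $\Gamma$ of an ordered pair of fixed points of a hyperbolic element is infinite cyclic (a standard consequence of discreteness of $\Gamma$), so that within each level the element is unique; once this is in hand, the rest of the proof is a formal bookkeeping exercise with restrictions and compositions of the maps from Section~\ref{sec:relations}.
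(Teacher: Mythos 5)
Your proposal is correct and follows essentially the same route as the paper: invariance of the subsets via stability of the level under conjugation, bijectivity of $\mapdrein$ and $\mapviern$ by restriction of the global bijections, and the key injectivity step via discreteness of $\Gamma$ forcing the pointwise stabilizer of a fixed-point pair to be infinite cyclic with a primitive generator. The only (cosmetic) difference is that the paper attaches that injectivity argument directly to $\mapeinsn$ and deduces $\pr_{1,n}$ and $\mapzwein$ from it, whereas you attach it to $\pr_{1,n}$ and obtain the other two by composition.
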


\begin{proof}
Theorems~\ref{mqfm} and \ref{mmg} and the definitions of $\Gamma_{h,n}, \POQF(\Gamma_{h,n})$ and $\FP(\Gamma_{h,n})$ imply immediately the bijectivity of $\mapdrein$ and $\mapviern$. Further, the level of hyperbolic elements in $\Gamma$ is stable under conjugation with elements in $\Gamma$. This fact and the $G$-equivariance of $\mapdrei,\mapvier,\mapeins,\mapzwei,\pr_1$ (see Section~\ref{summaps}) yield the claimed $\Gamma$-actions (in particular, their well-definedness) and the $\Gamma$-equivariance of the maps $\mapdrein, \mapviern, \mapeinsn, \mapzwein$ and $\pr_{1,n}$. 

It remains to show that $\mapeinsn$ is bijective. From this, bijectivity of $\mapzwein$ and $\pr_{1,n}$ follows immediately. Let $g_1,g_2\in\Gamma_{h,n}$ such that 
\[
 \mapeins(g_1) = \mapeins(g_2).
\]
Let $(x,y)\sceq\mapeins(g_1)$ and consider 
\[
 \mc S\sceq \{ h\in\Gamma \mid h.x = x,\ h.y = y \}
\]
Then $\mc S$ obviously contains $g_1$ and $g_2$. Since $\Gamma$ is discrete, $\mc S$ is cyclic. Thus, 
\[
\mc S = \langle h_0\rangle = \langle h_0^{-1}\rangle 
\]
for some $h_0\in\Gamma$, which is necessarily primitive hyperbolic. Thus, without loss of generality,
\[
 g_1 = h_0^n = g_2.
\]
This shows the injectivity of $\mapeinsn$; the surjectivity is obvious.
\end{proof}

%

Recall from Section~\ref{sec:hecketriangle} that $\PGeo(\Gamma)$ denotes the set of periodic geodesics on $X$. For $\wh\gamma\in\PGeo(\Gamma)$ let $t_{\wh\gamma}$ denote the minimal period length of $\wh\gamma$. For $n\in\N$ let 
\[
 \PGeo(\Gamma)_n \sceq \{ (\wh\gamma, nt_{\wh\gamma}) \mid \wh\gamma\in\PGeo(\Gamma)\}
\]
denote the set of periodic geodesics on $X=\Gamma\backslash\h$ where the period length is considered with multiplicity $n$. Recall that we consider geodesics on $\h$ to be equivalent if they coincide after a reparametrization. We use the same notion of equivalence for geodesics on $X$ as well. We denote by
\[
 \PGeo(\Gamma)/_\sim 
\]
the set of equivalence classes of geodesics in $\PGeo(\Gamma)$, and by $[\wh\gamma]$  the equivalence class of $\wh\gamma\in\PGeo(\Gamma)$. Analogously, we let 
\[
\PGeo(\Gamma)_n/_\sim
\]
be the set of equivalence classes of geodesics in $\PGeo(\Gamma)_n$, and we use $[(\wh\gamma, nt_{\wh\gamma})]$ to denote the equivalence class of $(\wh\gamma, nt_{\wh\gamma})\in \PGeo(\Gamma)_n$.

Obviously, for each $n\in\N$, 
\begin{equation}\label{leveliso}
 \PGeo(\Gamma) \cong \PGeo(\Gamma)_n \quad\text{and}\quad  \PGeo(\Gamma)/_\sim \cong \PGeo(\Gamma)_n/_\sim.
\end{equation}
Further, the map
\begin{equation}\label{specialproj}
 \alpha\colon\FP(\Gamma_h) \to \PGeo(\Gamma)/_\sim,\quad \big( \gamma(+\infty), \gamma(-\infty) \big) \to [\wh\gamma],
\end{equation}
where $\gamma$ is any geodesic on $\h$ which represents $\wh\gamma$, is constant on $\Gamma$-orbits in $\FP(\Gamma_h)$.

The following proposition is well-known.

\begin{prop}\label{allisos}
Let $n\in\N$. Then $\PGeo(\Gamma)_n/_\sim$, the set of $\Gamma$-equivalence classes in $\Gamma_{h,n}$, and the set of $\Gamma$-equivalence classes of $\POQF(\Gamma_{h,n})$ are bijective. The bijections are induced by the maps
\[
 \pr_{1,n}\circ\mapviern\colon \POQF(\Gamma_{h,n}) \to \FP(\Gamma_h) \quad\text{and}\quad  \pr_{1,n}\circ\mapdrein\colon \Gamma_{h,n} \to \FP(\Gamma_h),
\]
any section of the map $\alpha$ in \eqref{specialproj}, and the natural bijections in \eqref{leveliso}.
\end{prop}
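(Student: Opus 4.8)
The plan is to reduce all three sets to a single set of $\Gamma$-orbits, namely $\Gamma\backslash\FP(\Gamma_h)$, and to read off every bijection from $\Gamma$-equivariant bijections that are already at hand. By the lemma immediately preceding this proposition, the maps $\mapeinsn = \pr_{1,n}\circ\mapdrein\colon\Gamma_{h,n}\to\FP(\Gamma_h)$ and $\mapzwein = \pr_{1,n}\circ\mapdrein\circ\mapviern\colon\POQF(\Gamma_{h,n})\to\FP(\Gamma_h)$ are $\Gamma$-equivariant bijections; hence they descend to bijections between $\Gamma\backslash\Gamma_{h,n}$, respectively $\Gamma\backslash\POQF(\Gamma_{h,n})$, and $\Gamma\backslash\FP(\Gamma_h)$. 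So the only thing left is to construct a bijection $\Gamma\backslash\FP(\Gamma_h)\to\PGeo(\Gamma)_n/_\sim$, and by the natural identification $\PGeo(\Gamma)/_\sim\cong\PGeo(\Gamma)_n/_\sim$ of \eqref{leveliso} it suffices to construct one onto $\PGeo(\Gamma)/_\sim$.

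First I would establish the dictionary between periodic geodesics and hyperbolic fixed points: a geodesic $\gamma\in\UGeo$ descends to a periodic geodesic $\wh\gamma = \pi(\gamma)$ on $X$ if and only if $\mgf([\gamma]) = (\gamma(+\infty),\gamma(-\infty))$ lies in $\FP(\Gamma_h)$. In one direction, if $(\gamma(+\infty),\gamma(-\infty)) = (w_a(g),w_r(g))$ for some $g\in\Gamma_h$, then Proposition~\ref{displacement} gives $g.\gamma = \gamma(\cdot + t_g)$ with $t_g>0$, so $\wh\gamma = \pi\circ\gamma = \pi\circ(g.\gamma) = \wh\gamma(\cdot + t_g)$ is periodic. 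In the other direction, if $\wh\gamma$ has minimal period $t_0>0$ and $\gamma$ is a lift, then $\pi(\gamma(t+t_0)) = \pi(\gamma(t))$ for all $t$; continuity and discreteness of $\Gamma$ then force a single $h\in\Gamma$ with $\gamma(\cdot + t_0) = h.\gamma$, and this $h$ fixes both endpoints $\gamma(\pm\infty)$ while translating $\gamma$ by $t_0\neq 0$, hence is hyperbolic with $(w_a(h),w_r(h)) = (\gamma(+\infty),\gamma(-\infty))$ by Lemma~\ref{attr_rep}. Therefore the $G$-equivariant bijection $\mgf$ restricts to a $\Gamma$-equivariant bijection from the set of equivalence classes of lifts of periodic geodesics onto $\FP(\Gamma_h)$.

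Next I would pass to $\Gamma$-orbits. Two lifts of periodic geodesics project to the same geodesic of $X$ up to reparametrisation exactly when they lie in one $\Gamma$-orbit (the same continuity-and-discreteness argument), and every $\wh\gamma\in\PGeo(\Gamma)$ admits a lift; hence $\PGeo(\Gamma)/_\sim$ is in canonical bijection with $\Gamma\backslash\FP(\Gamma_h)$. By construction this is precisely the bijection induced by the map $\alpha$ of \eqref{specialproj}, with inverse induced by any section of $\alpha$. Chaining it with the identification \eqref{leveliso} and with the bijections on $\Gamma$-quotients induced by $\mapeinsn$ and $\mapzwein$ from the first paragraph produces the three asserted bijections, realised exactly by the maps named in the statement.

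I do not expect a serious obstacle: the result is classical, and all the structural input — the explicit $\Gamma$-equivariant bijections $\mapdrein$, $\mapviern$, $\mapeinsn$, $\mapzwein$, $\pr_{1,n}$ and the displacement-length statement of Proposition~\ref{displacement} — is already in place. The one step that needs genuine (if routine) care is the periodic-geodesic dictionary above: one must use the discreteness of $\Gamma$ together with continuity to see that the translation element $h$ along a lift exists, is independent of the chosen base point, and is hyperbolic, and hence that $\FP(\Gamma_h)$ is exactly the set of endpoint pairs of lifts of periodic geodesics.
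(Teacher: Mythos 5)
Your proof is correct. The paper itself gives no argument here --- it states Proposition~\ref{allisos} as ``well-known'' --- so there is nothing to compare against; your write-up is the standard argument and correctly assembles exactly the ingredients the paper has already put in place (the $\Gamma$-equivariant bijections $\mapeinsn$, $\mapzwein$ of the preceding lemma, Proposition~\ref{displacement}, and the map $\alpha$). The only step deserving a word of polish is the existence and uniqueness of the translation element $h$ along a lift of a periodic geodesic: rather than appealing loosely to ``continuity and discreteness,'' it is cleanest to note that $\wh\gamma'(t_0)=\wh\gamma'(0)$ in $SX$ gives $\gamma'(t_0)=h.\gamma'(0)$ for a unique $h\in\Gamma$ because $G$ acts simply transitively on $S\h$, whence $h.\gamma=\gamma(\cdot+t_0)$ by uniqueness of geodesics; the rest of your dictionary then goes through as written.
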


\section{The reduction theory}\label{sec:redtheory}

In this section we propose a reduction theory for $\Gamma$-forms. To that end we define a notion of reducedness such that each $\Gamma$-equivalence class of $\Gamma$-forms contains finitely many and at least one reduced form. We present an algorithm---the reduction algorithm---which, for any given $\Gamma$-form, finds all equivalent reduced forms. More precisely, depending on the given $\Gamma$-form, the algorithm might first produce finitely many non-reduced equivalent forms, the `preperiod'. As soon as it has found one reduced equivalent form it continues to produce other reduced equivalent forms until it reproduces this first reduced form. In this way, the equivalent reduced forms are ordered in a cycle, the `period'. Finally, we show that there are only finitely many reduced $\Gamma$-forms for any fixed discriminant.

We call $F\in \POQF(\Gamma_h)$ \textit{reduced} if 
\begin{equation}\label{defiformred}
 x_a(F) > 0 > x_r(F).
\end{equation}

Recall from \eqref{generators} the elements $T,S\in\Gamma$. Let $U\sceq TS$ and, for $k\in\Z$, $g_k \sceq (U^k S)^{-1}$. A straightforward calculation shows 
\begin{equation}\label{formulargk}
 g_k^{-1} = \frac{1}{\sin\frac{\pi}{q}} \bmat{ \sin\left(\frac{k}{q}\pi\right) }{ \sin\left(\frac{k+1}{q}\pi\right) }{ \sin\left(\frac{k-1}{q}\pi\right) }{ \sin\left(\frac{k}{q}\pi\right) }.
\end{equation}
Further, $U^q = \id$ and $g_{k+q} = g_k$. Moreover, for $k=1,\ldots, q-2$, we have
\[
 0 = g_{q-1}^{-1}.0,\quad g_{k+1}^{-1}.\infty = g_k^{-1}.0, \quad g_1^{-1}.\infty=\infty. 
\]
Thus, the intervals $(g_k^{-1}.0, g_k^{-1}.\infty)$, $k=1,\ldots, q-1$, are almost a partition of $(0,\infty)$. More precisely, their union is disjoint and equals 
\[
 D_0\sceq (0,\infty) \setminus \left\{g_k^{-1}.\infty \ \left\vert\ k=1,\ldots, q-1 \vphantom{g_k^{-1}}\right.\right\} = \bigcup_{k=1}^{q-1} \left(g_k^{-1}.0, g_k^{-1}.\infty\right).
\]
For $x\in D_0$ and $k\in\{1,\ldots, q-1\}$ we define
\[
 \interval(x) \sceq k
\]
if and only if $x\in (g_k^{-1}.0, g_k^{-1}.\infty)$.

\begin{algorithm}
\caption{The reduction algorithm}\label{redalgo}
\begin{algorithmic}[1]
\Statex \textbf{Input:}  $F\in \POQF(\Gamma_h)$
\Statex

\Statex\Comment{Initialization}
\State $F_0\gets F$ 
\State $(x_0,y_0) \gets \mapzwei(F_0)$
\State $h_0 \gets \id$
\State $j \gets 0$ 
\Statex

\Statex \Comment{Calculation of preperiod}
\If{$x_0<0$} \Comment{Enforce $x_j>0$}
\State $F_1 \gets S.F_0$
\State $(x_1,y_1) \gets (S.x_0,S.y_0)$
\State $h_1 \gets S$
\State $j\gets 1$
\EndIf
\While{$F_j$ not reduced} \Comment{Note that $x_j>0$ throughout}
\State $k\gets \interval(x_j)$
\State $F_{j+1} \gets g_k.F_j$
\State $(x_{j+1},y_{j+1}) \gets  g_k.(x_j,y_j)$
\State $h_{j+1} \gets h_jg_k^{-1}$
\State $j \gets j+1$
\EndWhile
\State $m\gets j$ \Comment{Store length of preperiod}
\Statex

\Statex\Comment{Calculation of period}
\State $a_0 \gets \id$ 
\Repeat
\State $k\gets \interval(x_j)$
\State $F_{j+1} \gets g_k.F_j$
\State  $(x_{j+1},y_{j+1}) \gets g_k.(x_j,y_j)$
\State $j \gets j+1$
\State $a_{j-m} \gets  a_{j-m-1}g_k^{-1}$ \Comment{Note that $j$ is already enlarged by $1$}
\Until{$(x_{j},y_{j})=(x_m,y_m)$} 
\State $p\gets j-m$
\Statex

\Statex\Comment{Calculation of level and hyperbolic element}
\State $n \gets 1$
\While{$\mapvier^{-1}(h_ma_p^n h_m^{-1})\not=F$}
\State $n\gets n+1$
\EndWhile
\Statex

\Statex\Comment{Output}
\State \textbf{return} Level of $F$ is $n$.
\State \textbf{return} Preperiod is $F_0, F_1, \ldots, F_m$.
\State \textbf{return} Period is $F_{m+1}, F_{m+2}, \ldots, F_{m+p}$.
\State \textbf{return} Associated hyperbolic element is $h_ma_p^n h_m^{-1}$.
\end{algorithmic}
\end{algorithm}

The proof that Algorithm~\ref{redalgo} below indeed constitutes a reduction theory (Theorem~\ref{redprimitive} below) takes advantage of the discretization and symbolic dynamics for the geodesic flow on $X$ as provided in \cite{Pohl_Symdyn2d}. We recall some results from \cite{Pohl_Symdyn2d} and extend these for our purposes.

\subsubsection*{Reduced geodesics}
We start by characterizing the notion of reducedness of $\Gamma$-forms in terms of geodesics and subsets in $S\h$. In analogy to \eqref{defiformred} we call a geodesic $\gamma$ on $\h$ or $[\gamma]\in \UGeo/_\sim$ \textit{reduced} if 
\[
\gamma(+\infty)>0>\gamma(-\infty).
\]
We say that a geodesic $\gamma$ on $\h$ \textit{intersects}  $M\subseteq S\h$ if $\gamma'(t)\in M$ for some $t\in\R$. Let 
\[
 C' \sceq \left\{  a\tfrac{\partial}{\partial x}\vert_{iy} + b\tfrac{\partial}{\partial y}\vert_{iy}   \left\vert\ a>0,\ b\in\R,\ y>0\vphantom{ a\tfrac{\partial}{\partial x}\vert_{iy} + b\tfrac{\partial}{\partial y}\vert_{iy}} \right.\right\}
\]
be the set of unit tangent vectors in $S\h$ that are based on $i\R_{>0}$ and point `to the right' (see Figure~\ref{funddomSHpic}). The following characterization is obvious.

\begin{lemma}
\begin{enumerate}[{\rm (i)}]
\item A geodesic $\gamma$ on $\h$ is reduced if and only if $\gamma$ intersects $C'$.
\item Let $F\in\POQF(\Gamma_h)$ and $[\gamma]\in\UGeo/_\sim$ such that $\mgf([\gamma]) = \mapzwei(F)$. Then $F$ is reduced if and only if $\gamma$ is reduced.
\end{enumerate}
\end{lemma}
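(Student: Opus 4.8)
For part~(ii) the plan is to simply unwind the definitions. By construction $\mgf([\gamma]) = (\gamma(+\infty),\gamma(-\infty))$ and $\mapzwei(F) = (x_a(F),x_r(F))$, so the hypothesis $\mgf([\gamma]) = \mapzwei(F)$ says exactly that $\gamma(+\infty) = x_a(F)$ and $\gamma(-\infty) = x_r(F)$. Comparing this with the definition of reducedness for geodesics ($\gamma(+\infty) > 0 > \gamma(-\infty)$) and for $\Gamma$-forms ($x_a(F) > 0 > x_r(F)$) yields the equivalence immediately. (It is worth recording that for $F\in\POQF(\Gamma_h)$ the two zeros are genuine real numbers: no hyperbolic element of $\Gamma$ fixes $\infty$, since the stabiliser of the cusp $\infty$ in $\Gamma$ consists of powers of $T$, which are parabolic; hence the inequalities above are honest inequalities in $\R$.)

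For part~(i) the plan is a direct picture argument in $\h$. For the forward implication, assume $\gamma$ is reduced, so $\gamma(+\infty)\in\R_{>0}$ and $\gamma(-\infty)\in\R_{<0}$ are distinct finite points straddling $0$. By Proposition~\ref{twopoint} the image of $\gamma$ is then a Euclidean semicircle orthogonal to $\R$ joining $\gamma(-\infty)<0$ to $\gamma(+\infty)>0$, and the real part $\Rea\gamma(t)$ is a strictly monotone function of $t$ running from $\gamma(-\infty)$ to $\gamma(+\infty)$. Hence there is a (unique) parameter $t_0$ with $\Rea\gamma(t_0)=0$, i.e.\ $\gamma(t_0)\in i\R_{>0}$, and at $t_0$ the horizontal component of $\gamma'(t_0)$ is positive, so $\gamma'(t_0)\in C'$; thus $\gamma$ intersects $C'$.

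For the converse, suppose $\gamma'(t_0)\in C'$ for some $t_0$. Then $\gamma(t_0)=iy_0$ with $y_0>0$ and $\gamma'(t_0)$ has strictly positive $\tfrac{\partial}{\partial x}$-component. By Proposition~\ref{twopoint} the image of $\gamma$ is a semicircle orthogonal to $\R$ or a vertical line; a vertical line through $iy_0$ carries only vertical tangent vectors, which is incompatible with a positive horizontal component, so the image is a semicircle. Since $\Rea\gamma$ is monotone along that semicircle and is increasing at $t_0$, it is increasing throughout, whence $\gamma(-\infty)<\Rea\gamma(t_0)=0<\gamma(+\infty)$, so $\gamma$ is reduced.

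The whole argument is elementary, and the only mild point I would make explicit rather than leave to the figure is the monotonicity of $t\mapsto\Rea\gamma(t)$ along a semicircular geodesic with distinct finite endpoints; this is precisely what allows the passage between ``crosses $i\R_{>0}$ pointing to the right'' and ``the two endpoints lie on opposite sides of $0$''. Everything else is reading off the definitions of $\mgf$, $\mapzwei$, $C'$, and of reducedness.
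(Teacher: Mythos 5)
Your argument is correct and complete; the paper itself offers no proof of this lemma (it is simply declared ``obvious''), so your write-up is exactly the elaboration the authors left to the reader, and the key point you isolate --- monotonicity of $t\mapsto\Rea\gamma(t)$ along a semicircular geodesic --- is the right one. The only caveat worth recording concerns the forward direction of (i): under the paper's extended order on $P^1(\R)$ one has $\infty>0$, so a vertical geodesic running from a negative real number up to $\infty$ is formally ``reduced'' yet never meets $C'$; your step ``reduced, so $\gamma(\pm\infty)$ are finite'' therefore silently excludes this case, which is harmless in every application (lifts of periodic geodesics and $\Gamma$-forms have non-cuspidal, hence finite, endpoints --- as you yourself note for part (ii)) but is strictly a gap in the statement as literally written rather than in your argument.
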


\begin{center}
\begin{figure}[h]
\includegraphics{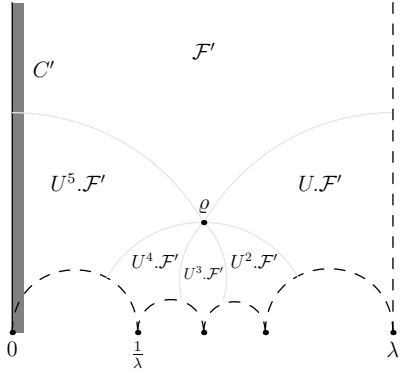} 
\caption{The base set for the fundamental set $\wt{\mc F}$ for $\Gamma$ in $S\h$ for $q=6$. At each point in the interior and at the geodesic arc $i\R_{>0}$ there is based at least one unit tangent vector. The set $C'$ in $S\h$ is indicated in dark gray.}\label{funddomSHpic}
\end{figure}
\end{center}

\subsubsection*{The set $C'$ and a cross section for the geodesic flow}

In \cite{Pohl_Symdyn2d}, the set $C'$ is constructed as a set of representatives for a cross section for the geodesic flow on $X$. It is crucial for developing a certain discrete dynamical system which is closely related to the first return map of this cross section and which is lurking in the background of the proof of Theorem~\ref{redprimitive} below.

\begin{defi}
Let $\wh M$ be a subset of $SX$ and $\wh\gamma$ a geodesic on $X$. 
\begin{enumerate}[{\rm (i)}]
\item\label{cross1} We say that $\wh\gamma$ \textit{intersects} $\wh M$ if $\wh\gamma'(t_0)\in\wh M$ for some $t_0\in \R$. In this case, we call $\wh\gamma'(t_0)$ an \textit{intersection point} and $t_0$ an \textit{intersection time}. Moreover, we say that the intersection is \textit{discrete} (in space and time) if there exists $\eps>0$ such that
\[
 \wh\gamma'((t_0-\eps,t_0+\eps)) \cap \wh U = \{ \wh\gamma'(t_0) \}.
\]
\item\label{cross2} Suppose that $\wh\gamma$ intersects $\wh M$ discretely in $\wh\gamma'(t_0)$. If  
\[
 t_1 = \min\{ t>t_0 \mid \wh\gamma'(t)\in \wh U\}
\]
exists then we call $(\wh\gamma'(t_1), t_1)$ (or also only $\wh\gamma'(t_1)$) the \textit{next intersection (point and time)} relative to $\wh\gamma'(t_0)$. If all intersections between $\wh\gamma$ and $\wh M$ are discrete then we call $(\wh\gamma'(t_j))_{j\in\Z}$ its \textit{sequence of intersections} if, for each $j\in\Z$, $\wh\gamma'(t_j)\in\wh U$ and $\wh\gamma'(t_{j+1})$ is the next relative intersection.
\item We call $\wh M$ a \textit{cross section} for the geodesic flow on $X$ if each intersection between any geodesic on $X$ and $\wh M$ is discrete, and each periodic geodesic intersects $\wh M$ (infinitely often).
\item We say that a subset $M'$ of $S\h$ is a \textit{set of representatives} for $\wh M$ if the quotient map $\pi\colon S\h\to SX$ induces a bijection between $M'$ and $\wh M$.
\item Suppose that $M$ is a subset of $S\h$ and $\gamma$ a geodesic on $\h$. We use the notions in \eqref{cross1} and \eqref{cross2} \textit{mutatis mutandis} for the intersections of $\gamma$ and $M$.
\end{enumerate}
\end{defi}

A fundamental set for the action of $\Gamma$ on $S\h$ is a subset of $S\h$ which contains exactly one representative of each $\Gamma$-orbit. The set $C'$ is essentially a component of the `visual boundary' of a fundamental set $\wt{\mc F}$ for the action of $\Gamma$ on $S\h$ which is constructed from the fundamental domain $\mc F$ in \eqref{funddom} for the action of $\Gamma$ on $\h$ as described in the following: We shift over the left half of $\mc F$ with the side-pairing translation $T$ and slightly adapt the boundaries to find the fundamental domain 
\[
 \mc F'\sceq \{ z\in\h\mid \lambda > \Rea z > 0,\ |z|>1,\ |z-\lambda|>1 \}
\]
for the action of $\Gamma$ on $\h$ (see Figure~\ref{funddom2}). 
\begin{center}
\begin{figure}[h]
\includegraphics{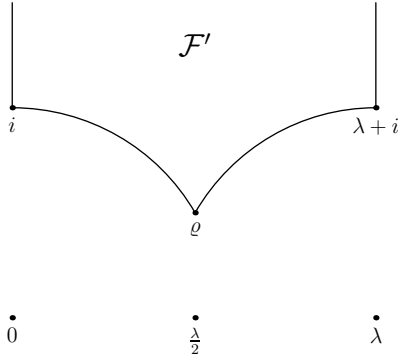} 
\caption{Another fundamental domain for $\Gamma$ in $\h$.}\label{funddom2}
\end{figure}
\end{center}
The side-pairings of $\mc F'$ are given by $T$ which pairs the vertical sides
$[i,\infty)_\gamma$ and $[\lambda+i,\infty)_\gamma$, and by $U$ which pairs the bottom sides $[i,\varrho]_\gamma$ and $[\varrho,\lambda+i]_\gamma$. The stabilizer of $\varrho$ is
\[
 \Stab_\Gamma(\varrho) = \left\langle U \right\rangle = \{ U, U^2,\ldots, U^q=\id\}.
\]
For any subset $M$ of $\h$ and unit tangent vector $v\in S\h$ we say that 
\begin{enumerate}[{\rm (i)}]
\item $v$ \textit{points into} $M$ if $\gamma_v( (0,\eps) ) \subseteq M^\circ$ for some $\eps >0$,
\item $v$ \textit{points out of} $M$ if $\gamma_v( (0,\eps) ) \subseteq (\h\setminus M)^\circ$ for some $\eps>0$, and
\item $v$ \textit{points along} the boundary of $M$ if $\gamma_v( (0,\eps) ) \subseteq \partial M$ for some $\eps>0$.  
\end{enumerate}
Note that in any of these cases, the base point of $v$ is contained in $\overline M$. Let 
\[
 \base\colon S\h \to \h
\]
denote the projection on base points.

A fundamental set for the action of $\Gamma$ on $S\h$ is given by the set $\wt{\mc F}'$ which consists of all unit tangent vectors with base point in $\overline{\mc F}'$ that point into $\mc F'$ and a certain choice of those unit tangent vectors which point along $\partial\mc F'$. We partition $\wt{\mc F}'$ into $q$ subsets as follows:

Let 
\[
 D\sceq \{ z\in \overline{\mc F}' \mid \lambda>\Rea z>0\}
\]
denote the closure of the fundamental domain $\mc F'$ in $\h$ but without the two vertical boundary sides, and let
\[
 \wt{\mc F}'' \sceq \{ v\in \wt{\mc F}' \mid \base(v)\in D\}
\]
denote the subset of the fundamental set $\wt{\mc F}'$ in $S\h$ which consists of those unit tangent vectors that are not based on any of the two vertical sides of $\mc F'$. We fix any partition of $\wt{\mc F}''$ into $q$ (pairwise disjoint) subsets $\wt W_j'$, $j=0,\ldots, q-1$, of $\wt{\mc F}''$ such that $\base(\wt W_j') = D$ for all $j\in\{0,\ldots, q-1\}$. We set
\begin{align*}
\wt W_0 & \sceq \{ v\in \wt{\mc F}' \mid \base(v) \in [i, \infty)_\gamma \} \cup \wt W_0',
\\
\wt W_1 & \sceq \{ v\in \wt{\mc F}' \mid \base(v) \in [\lambda+i, \infty)_\gamma \} \cup \wt W_1',
\intertext{and}
\wt W_j & \sceq \wt W_j'\quad\text{for $j=2,\ldots, q-1$.}
\end{align*}
Then
\begin{equation}\label{funddomSH}
 \wt{\mc F} \sceq \bigcup_{j=0}^{q-1} U^{-j}\wt W_j
\end{equation}
is a fundamental set for $\Gamma$ in $S\h$, and $C'$ is the set of elements in $\wt{\mc F}$ which are based on $i\R_{>0}$, possibly up to some vectors which point along $i\R_{>0}$ and which we may ignore for our purposes (see Figure~\ref{funddomSHpic}). Let 
\[
 B \sceq \base(\wt{\mc F}).
\]
By analyzing the previously described construction of $\wt{\mc F}$ we find the following properties of $B$ (cf.\@ \cite{Pohl_Symdyn2d}):
\begin{enumerate}[(A)]
\item\label{propBA}  The set $B$ is a geodesically convex polyhedron of finite area. All boundary points of $B$ in $P^1(\R)$ are cuspidal.
\item\label{propBB} The boundary $\partial B$ of $B$ in $\h$ decomposes into $q$ geodesic arcs, namely the arcs $(g_k^{-1}.0,g_k^{-1}.\infty)_\gamma$ for $k=0,\ldots, q-1$.
\item\label{propBC} We have $\base(C) = \Gamma.\partial B$. The intersection between $\base(C)$ and $B^\circ$ is empty.
\item\label{propBD} For $k=0,\ldots, q-1$ let 
\[
 C_k \sceq \left\{ v\in C\ \left\vert\ \base(v)\in g_k^{-1}.(i\R_{>0}),\ \gamma_v( (0,\infty) ) \cap B = \emptyset \right.\right\}
\]
denote the set of unit tangent vectors in $C$ that are based on the geodesic arc $g_k^{-1}.(i\R_{>0}) = (g_k^{-1}.0,g_k^{-1}.\infty)_\gamma$ and that point out of $B$. Then 
\[
 C_k = g_k^{-1}.C',
\]
and $g_k$ is the unique element in $\Gamma$ with this property.
\end{enumerate}

Let 
\[
 \wh C \sceq  \pi(C'). 
\]

\begin{prop}[\cite{Pohl_Symdyn2d}]\label{crosssection}
The set $\wh C$ is a cross section for the geodesic flow on $X$ with $C'$ as set of representatives. 
\end{prop}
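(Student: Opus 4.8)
To prove Proposition~\ref{crosssection}, I would establish the two required properties separately: first, that every intersection of any geodesic on $X$ with $\wh C$ is discrete (in space and time), and second, that every periodic geodesic on $X$ intersects $\wh C$ infinitely often. For the first property, the key is to work on $\h$ with the fundamental set $\wt{\mc F}$ and its base set $B$. An intersection of a geodesic $\wh\gamma$ on $X$ with $\wh C$ corresponds, after lifting, to a time $t_0$ at which some $\Gamma$-translate of $\gamma$ has its tangent vector $\gamma'(t_0)$ lying in $C'$, i.\,e., based on $i\R_{>0}$ and pointing to the right. By property~\eqref{propBC}, the base points of vectors in $C = \Gamma.C'$ lie on $\Gamma.\partial B$, and by~\eqref{propBA}--\eqref{propBB} the set $\partial B$ consists of finitely many geodesic arcs whose $\Gamma$-translates form a locally finite family of geodesic arcs in $\h$ (this uses that $\Gamma$ is discrete and $B$ has finite area with cuspidal boundary points). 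A geodesic on $\h$ which is not itself contained in one of these arcs meets this locally finite family in a discrete set of times; the excluded case (a geodesic running along an arc of $\Gamma.\partial B$) does not occur for lifts of honest geodesics since such arcs are not complete geodesics, or can be handled by the convention on vectors pointing along $\partial\mc F'$ that we are free to ignore. Transferring back to $X$ via $\pi$ preserves discreteness because $\pi$ is a local diffeomorphism on $S\h$.

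**Infinitely many intersections of periodic geodesics.** For the second property, let $\wh\gamma\in\PGeo(\Gamma)$ and let $\gamma$ be a lift to $\h$. Since $\wh\gamma$ is periodic, neither endpoint $\gamma(\pm\infty)$ is cuspidal (recalled at the end of Section~\ref{sec:hecketriangle}); in particular $\gamma$ is a complete semicircle (or line) with both endpoints in $P^1(\R)\mminus\Gamma.\infty$. I would argue that $\gamma$, or some $\Gamma$-translate of it, must cross $i\R_{>0}$ from left to right: after applying a suitable element of $\Gamma$ one may assume $\gamma$ enters the fundamental domain $\mc F'$, and by the geometry of $B$ (property~\eqref{propBD}: the arcs $g_k^{-1}.(i\R_{>0})$ together with their $\Gamma$-translates tile the relevant region, with $g_k$ transporting $C'$ to the boundary piece $C_k$) the geodesic must exit $B$ through one of the arcs $(g_k^{-1}.0,g_k^{-1}.\infty)_\gamma$, and the exit vector lies in $C_k = g_k^{-1}.C'$, hence its image in $SX$ lies in $\wh C$. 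Periodicity then forces this to recur infinitely often: the first-return dynamics along $\wh\gamma$ is infinite because $\wh\gamma'(\R)$ is a compact orbit that cannot be eventually disjoint from $\wh C$ once it meets it (apply the periodic deck transformation, which by Proposition~\ref{displacement} acts as a positive time-shift on the lift). This is essentially the content already contained in \cite{Pohl_Symdyn2d}, which constructs $C'$ precisely for this purpose, so I would cite that construction for the details and only sketch how properties~\eqref{propBA}--\eqref{propBD} assemble into the statement.

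**Main obstacle.** The subtle point is the treatment of boundary vectors — those based on $\partial\mc F'$ or pointing along geodesic arcs of $\Gamma.\partial B$ — and of geodesics tangent to, or eventually running into, the cusp. One must check that the finitely many choices made in assembling $\wt{\mc F}$ (the partition into $\wt W_j$, the selection among vectors pointing along $\partial\mc F'$) are compatible with $C'$ being a genuine set of representatives for $\wh C$, i.\,e., that $\pi$ restricted to $C'$ is injective as well as surjective onto $\wh C$. Injectivity follows from $\wt{\mc F}$ being a fundamental set and $C'\subseteq\wt{\mc F}$; surjectivity onto $\wh C=\pi(C')$ is by definition. Showing that \emph{every} periodic geodesic meets $\wh C$ — as opposed to almost every, or every non-exceptional one — is where the careful choice of which along-the-boundary vectors to include in $\wt{\mc F}$ matters, and I expect this bookkeeping, rather than any deep idea, to be the real work; it is exactly what \cite{Pohl_Symdyn2d} carries out, so in this paper it suffices to invoke that reference.
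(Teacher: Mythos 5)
Your proposal is correct and follows essentially the same route as the paper, which itself only gives a partial proof (Lemma~\ref{lem:intersect}: translate a tangent vector of a lift into the fundamental set $\wt{\mc F}$, use geodesic convexity of $B$ together with non-cuspidality of the endpoints to force an exit through a boundary arc, landing in $g_k^{-1}.C'$) and defers the remaining bookkeeping on discreteness and boundary vectors to \cite{Pohl_Symdyn2d}, exactly as you do. One small inaccuracy: the boundary arcs of $B$ \emph{are} complete geodesics (property~\eqref{propBB}), so the correct resolution of the "geodesic running along a boundary arc" case is your second alternative — such tangent vectors point along $i\R_{>0}$ and are excluded from $C'$ by construction — not the claim that the arcs are incomplete.
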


We provide a partial proof of Proposition~\ref{crosssection} in Lemma~\ref{lem:intersect} below. For a complete proof we refer to \cite{Pohl_Symdyn2d}.

\subsubsection*{Sequences of intersections and a proof of the reduction theory}

For the proof of the reduction algorithm we need to understand the sequences of intersections between periodic geodesics on $X$ and the cross section $\wh C$, and which effect they have on certain lifts of periodic geodesics to $\h$. For that let
\[
 C \sceq \Gamma.C'. 
\]

The first two statements of the following lemma provide a partial proof of Proposition~\ref{crosssection}.

\begin{lemma}\label{lem:intersect}
Let $\gamma\colon \R\to\h$ be  a lift of a periodic geodesic $\wh\gamma$ on $\Gamma\backslash\h$. 
\begin{enumerate}[{\rm (i)}]
\item\label{intersect1} Then $\gamma$ intersects $C$. In particular, there exists a lift of $\wh\gamma$ which intersects $C'$.
\item\label{intersect2} The geodesic $\gamma$ intersects $C'$ at most once. 
\item\label{intersect3} If $\gamma$ intersects $C'$, then the next intersection is on $g_k^{-1}.C'$ for some $k\in\{1,\ldots, q-1\}$. For $k=1,\ldots,q-1$, it is on $g_k^{-1}.C'$ if and only if $\gamma(\infty) \in (g_k^{-1}.0, g_k^{-1}.\infty)$. 
\end{enumerate}
\end{lemma}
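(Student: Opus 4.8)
The plan is to work entirely on the level of $\h$, using the description of the fundamental set $\wt{\mc F}$ and the properties \eqref{propBA}--\eqref{propBD} of its base set $B$ recorded above. Since $\wh\gamma$ is periodic, both endpoints $\gamma(\pm\infty)$ lie in $P^1(\R)\setminus\Gamma.\infty$, i.\,e.\ they are not cuspidal; in particular, by \eqref{propBA}, neither endpoint is a boundary point of any $\Gamma$-translate of $B$. This is the running hypothesis that makes the intersection behaviour ``clean'' and prevents $\gamma$ from limiting into a cusp or running along a boundary arc.

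For part \eqref{intersect1}, I would argue that $\gamma$ must leave and re-enter each translate $g.B^\circ$ infinitely often (it cannot be eventually contained in one translate of the geodesically convex finite-area polyhedron $B$, since its endpoints are non-cuspidal and $B$ meets $P^1(\R)$ only in cuspidal points). Each time $\gamma$ crosses $\partial(g.B)$ transversally it produces a unit tangent vector based on $\Gamma.\partial B = \base(C)$ (property \eqref{propBC}) pointing out of some translate of $B$; by \eqref{propBD} each such vector, after applying the appropriate $g_k$, lies in $C'$, so $\gamma'(t)\in C = \Gamma.C'$ for that $t$, and some $\Gamma$-translate of $\gamma$ meets $C'$ itself. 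Non-cuspidality of the endpoints guarantees the crossings are transversal, hence discrete, which simultaneously gives the discreteness half of Proposition~\ref{crosssection} for periodic geodesics.

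For part \eqref{intersect2}: vectors in $C'$ are based on $i\R_{>0} = (0,\infty)_\gamma = \partial B$ restricted to that arc and point out of $B$, i.\,e.\ into the region $\{\,|z|<1\,\}$ or equivalently with $\gamma_v((0,\infty))\cap B=\emptyset$ as in the definition of $C_k$. If $\gamma$ met $C'$ at two times $t_0<t_1$, then between $t_0$ and $t_1$ the geodesic would have to return to $B$ from the ``outside'' region and cross the arc $i\R_{>0}$ again in the same outward direction, which is impossible for a single geodesic semicircle: a semicircle orthogonal to $\R$ crosses the vertical line $i\R_{>0}$ at most once, and once it has crossed it pointing to the right it never crosses it again pointing to the right. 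This is really just the fact that $\gamma(+\infty)>0>\gamma(-\infty)$ pins down the geometry, together with $C'$ being a ``one-sided'' set. For part \eqref{intersect3}: assuming $\gamma$ meets $C'$ at time $t_0$, the geodesic then enters the region $\h\setminus B$ and, by the local picture at $\partial B$, the next return to $\base(C) = \Gamma.\partial B$ is the first crossing of $\partial B$ on the ``far side'', which by \eqref{propBB} is one of the arcs $(g_k^{-1}.0,g_k^{-1}.\infty)_\gamma$ with $k\in\{1,\dots,q-1\}$ (the arc $k=0$, namely $i\R_{>0}$ itself, is excluded by \eqref{intersect2}); by \eqref{propBD} the resulting vector lies in $C_k = g_k^{-1}.C'$. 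Finally, which $k$ occurs is determined by which arc the forward endpoint $\gamma(+\infty)$ ``faces'': the geodesic from a point of $i\R_{>0}$ exiting $B$ hits the boundary arc $(g_k^{-1}.0,g_k^{-1}.\infty)_\gamma$ iff $\gamma(+\infty)$ lies in the interval $(g_k^{-1}.0,g_k^{-1}.\infty)$, since these intervals partition $D_0$ and the continuation of $\gamma$ stays in the geodesic triangle-like region cut off by that arc.

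The main obstacle I anticipate is the bookkeeping at the boundary: making rigorous the claim that ``the next crossing of $\Gamma.\partial B$ after exiting through $i\R_{>0}$ is through exactly one of the arcs $(g_k^{-1}.0, g_k^{-1}.\infty)_\gamma$, $k\ge 1$, and corresponds to the interval containing $\gamma(+\infty)$'' requires understanding how the translates $g_k^{-1}.B$ for $k=1,\dots,q-1$ tile the neighbourhood of $\partial B$ on the outward side of $i\R_{>0}$ — essentially the fan of images of $B$ around the vertex $\varrho$ under the stabilizer $\langle U\rangle$ — and checking that a geodesic exiting $B$ across $i\R_{>0}$ passes into precisely the translate whose ``opposite'' boundary arc subtends the interval containing $\gamma(+\infty)$. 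Once the combinatorial geometry near $\varrho$ is set up (using $U^q=\id$, $g_{k+q}=g_k$, and the identities $g_{k+1}^{-1}.\infty = g_k^{-1}.0$, $g_1^{-1}.\infty=\infty$, $g_{q-1}^{-1}.0=0$ already recorded), all three parts follow; for full details in the edge cases I would defer to \cite{Pohl_Symdyn2d}.
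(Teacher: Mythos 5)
Your overall strategy is the same as the paper's: use the properties \eqref{propBA}--\eqref{propBD} of $B$, non-cuspidality of $\gamma(\pm\infty)$, and geodesic convexity. Part \eqref{intersect1} is essentially the paper's argument (the paper moves a single tangent vector $\gamma'(t_0)$ into the fundamental set $\wt{\mc F}$ and follows the forward ray until it exits $B$; you phrase it via crossings of translates of $\partial B$, which amounts to the same thing), and the core fact you invoke for \eqref{intersect2} --- that $\gamma(\R)$ meets the complete geodesic arc $i\R_{>0}=\base(C')$ at most once --- is exactly what the paper uses.

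However, there is a genuine error in your geometric setup that derails part \eqref{intersect3} as written: you assert that vectors in $C'$ point \emph{out of} $B$ (``into the region $\{|z|<1\}$''). In fact $C'$ consists of the vectors on $i\R_{>0}$ pointing to the right, and $B\subseteq\{0\le\Rea z\le\lambda\}$ lies to the \emph{right} of $i\R_{>0}$ (this is forced by \eqref{propBD} for $k=0$: $C_0=S.C'$ is the set of \emph{left}-pointing vectors on $i\R_{>0}$ and these are the ones pointing out of $B$). So a geodesic meeting $C'$ at time $t_0$ \emph{enters} $B$, it does not leave it. Your picture for \eqref{intersect3} --- the geodesic exits into $\h\setminus B$ and then ``returns to $\partial B$ on the far side'' through a fan of translates on the outward side of $i\R_{>0}$ --- is therefore inconsistent: a geodesic that actually left $B$ through $i\R_{>0}$ would head into $\{\Rea z<0\}$ and never reach the other boundary arcs of the convex set $B$, and its next meeting with $\base(C)=\Gamma.\partial B$ would be on the boundary of some \emph{other} translate $g.B$, not on $\partial B$. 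The correct (and simpler) picture is: after the $C'$-intersection the geodesic traverses $B^\circ$, where it meets no point of $\base(C)$ by \eqref{propBC}, and exits through exactly one arc $(g_k^{-1}.0,g_k^{-1}.\infty)_\gamma$ with $k\ne 0$ (the case $k=0$ being excluded by \eqref{intersect2}); convexity of $B$ shows this arc is the one with $\gamma(+\infty)\in(g_k^{-1}.0,g_k^{-1}.\infty)$, and \eqref{propBD} identifies the exit vector as an element of $C_k=g_k^{-1}.C'$. Your final criterion is thus correct, but the route you describe to it would fail; fixing the orientation of $C'$ relative to $B$ turns your argument into the paper's.
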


\begin{proof}
Pick $t_0\in\R$ and consider $v\sceq \gamma'(t_0)$. Since $\wt{\mc F}$ is a fundamental set for the action of $\Gamma$ on $S\h$, we find (a unique) $h\in\Gamma$ such that $w\sceq h.v\in\wt{\mc F}$. Let $\eta \sceq h.\gamma$. 

Since $\gamma$, and hence $\eta$, is a lift of a periodic geodesic, neither of its limit points is cuspidal. By \eqref{propBA} and \eqref{propBB}, the boundary of $B$ consists of geodesic arcs, each of which connects two cuspidal points. Hence $\eta(\R)$ is not contained in the boundary of $B$, and $w$ does not point along the boundary of $B$. In fact, $w$ points into $B$. Since $B$ is geodesically convex and of finite area, and $\eta(\infty)$ is not cuspidal, the set $\eta((t_0,\infty))$ intersects the boundary of $B$ in exactly one point, say in $\eta(t_1)$.

By \eqref{propBD}, $\eta(t_1)$ is contained in $g_k^{-1}.(i\R_{>0})$ for some $k\in\{0,\ldots, q-1\}$. The tangent vector $u\sceq \eta'(t_1)$ points out of $B$, hence is contained in $g_k^{-1}.C$. Thus, $g_k.u \in C'$, the geodesic $g_kh.\gamma$ is a lift of $[\gamma]$ that intersects $C'$, and $\gamma$ intersects $h^{-1}g_k^{-1}.C'\subseteq\wt C$. This proves \eqref{intersect1}.

Statement~\eqref{intersect2} follows from geodesic convexity of $\gamma(\R)$ and $\base(C') = i\R_{>0}$.

For \eqref{intersect3} suppose that $\gamma'(t_0) \in C'$. Hence, in the argument above, $h=\id$, $w=v$, $\eta=\gamma$, and the next intersection is $\gamma'(t_1) \in g_k^{-1}.C'$ for some $k\in\{0,\ldots, q-1\}$. By geodesic convexity, $k=0$ is impossible, and $\gamma'(t_1)\in g_k^{-1}.C'$ if and only if $\gamma(\infty)\in (g_k^{-1}.0,g_k^{-1}.\infty)$.
\end{proof}

The standard geodesic $\gamma_\st$ defines two half spaces in $\h$, the \textit{left half space}
\[
 L(\gamma_\st) \sceq \{ z\in\h \mid \Rea z < 0\}
\]
and the \textit{right half space}
\[
 R(\gamma_\st) \sceq \{ z\in\h \mid \Rea z > 0\}.
\]
Let $g\in G$. Then we call
\[
 L(g.\gamma_\st) \sceq g.L(\gamma_\st)
\]
the \textit{left half space} of $g.\gamma_\st$, and 
\[
 R(g.\gamma_\st) \sceq g.R(\gamma_\st)
\]
the \textit{right half space} of $g.\gamma_\st$. By Proposition~\ref{twopoint}, this definition yields a left/right half space for any geodesic on $\h$. 

Let $\gamma\in\UGeo$ and $x\in P^1(\R)$. By a slight abuse of notions, we say that $x$ \textit{belongs to} $L(\gamma)$ (notation: $x\overline{\in}L(\gamma)$) if $x$ is contained in the closure of $L(\gamma)$ in $\h\cup P^1(\R)$ but $x\notin \{\gamma(\pm\infty)\}$. Moreover, we use the appropriately modified definition of $x$ \textit{belonging to} $R(\gamma)$.

\begin{lemma}\label{lem:goodelem}
For any pair $(x,y)$ of distinct non-cuspidal points there exists $g\in\Gamma$ such that $x\overline{\in} R(g.\gamma_\st)$ and $y\overline{\in} L(g.\gamma_\st)$. If $x>0$ then we can choose $g\in\Gamma$ such that $\infty > g.0\geq 0$.
\end{lemma}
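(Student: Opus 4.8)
The plan is to exploit the geometry of the set $C$ together with its cross-section property. Let $(x,y)$ be a pair of distinct non-cuspidal points and let $\gamma_0\in\UGeo$ be any geodesic with $\gamma_0(+\infty)=x$ and $\gamma_0(-\infty)=y$ (such a geodesic exists and is unique up to reparametrization by Proposition~\ref{twopoint}); it is a lift of a geodesic $\wh\gamma$ on $X$, and since neither endpoint is cuspidal, $\wh\gamma$ does not vanish into the cusp. I would first argue that $\gamma_0$ intersects $C=\Gamma.C'$. The cleanest route is to mimic the proof of Lemma~\ref{lem:intersect}\eqref{intersect1}: pick any $v=\gamma_0'(t_0)$, move it into the fundamental set $\wt{\mc F}$ by some $h\in\Gamma$, and observe that because $B$ is a geodesically convex polyhedron of finite area (property~\eqref{propBA}) and the endpoints of $\eta=h.\gamma_0$ are non-cuspidal, the forward ray $\eta((t_0,\infty))$ must exit $B$ through $\partial B$, at a point lying on some arc $g_k^{-1}.(i\R_{>0})$ by property~\eqref{propBB}. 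The exiting tangent vector points out of $B$, hence (by property~\eqref{propBD}) lies in $g_k^{-1}.C'\subseteq C$, so $\eta$, and therefore $\gamma_0$, meets $C$.

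Having found a point $\gamma_0'(t_1)\in C=\Gamma.C'$, choose $g\in\Gamma$ with $g.\gamma_0'(t_1)\in C'$. Then $g.\gamma_0$ is a reduced-type geodesic at the relevant time: since $C'$ consists of vectors based on $i\R_{>0}$ pointing ``to the right'', the vector $g.\gamma_0'(t_1)$ is based on $i\R_{>0}$ and points into the right half space $R(\gamma_\st)$. By $G$-equivariance of all the half-space constructions, this means $x=\gamma_0(+\infty)$ has $g.x\overline\in R(\gamma_\st)$ and $y=\gamma_0(-\infty)$ has $g.y\overline\in L(\gamma_\st)$, i.e. $g.x>0>g.y$ in the order on $P^1(\R)$ (using that $g.\gamma_0'(t_1)$ based on $i\R_{>0}$ and pointing right forces the forward endpoint to the right of the imaginary axis and the backward endpoint to the left; the non-cuspidality rules out the degenerate equality cases). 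Rewriting: setting $g^{-1}$ in place of $g$ and unwinding, there exists $g\in\Gamma$ with $x\overline\in R(g.\gamma_\st)$ and $y\overline\in L(g.\gamma_\st)$, which is the first assertion.

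For the refinement, suppose additionally $x>0$. I want to further adjust $g$ so that $\infty>g.0\ge 0$, i.e. so that the basepoint ``anchor'' $g.0$ — equivalently the left endpoint of the translated standard geodesic, $g.\gamma_\st(-\infty)=g.0$ — lies in $[0,\infty)$. The idea is to apply the partition of $D_0=(0,\infty)\setminus\{g_k^{-1}.\infty\mid k=1,\dots,q-1\}$ into the intervals $(g_k^{-1}.0,g_k^{-1}.\infty)$, $k=1,\dots,q-1$: after the first normalization we may assume $x>0$ remains positive (replacing $g$ by an appropriate $\Gamma$-translate fixing positivity, using $\interval(x)$ as in the reduction algorithm), and then compose with the corresponding $g_k$'s, which are precisely the elements realizing $C_k=g_k^{-1}.C'$ in property~\eqref{propBD}; tracking $g.0$ through one step of this process and using $g_1^{-1}.\infty=\infty$, $g_{q-1}^{-1}.0=0$ together with $g_{k+1}^{-1}.\infty=g_k^{-1}.0$ shows the anchor can be pushed into $[0,\infty)$ while keeping $x>0$ and the half-space conditions.

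The main obstacle I anticipate is the bookkeeping in this last paragraph: precisely matching the ``points into $R$/$L$'' conditions on tangent vectors in $C'$ with the inequalities $g.x>0>g.y$ and $\infty>g.0\ge 0$ on boundary points, including correctly handling the boundary/degenerate cases (when $g.x$ or $g.y$ or $g.0$ would land on $0$ or $\infty$), and verifying that the element $g_k$ supplied by property~\eqref{propBD} does exactly what is needed — essentially one must re-examine the construction of $\wt{\mc F}$ and the role of $g_k$ in \cite{Pohl_Symdyn2d} carefully rather than quote it as a black box.
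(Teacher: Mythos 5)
Your argument for the \emph{first} assertion is correct and takes a genuinely different route from the paper's. The paper works with the unoriented arc $(x,y)_\gamma$ and the tiling of $\h$ by $\Gamma$-translates of $B$; since a separating side a priori realizes only one of the two possible orientations, the paper has to introduce the convex union $\wt B$ of all translates of $B$ contained in $R(\gamma)$ and argue via its side nearest to $(x,y)_\gamma$ that the orientation $x\overline{\in}R$, $y\overline{\in}L$ can always be achieved. You instead run the \emph{oriented} geodesic $\gamma_0$ from $y$ to $x$ through the proof of Lemma~\ref{lem:intersect}\eqref{intersect1} --- which, as you correctly observe, uses periodicity only through the non-cuspidality of the endpoints --- and the fact that the exit vector points out of the relevant translate of $B$ hands you the correct orientation for free. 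For the first claim this is, if anything, cleaner than the paper's argument.

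The \emph{second} assertion, however, is not proved, and the missing piece is not mere bookkeeping. Your construction gives no control over $g.0$: which $\Gamma$-translate of $C'$ the geodesic $\gamma_0$ hits depends on the arbitrary starting time $t_0$, and for $x>0$ there genuinely exist separating elements violating $\infty>g.0\geq 0$. For instance, if $0<x<y$ and the separating arc produced happens to be a vertical line $\Rea z=c$ with $x<c<y$, the correctly oriented element is $T^nS$ with $(T^nS).0=\infty$; and one cannot repair this by post-composing with $S$, since that exchanges $L$ and $R$ and destroys the separation. Your proposed fix --- refining via $x\in\big(g_{k_1}^{-1}\cdots g_{k_j}^{-1}.0,\;g_{k_1}^{-1}\cdots g_{k_j}^{-1}.\infty\big)$ using $\interval$ and Proposition~\ref{distrC} --- is the right kind of idea, but ``one step of this process'' cannot suffice: what is actually needed is that these \emph{nested} intervals containing $x$ eventually exclude $y$, i.e.\ that the cylinder intervals of the coding separate distinct non-cuspidal points. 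That is a genuine expansivity statement which neither Proposition~\ref{distrC} nor property \eqref{propBD} supplies, and it cannot be imported from Corollary~\ref{cor:preperiod}, since that corollary is itself deduced from the present lemma. The paper sidesteps this entirely: when $x,y>0$ the half-plane $\{\Rea z\leq 0\}$ is contained in $\wt B$, so the nearest-side construction automatically produces a side with both endpoints in $[0,\infty]$ and $g.0\neq\infty$, while for $y<0$ one simply takes $g=\id$. To complete your approach you would either have to prove the separation property of the nested intervals directly, or fall back on an argument of the paper's convexity type for this half of the statement.
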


\begin{proof}
Let $I(x,y) = (x,y)_\gamma$ denote the geodesic arc connecting $x$ and $y$, and note that $x,y$ are both real since they are non-cuspidal. Recall that $B$ is a (closed) geodesically convex polyhedron bounded by maximal geodesic arcs only, and all its vertices are cuspidal points. The points $x$ and $y$ are non-cuspidal. Therefore, if $I(x,y)$ intersects $h.B$ for some $h\in\Gamma$ then $I(x,y)$ intersects a boundary component of $h.B$, and vice versa. 

Recall now that all sides of $B$ are $\Gamma$-translates of $i\R_{>0}$ (the geodesic arc of the standard geodesic $\gamma_\st$). Hence, each side of $B$ is the geodesic arc 
\[
 (a.0, a.\infty)_\gamma = (aS.0,aS.\infty)_\gamma
\]
for some $a\in\Gamma$. Thus, if $I(x,y)$ intersects $h.B$ for some $h\in\Gamma$ then there exists $g=g\in\Gamma$ such that either 
\begin{equation}\label{sit1}
x \overline{\in} R(g.\gamma_\st) \quad\text{and}\quad y\overline{\in} L(g.\gamma_\st)
\end{equation}
or
\begin{equation}\label{sit2}
y \overline{\in} R(g.\gamma_\st) \quad\text{and}\quad x\overline{\in} L(g.\gamma_\st) .
\end{equation}
For the first statement of this lemma, it remains to show that we can always realize \eqref{sit1}.

Suppose that $I(x,y) = \gamma(\R)$ for $\gamma\in \UGeo$. Suppose further, without loss of generality, that $\infty\overline{\in} R(\gamma)$.  Recall that the $\Gamma$-translates of $B$ tile the upper half plane $\h$.  Let 
\[
 \wt B \sceq \bigcup_{g\in\Gamma, gB\subseteq R(\gamma)} gB 
\]
denote the union of the $\Gamma$-translates of $B$ that are completely contained in $R(\gamma)$. Due to the tiling properties of $B$, the set $\wt B$ is a geodesically convex subset all of whose sides in $\h$ are maximal geodesic arcs. Let $J$ be the (unique) side of $\wt B$ which is nearest to $I(x,y)$. This is, $I(x,y)$ is completely contained in one of the half spaces defined by $J$, and $\wt B$ is contained in the other half space. Then the side-pairing properties of $B$ and the fact that $\Gamma.B$ covers $\h$ show the existence of $h\in\Gamma$ such that $h.B$ is not contained in $\wt B$ but $J$ is one of its sides. The maximality of $\wt B$ implies that $h.B$ intersects $I(x,y)$, more precisely, that at least one vertex of $h.B$ belongs to $R(\gamma)$ and at least one (other) vertex of $h.B$ belongs to $L(\gamma)$. Thus, $h.B$ has a side which yields \eqref{sit1}, and also one side which yields \eqref{sit2}.

Suppose now that $x>0$. If $y>0$ then 
\[
\{z\in\h\mid \Rea z \leq 0\} \subseteq \wt B.
\]
Hence the construction of $g$ also implies the second statement of this lemma. If $y<0$ then we can choose $g=\id$. This completes the proof.
\end{proof}

\begin{prop}\label{distrC}
Let $h\in\Gamma$ with $h.0,h.\infty\in \R_{\geq 0}\cup\{\infty\}$.
\begin{enumerate}[{\rm (i)}]
\item\label{Ci} If $h.\infty > h.0 \geq 0$ then $h=g_{k_1}^{-1}\cdots g_{k_\ell}^{-1}$ for unique elements $\ell\in\N_0$ and $k_1,\ldots, k_\ell\in\{1,\ldots, q-1\}$.
\item\label{Cii} If $h.0 > h.\infty \geq 0$ then $h=g_{k_1}^{-1}\cdots g_{k_\ell}^{-1}S$ for unique elements $\ell\in\N_0$ and $k_1,\ldots, k_\ell\in\{1,\ldots, q-1\}$.
\item If $h.0\notin\{0,\infty\}$ then $g_{k_1}$ is determined (in cases \eqref{Ci} and \eqref{Cii}) by $h.0\in [g_{k_1}^{-1}.0, g_{k_1}^{-1}.\infty)$. 
\item If $h.0=0$ (which can only happen in case \eqref{Ci}) then $\ell=0$ if $h.\infty=\infty$, and $g_{k_1} = g_{q-1}$ otherwise. 
\item\label{Cv} If $h.0=\infty$ (which can only happen in case \eqref{Cii}) then $\ell=0$ if $h.\infty=0$, and $g_{k_1}^{-1}=T$ otherwise. 
\end{enumerate}
\end{prop}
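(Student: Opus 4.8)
\emph{Reduction.} The plan is to treat \eqref{Ci} as the main case. Since $hS.0=h.\infty$ and $hS.\infty=h.0$ and $h=(hS)S$, the element $hS$ satisfies the hypothesis of \eqref{Ci} exactly when $h$ satisfies that of \eqref{Cii}; so I will deduce \eqref{Cii} and \eqref{Cv} from \eqref{Ci} (and from \eqref{Ci}'s first-letter rule) by passing to $hS$ and rewriting the relevant intervals with the chain relations $g_{k+1}^{-1}.\infty=g_k^{-1}.0$, $g_1^{-1}.\infty=\infty$, $g_{q-1}^{-1}.0=0$. I expect the only fussy point of this translation to be the \emph{vertex cases}, where $h.0$ or $h.\infty$ equals some $g_k^{-1}.0=g_{k+1}^{-1}.\infty$, or $0$, or $\infty$, and one must take the half-open interval on the side from which $h.(i\R_{>0})$ approaches.

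\emph{The ``if'' direction together with (iii), (iv).} For $k\in\{1,\dots,q-1\}$ the matrix $g_k^{-1}$ of \eqref{formulargk} has all entries $\geq 0$, since $\sin(j\tfrac{\pi}{q})\geq 0$ for $0\leq j\leq q$; hence it maps $\R_{\geq 0}\cup\{\infty\}$ monotonically into the interval from $g_k^{-1}.0$ to $g_k^{-1}.\infty$ (a fractional linear map with nonnegative entries and determinant $1$ has positive derivative), with $g_k^{-1}.0<g_k^{-1}.\infty$ by the identity $\sin^2(k\tfrac\pi q)-\sin((k-1)\tfrac\pi q)\sin((k+1)\tfrac\pi q)=\sin^2\tfrac\pi q$. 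Inducting on $\ell$, I would write $h=g_{k_1}^{-1}h'$ with $h'=g_{k_2}^{-1}\cdots g_{k_\ell}^{-1}$ satisfying \eqref{Ci} (so $h'.\infty>h'.0\geq 0$, in particular $h'.0<\infty$): then $h.0=g_{k_1}^{-1}.(h'.0)\in[g_{k_1}^{-1}.0,g_{k_1}^{-1}.\infty)$ and $h.\infty=g_{k_1}^{-1}.(h'.\infty)>h.0\geq 0$, which shows $h$ satisfies \eqref{Ci} and that its first letter is the one with $h.0\in[g_{k_1}^{-1}.0,g_{k_1}^{-1}.\infty)$ -- equivalently $k_1=\interval(h.0)$ when $h.0\in D_0$ -- i.e.\ (iii). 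Finally $h.0=0$ forces $g_{k_1}^{-1}.0=0$, i.e.\ $k_1=q-1$ (the only $k$ with $\sin((k+1)\tfrac\pi q)=0$), and then $h.\infty=\infty$ would require $h'.\infty=-1/\lambda<0$, which is impossible, so $\ell=0$ in that case -- i.e.\ (iv).

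\emph{The ``only if'' direction.} I would run this, in the style of the proof of Lemma~\ref{lem:goodelem}, on the tiling of the closed half-plane $\overline{R(\gamma_\st)}=\{z\in\h:\Rea z\geq 0\}$ by the $\Gamma$-translates of $B$. First I would observe that no tile crosses $i\R_{>0}$: by \eqref{propBB} every side of $B$ is a $\Gamma$-translate of $i\R_{>0}$, so $i\R_{>0}\subseteq\base(C)=\Gamma.\partial B$, which by \eqref{propBC} misses every translate of $B^\circ$; as the tiles cover $\h$, this gives $\bigcup\{g.B\mid g.B\subseteq\overline{R(\gamma_\st)}\}=\overline{R(\gamma_\st)}$, and the dual graph $\mathcal T$ of this tiling (edges $=$ shared side-arcs, the boundary arc $i\R_{>0}$ unshared) is a locally finite tree, which I root at $B$; the $\mathcal T$-neighbours of $B$ are $g_1^{-1}.B,\dots,g_{q-1}^{-1}.B$, glued across the sides $(g_k^{-1}.0,g_k^{-1}.\infty)_\gamma=g_k^{-1}.(i\R_{>0})$, $k=1,\dots,q-1$. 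Now let $h$ be as in \eqref{Ci}. If $\{h.0,h.\infty\}=\{0,\infty\}$ then $h\in\{\id,S\}$ and $h.\infty>h.0$ forces $h=\id$ ($\ell=0$); otherwise $h.(i\R_{>0})$ is an arc over $[h.0,h.\infty]\subsetneq[0,\infty]$, disjoint from $B^\circ$ and meeting $\{\Rea z>0\}$, so $h.B\subseteq\overline{R(\gamma_\st)}$ and the tree-distance $d(h.B)$ is $\geq 1$. Let $k_1\in\{1,\dots,q-1\}$ be the index of the first edge on the $\mathcal T$-geodesic from $B$ to $h.B$; removing that edge leaves $h.B$ in the component through $g_{k_1}^{-1}.B$, which tiles the closed half-disk $\overline{\Delta_{k_1}}$ cut off by $(g_{k_1}^{-1}.0,g_{k_1}^{-1}.\infty)_\gamma$ away from $B^\circ$, whence $[h.0,h.\infty]\subseteq[g_{k_1}^{-1}.0,g_{k_1}^{-1}.\infty]$; and $h.0=g_{k_1}^{-1}.\infty$ is excluded by $h.0<h.\infty\leq g_{k_1}^{-1}.\infty$, so $h.0\in[g_{k_1}^{-1}.0,g_{k_1}^{-1}.\infty)$, in accordance with (iii). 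The decisive point is that $g_{k_1}$ maps $(g_{k_1}^{-1}.0,g_{k_1}^{-1}.\infty)_\gamma$ to $i\R_{>0}$ and $\overline{\Delta_{k_1}}$ isometrically onto $\overline{R(\gamma_\st)}$ -- the \emph{correct} half-plane, because $g_{k_1}.(g_{k_1}^{-1}.B)=B\subseteq\overline{R(\gamma_\st)}$ -- hence sends the branch of $\mathcal T$ beyond the edge joining $B$ and $g_{k_1}^{-1}.B$ onto all of $\mathcal T$; so $h':=g_{k_1}h$ again satisfies \eqref{Ci} (monotonicity of $g_{k_1}$ on $[g_{k_1}^{-1}.0,g_{k_1}^{-1}.\infty]$) and $d(h'.B)=d(h.B)-1$. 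Induction on $d$ then unfolds $h=g_{k_1}^{-1}h'=\cdots=g_{k_1}^{-1}\cdots g_{k_\ell}^{-1}$, the base case $d(h.B)=0$ being $h.B=B$, i.e.\ $h\in\{g\in\Gamma:g.B=B\}=\langle U\rangle$ (any element fixing $B$ permutes its sides and is then seen to lie in $\langle U\rangle$), of which $\id$ is the only member with $h.\infty>h.0\geq 0$. Uniqueness of $\ell$ and of $k_1,\dots,k_\ell$ is immediate from the ``if'' direction.

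\emph{Where the difficulty lies.} I expect the real work to be concentrated in the ``only if'' direction and in its termination step: extracting the tiling--tree picture for $\overline{R(\gamma_\st)}$ from properties \eqref{propBA}--\eqref{propBD} (no tile straddles $i\R_{>0}$, $\mathcal T$ is a tree, $\langle U\rangle$ is the whole $\Gamma$-stabiliser of $B$), and checking that $g_{k_1}$ genuinely carries $\overline{\Delta_{k_1}}$ onto $\overline{R(\gamma_\st)}$ rather than onto the opposite half-plane, so that the $\mathcal T$-distance really drops by one. Tightly entangled with this is verifying that all boundary cases -- an endpoint of $h.(i\R_{>0})$ being a vertex of $B$, or $0$, or $\infty$ -- land on the intended side of the half-open intervals in (iii)--(v), especially after the exchange $h\leftrightarrow hS$ used to deduce \eqref{Cii} and \eqref{Cv} from \eqref{Ci}; pinning down those conventions, rather than any single estimate, is the main obstacle.
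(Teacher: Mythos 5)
Your overall strategy is the same as the paper's: the paper's proof is only a two-sentence sketch (``$h$ is determined by $(h.0,h.\infty)$, the arc $h.(i\R_{>0})$ is a side of $h.B$, now induct on the side-pairing structure, see the figures''), and you supply exactly that induction, made precise via the dual tree of the tiling of $\overline{R(\gamma_\st)}$ by $\Gamma$-translates of $B$. The parts you actually carry out are sound: the ``if'' direction and the first-letter rule in case \eqref{Ci} via nonnegativity and monotonicity of the $g_k^{-1}$, the treatment of (iv), the tree-distance induction for existence (including the check that $g_{k_1}$ carries $\overline{\Delta_{k_1}}$ to the correct half-plane), and the reduction of \eqref{Cii} and \eqref{Cv} to \eqref{Ci} by right multiplication with $S$.

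The one genuine gap is precisely the boundary-case analysis you defer, and it is not mere bookkeeping: if you carry it out you will find that (iii) holds in case \eqref{Cii} with the interval $(g_{k_1}^{-1}.0,\,g_{k_1}^{-1}.\infty]$, not $[g_{k_1}^{-1}.0,\,g_{k_1}^{-1}.\infty)$. Indeed, writing $h=wS$ with $w=g_{k_1}^{-1}\cdots g_{k_\ell}^{-1}$ as in your reduction, your case-\eqref{Ci} rule gives $h.\infty=w.0\in[g_{k_1}^{-1}.0,g_{k_1}^{-1}.\infty)$, while $h.0=w.\infty=g_{k_1}^{-1}.(g_{k_2}^{-1}\cdots g_{k_\ell}^{-1}.\infty)$ lies in $(g_{k_1}^{-1}.0,g_{k_1}^{-1}.\infty]$, with the right endpoint attained whenever $g_{k_2}^{-1}\cdots g_{k_\ell}^{-1}$ fixes $\infty$ (e.g.\ $\ell=1$, or $k_2=\cdots=k_\ell=1$). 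A concrete instance is $h=g_{q-1}^{-1}S=U^{q-1}$: here $h.0=g_{q-1}^{-1}.\infty=g_{q-2}^{-1}.0$ and $h.\infty=0$, so $h$ is in case \eqref{Cii} with $k_1=q-1$, yet $h.0\in[g_{q-2}^{-1}.0,g_{q-2}^{-1}.\infty)$, so the rule of (iii) as stated would return $k_1=q-2$. (For $q=3$ this is $h=\textbmat{0}{1}{-1}{1}$, $h.0=1$, $h.\infty=0$, $h=g_2^{-1}S$, but $1\in[g_1^{-1}.0,g_1^{-1}.\infty)=[1,\infty)$, and applying $g_1=T^{-1}$ instead of $g_2$ sends $h.\infty$ to $-1<0$, leaving the allowed region.) So in finishing your proof you should either state and prove (iii) with the two opposite half-open conventions in the two cases, or observe that the subdivision points $g_k^{-1}.\infty$ are cuspidal and therefore never arise as $h.0$ in the situations where the rule is actually invoked (Algorithm~\ref{algo_hecke} only uses it after reducing to case \eqref{Ci}, and Algorithm~\ref{redalgo} applies it to fixed points of hyperbolic elements, which are non-cuspidal). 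Parts \eqref{Ci}, \eqref{Cii}, (iv), \eqref{Cv} and the uniqueness assertions are unaffected.
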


\begin{proof}
Since $G$ acts two-point simple transitively on $\h$, or equivalently, $G$ acts simple transitively on the space of geodesics on $\h$, the element $h\in\Gamma$ is uniquely determined by the ordered pair $(h.0, h.\infty)$. By \eqref{propBB}, the geodesic arc 
\[
I = (h.0,h.\infty)_\gamma = h.(i\R_{>0})
\]
is a side of $h.B$. Taking advantage of our precise knowledge of the structure of the sides of $B$, a straightforward induction using the side-pairing properties proves this proposition, see Figures~\ref{next1}-\ref{next2}.
\end{proof}

\begin{center}
\begin{figure}[h]
\includegraphics{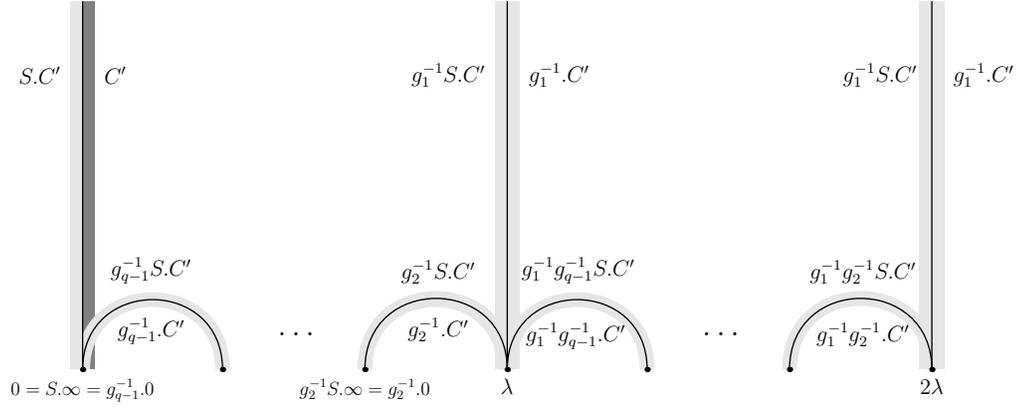} 
\caption{Examples for $\Gamma$-translates of $C'$.}\label{next1}
\end{figure}
\end{center}
\begin{center}
\begin{figure}[h]
\includegraphics{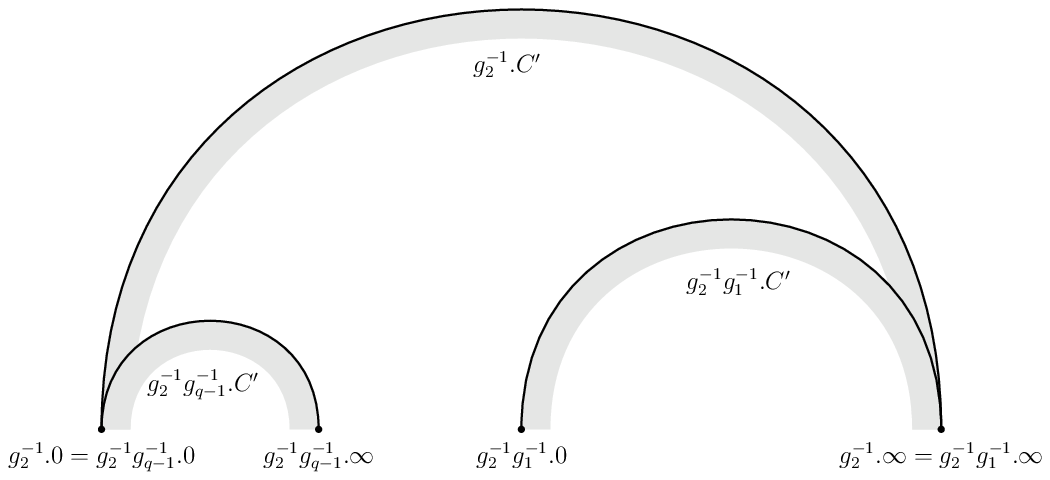} 
\caption{More examples for $\Gamma$-translates of $C'$.}\label{next2}
\end{figure}
\end{center}

\begin{cor}\label{cor:preperiod}
Let $\wh\gamma\in \PGeo(\Gamma)$ and suppose that $\gamma_0$ is a lift of $\wh\gamma$ with $\gamma_0(\infty)>0$. Iterativately for $i=1,2,3,\ldots$ set 
\[
\gamma_i\sceq g_{k_i}.\gamma_{i-1}
\]
where $k_i$ is the unique element in $\{1,\ldots, q-1\}$ such that $\gamma_{i-1}(\infty)\in (g_{k_i}^{-1}.0, g_{k_i}^{-1}.\infty)$.
Then there is $n\in\N_0$ such that $\gamma_n$ is reduced. 
\end{cor}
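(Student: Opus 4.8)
The plan is to interpret the iteration geometrically as following the forward sequence of intersections of $\gamma_0$ with the $\Gamma$-translates of the cross section $C'$, and to use the fact that a geodesic passing through $C'$ is by definition reduced. First I would observe that, since $\gamma_0(\infty)>0$, the point $\gamma_0(\infty)$ lies in $(0,\infty)$, and as long as the relevant $\gamma_{i-1}(\infty)$ remains in $D_0$ (or more generally in $(0,\infty)$), the element $k_i$ in the statement is well-defined by $\apref{Lemma~}{lem:intersect}{\eqref{intersect3}}$: it is exactly the index such that the next intersection of a geodesic through $C'$ with direction towards $\gamma_{i-1}(\infty)$ lies on $g_{k_i}^{-1}.C'$. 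The key point is that $\gamma_i = g_{k_i}.\gamma_{i-1}$ translates that next intersection back into $C'$ itself. Concretely, I would set up the correspondence with $\apref{Lemma~}{lem:goodelem}{}$: applying it to the non-cuspidal pair $(\gamma_0(\infty),\gamma_0(-\infty))$ with $\gamma_0(\infty)>0$ produces $h_0\in\Gamma$ with $\infty>h_0.0\ge 0$ and with $\gamma_0(\infty)\,\overline{\in}\,R(h_0.\gamma_\st)$, $\gamma_0(-\infty)\,\overline{\in}\,L(h_0.\gamma_\st)$; this means the geodesic $h_0^{-1}.\gamma_0$ intersects $C'$, so after replacing $\gamma_0$ by a bounded number of iteration steps we may reduce to the case where $\gamma_0$ itself already intersects $C'$. (Alternatively, one shows directly by induction via $\apref{Proposition~}{distrC}{}$ that $\gamma_n(\infty)$ stays in $(0,\infty)$ and that the composition $g_{k_n}\cdots g_{k_1}$ is an initial segment of the word for the relevant element $h_0^{-1}$.)

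The main engine is then the following: once $\gamma_0$ intersects $C'$, say $\gamma_0'(t_0)\in C'$, $\apref{Lemma~}{lem:intersect}{\eqref{intersect3}}$ says the next intersection with $C=\Gamma.C'$ lies on $g_k^{-1}.C'$ where $k$ is determined precisely by $\gamma_0(\infty)\in(g_k^{-1}.0,g_k^{-1}.\infty)$ — i.e.\ $k=k_1$ in the notation of the corollary. Hence $\gamma_1=g_{k_1}.\gamma_0$ has the property that $\gamma_1'(g_{k_1}.t_1)\in g_{k_1}g_{k_1}^{-1}.C'=C'$, so $\gamma_1$ intersects $C'$, and $\gamma_1(\infty)=g_{k_1}.\gamma_0(\infty)>0$ because $\gamma_1$ intersecting $C'$ forces it to be reduced, hence $\gamma_1(\infty)>0>\gamma_1(-\infty)$. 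Iterating, each $\gamma_i$ intersects $C'$, hence each $\gamma_i$ is reduced — in particular $\gamma_i(\infty)>0$, which keeps the iteration well-defined at every stage. This already shows (with $n$ as small as $0$ or $1$, after the $\apref{Lemma~}{lem:goodelem}{}$ reduction) that some $\gamma_n$ is reduced; to match the literal statement of the corollary, where $\gamma_0$ is only assumed to satisfy $\gamma_0(\infty)>0$, one argues that finitely many iteration steps bring us into the situation $\gamma_n'(\text{some }t)\in C'$. That finiteness is exactly $\apref{Lemma~}{lem:goodelem}{}$ together with $\apref{Proposition~}{distrC}{\eqref{Ci}}$: writing the element $h_0$ produced by $\apref{Lemma~}{lem:goodelem}{}$ (with $h_0.\infty>h_0.0\ge 0$) as $h_0=g_{k_1}^{-1}\cdots g_{k_\ell}^{-1}$, the first $\ell$ steps of the iteration peel off $g_{k_1},\dots,g_{k_\ell}$ one at a time and produce $\gamma_\ell$ with $\gamma_\ell'(t)\in C'$, hence reduced; so $n\le\ell$ works.

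The step I expect to be the main obstacle is the bookkeeping that shows the iteration indices $k_i$ chosen in the corollary (via the interval condition $\gamma_{i-1}(\infty)\in(g_{k_i}^{-1}.0,g_{k_i}^{-1}.\infty)$) are exactly the letters occurring in the word for $h_0$ from $\apref{Lemma~}{lem:goodelem}{}$, read in the correct order, and that after stripping one letter the remaining element $g_{k_2}^{-1}\cdots g_{k_\ell}^{-1}$ still satisfies the hypotheses of $\apref{Proposition~}{distrC}{\eqref{Ci}}$ so that the induction can continue. This is where the precise geometry of the polyhedron $B$, its side-pairings, and property~\eqref{propBD} of $C$ enter; I would handle it by an induction on $\ell$, using at the inductive step that $g_{k_1}.\gamma_0$ again has positive forward endpoint and that $g_{k_1}h_0 = g_{k_2}^{-1}\cdots g_{k_\ell}^{-1}$ still maps $\{0,\infty\}$ into $\R_{\ge0}\cup\{\infty\}$ with the forward endpoint larger, so $\apref{Proposition~}{distrC}{}$ applies verbatim. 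Everything else — that intersecting $C'$ is equivalent to being reduced, and that the translate $g_{k_i}$ realizes the next intersection — is immediate from the cited lemmas, so no further calculation is needed.
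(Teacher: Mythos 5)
Your proposal is correct and follows essentially the same route as the paper: Lemma~\ref{lem:goodelem} produces the group element, Proposition~\ref{distrC} decomposes it as a word $g_{k_1}^{-1}\cdots g_{k_n}^{-1}$, and an induction shows the iteration strips off these letters in order, ending at the reduced geodesic $\gamma_n=g.\gamma_0$. The only cosmetic difference is that the paper justifies the bookkeeping step you flag as the main obstacle (that the interval condition selects the first letter $k_1$, and then inductively the subsequent ones) by observing that each $g_k^{-1}$ acts contracting on $(0,\infty)$, so that $\big(g^{-1}.0,\,g^{-1}.\infty\big)\subseteq\big(g_{k_1}^{-1}.0,\,g_{k_1}^{-1}.\infty\big)$, rather than by re-invoking the cross-section machinery of Lemma~\ref{lem:intersect}.
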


\begin{proof}
Lemma~\ref{lem:goodelem} yields $g\in\Gamma$ such that 
\[
 0\leq g^{-1}.0 < \gamma_0(\infty) < g^{-1}.\infty \quad\text{and}\quad \gamma_0(-\infty)\notin (g^{-1}.0, g^{-1}.\infty)
\]
By Proposition~\ref{distrC},
\[
 g^{-1} = g_{k_1}^{-1}\cdots g_{k_n}^{-1}
\]
for unique elements $n\in\N_0$, $k_1,\ldots, k_n\in\{1,\ldots, q-1\}$. Each of the $g_{k_j}^{-1}$ acts contracting on $(0,\infty)$. Thus,
\[
 \gamma_0(\infty) \in \big(g_{k_1}^{-1}\cdots g_{k_n}^{-1}.0, g_{k_1}^{-1}\cdots g_{k_n}^{-1}.\infty \big) \subseteq \big(g_{k_1}^{-1}.0, g_{k_1}^{-1}.\infty \big).
\]
By induction we see that $g_{k_j}$ is just the $j$-th acting element in the algorithm in the statement of this corollary.  Thus, $\gamma_n = g.\gamma_0$ and 
\[
 \gamma_n(\infty) > 0 > \gamma_n(-\infty).
\]
Hence, $\gamma_n$ is reduced.
\end{proof}

We call a geodesic $\eta$ on $\h$  a \textit{reduced lift} of a geodesic $\wh\gamma$ on $X$ if $\eta$ is reduced and a lift of $\wh\gamma$. Recall from Sections~\ref{sec:relations}-\ref{sec:rel_hecke} that we consider two geodesics on $\h$ or on $X$ as equivalent if they coincide after a reparametrization. If $[\wh\gamma]\in \PGeo(\Gamma)/_\sim$ and $\gamma\in\UGeo$ is a reduced lift of $\wh\gamma$ then we call $[\gamma]\in\UGeo/_\sim$ a \textit{reduced lift} of $[\wh\gamma]$.

An element $[\wh\gamma]\in \PGeo(\Gamma)/_\sim$ might give rise to more that one reduced lift in $\UGeo/_\sim$. Given any element $[\wh\gamma]\in\PGeo(\Gamma)$, Corollary~\ref{cor:period} below determines all reduced lifts of $[\wh\gamma]$ from any given reduced lift. 
%

\begin{cor}\label{cor:period}
Let $[\wh\gamma]\in \PGeo(\Gamma)/_\sim$ and suppose that $[\gamma_0]\in\UGeo/_\sim$ is one of its reduced lifts. Iteratively for $i=1,2,3,\ldots$ set $[\gamma_i]\sceq [g_{k_i}.\gamma_{i-1}]$ where $k_i$ is the unique element in $\{1,\ldots, q-1\}$ such that $\gamma_{i-1}(\infty)\in (g_{k_i}^{-1}.0, g_{k_i}^{-1}.\infty)$. Then there is $n\in\N$ such that $[\gamma_n]=[\gamma_0]$. If $n$ is chosen to be minimal with this property, then $\mc R\sceq \{[\gamma_0],\ldots, [\gamma_{n-1}]\}$ contains all reduced lifts of $[\wh\gamma]$ and its elements are pairwise distinct. Moreover,
\[
 h=g_{k_1}^{-1}\cdots g_{k_n}^{-1}
\]
is the \textit{primitive} hyperbolic element associated to the pair $\big( \gamma_0(+\infty), \gamma_0(-\infty) \big) \in \FP(\Gamma_h)$ by the isomorphism $\mapeinseins$, and hence 
\[
 \mapeinsn^{-1}\big( \gamma_0(+\infty), \gamma_0(-\infty) \big) = \mapdrein^{-1}\big( (\gamma_0(+\infty), \gamma_0(-\infty), nt_h)\big)= h^n
\]
for all $n\in\N$.
\end{cor}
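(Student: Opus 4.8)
The plan is to transfer the whole statement to the cross section $\wh C$ and the discrete dynamics attached to it, so that the iteration $[\gamma_{i-1}]\mapsto[\gamma_i]$ becomes a transcription of the first return map on $\wh C$ restricted to the periodic geodesic $\wh\gamma$. First I would recall that by Lemma~\ref{lem:intersect}\eqref{intersect1} a reduced lift $\gamma_0$ of $\wh\gamma$ is precisely a lift whose velocity vector $\gamma_0'(s_0)\in C'$ for some $s_0$, and by Lemma~\ref{lem:intersect}\eqref{intersect3} the next intersection of $\gamma_0$ with $C=\Gamma.C'$ lies on $g_{k}^{-1}.C'$ with $k=k_1$ the unique index in $\{1,\dots,q-1\}$ determined by $\gamma_0(\infty)\in(g_{k_1}^{-1}.0,g_{k_1}^{-1}.\infty)$; equivalently $k_1=\interval(\gamma_0(\infty))$. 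Hence $g_{k_1}.\gamma_0$ is again a reduced lift of $\wh\gamma$, namely the lift whose distinguished $C'$-intersection is the image under $g_{k_1}$ of the next $C$-intersection of $\gamma_0$. Iterating, $[\gamma_i]$ is the reduced lift corresponding to the $i$-th forward return of $\wh\gamma$ to $\wh C$, and since $C'$ is a genuine set of representatives for $\wh C$ (Proposition~\ref{crosssection}), the equivalence class $[\gamma_i]$ is uniquely recovered from that return point. Therefore the map $[\gamma_i]\mapsto[\gamma_{i+1}]$ on the (finite, by Corollary~\ref{cor:preperiod} and discreteness) set of reduced lifts of $[\wh\gamma]$ is exactly the first return map of the geodesic flow on $\wh C$ read off along $\wh\gamma$, which is a bijection of this finite set; so the forward orbit of $[\gamma_0]$ is eventually periodic, and being a bijection it is purely periodic. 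Let $n\in\N$ be minimal with $[\gamma_n]=[\gamma_0]$; then $\mc R=\{[\gamma_0],\dots,[\gamma_{n-1}]\}$ is the full orbit and its elements are pairwise distinct. It remains to see that $\mc R$ contains \emph{every} reduced lift of $[\wh\gamma]$: any reduced lift $[\eta]$ of $[\wh\gamma]$ intersects $\wh C$, hence appears as some forward (or, using the bijectivity, some forward again) return point along $\wh\gamma$, i.e.\ $[\eta]=[\gamma_j]$ for some $j$, and then $j$ may be reduced mod $n$.

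Next I would identify the hyperbolic element. Write $h\sceq g_{k_1}^{-1}\cdots g_{k_n}^{-1}$. By construction $\gamma_n=(g_{k_n}\cdots g_{k_1}).\gamma_0=h^{-1}.\gamma_0$ and $[\gamma_n]=[\gamma_0]$, so $h^{-1}.\gamma_0$ is a reparametrization of $\gamma_0$; hence $h$ (equivalently $h^{-1}$) fixes $\gamma_0(+\infty)$ and $\gamma_0(-\infty)$, so $h\in\Gamma_h$ with $\{w_a(h),w_r(h)\}=\{\gamma_0(\pm\infty)\}$. Because each $g_{k}^{-1}$ acts as a contraction on $(0,\infty)$ (as used in the proof of Corollary~\ref{cor:preperiod}) and $\gamma_0$ is reduced, one checks that $h^{-1}.\gamma_0=\gamma_0(\cdot+t_0)$ with $t_0>0$, i.e.\ $h$ translates $\gamma_0$ in the positive direction, so $w_a(h)=\gamma_0(+\infty)$ and $w_r(h)=\gamma_0(-\infty)$; thus $\mapeins(h)=(\gamma_0(+\infty),\gamma_0(-\infty))$. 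To see that $h$ is \emph{primitive}: any $h'\in\Gamma$ with $\mapeins(h')=\mapeins(h)$ and positive translation direction is a positive power of the primitive generator $h_0$ of the (cyclic, by discreteness of $\Gamma$) stabilizer of the pair $(\gamma_0(+\infty),\gamma_0(-\infty))$, say $h=h_0^r$. If $r\ge 2$ then $h_0^{-1}.\gamma_0$ would be a reduced lift equivalent to one of the $[\gamma_i]$ with $0<i<n$ reached after strictly fewer than $n$ steps of the return map — contradicting minimality of $n$, since the number of forward returns of $\wh\gamma$ to $\wh C$ within one primitive period equals the length of the cycle $\mc R$. Hence $r=1$ and $h=h_0$ is primitive, so $h=\mapeinseins^{-1}\big(\gamma_0(+\infty),\gamma_0(-\infty)\big)$.

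Finally, the displayed identities are formal consequences of the machinery in Sections~\ref{sec:relations}--\ref{sec:rel_hecke}. Since $h$ is primitive hyperbolic, $h^n\in\Gamma_{h,n}$ for every $n\in\N$, and $\mapeinsn(h^n)=\mapeins(h^n)=\mapeins(h)=(\gamma_0(+\infty),\gamma_0(-\infty))$, which gives $\mapeinsn^{-1}(\gamma_0(+\infty),\gamma_0(-\infty))=h^n$. For the other equality, Proposition~\ref{displacement}\eqref{displaceindep} gives $t_{h^n}=nt_h$, and $\mapdrein(h^n)=(w_a(h^n),w_r(h^n),t_{h^n})=(\gamma_0(+\infty),\gamma_0(-\infty),nt_h)$ by Theorem~\ref{mmg}; applying $\mapdrein^{-1}$ yields $\mapdrein^{-1}\big((\gamma_0(+\infty),\gamma_0(-\infty),nt_h)\big)=h^n$, as claimed.

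I expect the main obstacle to be the primitivity assertion: pinning down that the cycle length $n$ of the reduction iteration is exactly the number of returns of $\wh\gamma$ to $\wh C$ in one primitive period, and hence that a nontrivial power $h=h_0^r$ would force a shorter cycle. This needs a clean statement that distinct reduced lifts of $[\wh\gamma]$ biject with the $C'$-intersections of a single primitive lift of $\wh\gamma$ along one period, which in turn rests on Lemma~\ref{lem:intersect}\eqref{intersect2} (each geodesic meets $C'$ at most once) together with the cross-section property of $\wh C$; the remaining verifications (contraction of the $g_k^{-1}$ on $(0,\infty)$, positivity of the translation length, the diagram-chase identities) are routine given the results already established.
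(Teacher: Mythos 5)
Your overall route is the paper's: pass to the cross section $\wh C$, use Lemma~\ref{lem:intersect} to identify the iteration $[\gamma_{i-1}]\mapsto[\gamma_i]$ with the first-return map read off along $\wh\gamma$, conclude that the reduced lifts of $[\wh\gamma]$ correspond exactly to the finitely many intersection points $\wh v_0,\dots,\wh v_{n-1}$ in one primitive period, and read off $h$ as the word $g_{k_1}^{-1}\cdots g_{k_n}^{-1}$. The paper realizes this slightly more concretely (it constructs explicit lifts $\eta_j$ from the representatives $v_j\in C'$ and proves $\gamma_j=\eta_j$ inductively), whereas you argue pure periodicity abstractly from bijectivity of the return map on a finite set; that is a legitimate shortcut and the substance is the same.

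Two local steps do not close as written. First, the sentence ``$h^{-1}.\gamma_0=\gamma_0(\cdot+t_0)$ with $t_0>0$, i.e.\ $h$ translates $\gamma_0$ in the positive direction'' is internally inconsistent: if $h^{-1}$ shifts the parameter by $+t_0$ then $h$ shifts it by $-t_0$, and with $t_0>0$ Proposition~\ref{displacement} would give $w_a(h)=\gamma_0(-\infty)$, the opposite of what you need. The correct statement (which the paper computes by tracking tangent vectors) is $h.\gamma_0'(0)=\gamma_0'(t_p)$ with $t_p>0$ the primitive period, i.e.\ $h.\gamma_0=\gamma_0(\cdot+t_p)$. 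Alternatively, your contraction remark does the job directly: $h.(0,\infty)\subseteq(g_{k_1}^{-1}.0,g_{k_1}^{-1}.\infty)\subsetneq(0,\infty)$, so the unique fixed point of $h$ lying in $(0,\infty)$, namely $\gamma_0(+\infty)$, must be the attracting one. Second, the primitivity contradiction fails as stated: $h_0^{-1}.\gamma_0$ has the same endpoints as $\gamma_0$ and is therefore equivalent to $[\gamma_0]$ itself, not to some $[\gamma_i]$ with $0<i<n$, so no clash with the cycle arises that way. The short correct argument is via periods: the displacement of $h$ equals one primitive period $t_p$ of $\wh\gamma$ (this is exactly the bijection between reduced lifts and intersections in one primitive period that you correctly identify as the crux), so if $h=h_0^r$ with $r\ge 2$ then $h_0.\gamma_0=\gamma_0(\cdot+t_p/r)$ and $\wh\gamma$ would have period $t_p/r<t_p$, contradicting minimality of $t_p$. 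With these two repairs the proof is complete.
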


\begin{proof}
In order to prove that $\mc R=\{[\gamma_0],\ldots, [\gamma_{n-1}]\}$ is the set of reduced lifts for $[\wh\gamma]$ and that its elements are pairwise distinct, we first construct all reduced lifts for $[\wh\gamma]$ and then show that they equal the elements of $\mc R$. 

Fix a representative $\wh\gamma\in\PGeo(\Gamma)$ of $[\wh\gamma]$ and let $(\wh v_j)_{j\in\Z} \in\wh C^\Z$ be the sequence of intersections of $\wh\gamma$ and $\wh C$. Note that $(\wh v_j)_{j\in\Z}$ does not depend on the choice of $\wh\gamma$. As $\wh\gamma$ is a periodic geodesic, this sequence is periodic as well. Let $n\in\N$ be the minimal period length. Hence 
\[
 (\wh v_0, \ldots, \wh v_{n-1})
\]
is a minimal period of $(\wh v_j)_{j\in\Z}$. Choosing a suitable reparametrization of $\wh\gamma$, we may assume that 
\begin{equation}\label{normalization}
 \wh\gamma'(0) = \wh v_0.
\end{equation}
Let 
\[
 t_{n-1}> t_{n-2} > \cdots > t_1 > t_0 = 0
\]
be the corresponding minimal nonnegative intersection times of $\wh\gamma$ with $\wh C$, that is, $\wh\gamma'(t_j) = \wh v_j$ and each $t_j$ is minimal  with this property. Recall that $\pi\vert_{C'}$ is a bijection between $C'$ and $\wh C$, and let  
\[
 v_j \sceq \big(\pi\vert_{C'}\big)^{-1}(\wh v_j),\qquad j=0,\ldots, n-1.
\]
For $j\in\{0,\ldots, n-1\}$ let $\eta_j$ be the geodesic given by
\[
 \eta_j(t) \sceq \gamma_{v_j}(t-t_j).
\]
From
\[
 \eta_j'(t_j) = \gamma_{v_j}'(0) = v_j 
\]
and the normalization \eqref{normalization} of $\wh\gamma$ it follows that $\eta_j$ is a lift of $\wh\gamma$. Since $v_j\in C'$, the geodesic $\eta_j$ is reduced. Moreover, since the elements $v_0,\ldots, v_{n-1}$ are pairwise distinct, Lemma~\ref{lem:intersect}\eqref{intersect2} yields that any two $\eta_j$ are non-equivalent. 

Let $t_p$ be the minimal period length of $\wh\gamma$, and let $\gamma$ be any reduced lift of $\wh\gamma$. Thus $\gamma$ intersects $C'$. Since $\{v_0,\ldots, v_{n-1}\}$ are all possible intersections, we have
\[
 \gamma(t) = \gamma_{v_{j_0}}(t - t_{j_0} + mt_p)  = \eta_{j_0}(t+mt_p)
\]
for some $m\in\Z$ and some $j_0\in\{0,\ldots, n-1\}$. This means that $\gamma$ is equivalent to $\eta_{j_0}$. Thus, the elements $[\eta_0],\ldots,[\eta_{n-1}]$ are pairwise distinct and  $\mc S\sceq \{[\eta_0],\ldots, [\eta_{n-1}]\}$ is the set of reduced lifts of $[\wh\gamma]$.

For the proof that $\mc R=\mc S$  we may assume that 
\[
\gamma_0 = \gamma_{v_0} = \eta_0.
\] 
Since the next intersection of $\wh\gamma$ with $\wh C$ is $\wh v_1 = \wh\gamma'(t_1)$, the next intersection of $\gamma_0$ with $C$ is $\gamma_0'(t_1)$. Lemma~\ref{lem:intersect}\eqref{intersect3} shows that 
\[
\gamma'_0(t_1) = g_{k_1}^{-1}.v_1 \in g_{k_1}^{-1}.C'
\]
for the unique $k_1\in\{1,\ldots, q-1\}$ such that $\gamma_0(\infty)\in (g_{k_1}^{-1}.0, g_{k_1}^{-1}.\infty)$. Further, $\gamma_1 = g_{k_1}.\gamma_0$ is a reduced lift of $\wh\gamma$ with $\gamma_1'(t_1) = v_1$. Thus, $\gamma_1 = \eta_1$. Repeating this argument iteratively for $j=1,\ldots, n-2$ yields $\gamma_2=\eta_2, \ldots, \gamma_{n-1}=\eta_{n-1}$.

For the proof of the statements on the element $h$, we observe that 
\begin{align*}
 \gamma_0'(0) &= v_0,\quad \gamma_0'(t_1) = g_{k_1}^{-1}.v_1, \quad \gamma_0'(t_2) = g_{k_1}^{-1}g_{k_2}^{-1}.v_2, \quad \ldots,
\\
 \gamma_0'(t_{n-1}) &= g_{k_1}^{-1}\cdots g_{k_{n-1}}^{-1}.v_{n-1},
\end{align*}
and finally (recall that $t_p$ is the minimal period length of $\wh\gamma$)
\[
 \gamma_0'(t_p) = g_{k_1}^{-1}\cdots g_{k_n}^{-1}.v_0 = g_{k_1}^{-1}\cdots g_{k_n}^{-1}.\gamma_0'(0) = h.\gamma_0'(0).
\]
Thus, $\gamma_0(mt_p) = h^m.\gamma_0(0)$ for any $m\in\Z$. It follows that
\[
 \gamma_0(+\infty) = \lim_{m\to\infty} h^m.\gamma_0(0) = w_a(h) 
\]
and
\[
 \gamma_0(-\infty) = \lim_{m\to-\infty} h^m.\gamma_0(0) = w_r(h).
\]
Hence $h$ is hyperbolic and $\mapeins(h) = \big( \gamma_0(+\infty), \gamma_0(-\infty) \big)$. Since $n$ is minimal, $h$ is primitive. The remaining statements follow immediately from the definitions of $\mapeinsn$ and $\mapdrein$.
\end{proof}

\begin{thm}\label{redprimitive}
Let $F\in \POQF(\Gamma_h)$. Then Algorithm~\ref{redalgo} finds all reduced $\Gamma$-forms that are equivalent to $F$. It first converts $F$ into an equivalent reduced $\Gamma$-form and then produces a cycle of pairwise distinct reduced $\Gamma$-forms which are all equivalent to $F$. Moreover, it determines the level of $F$ and the associated hyperbolic element in $\Gamma$.
\end{thm}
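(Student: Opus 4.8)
The plan is to transport the algorithm, line by line, into the geometric picture of Sections~\ref{sec:relations}--\ref{sec:rel_hecke} via the $G$-equivariant bijections of Section~\ref{summaps}, and then to read off the assertions from Corollaries~\ref{cor:preperiod} and~\ref{cor:period}. First I would fix the dictionary. Put $[\gamma]\sceq\mgf^{-1}\big(\mapzwei(F)\big)\in\UGeo/_\sim$. Since $F\in\POQF(\Gamma_h)$ and $\mapeins\circ\mapvier=\mapzwei$, the endpoints $\gamma(\pm\infty)$ are the fixed points of the hyperbolic element $\mapvier(F)\in\Gamma_h$; hence $\wh\gamma\sceq\pi(\gamma)$ is periodic and $[\gamma]$ is a lift of $[\wh\gamma]$ (Proposition~\ref{allisos}). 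Let $n_0$ denote the level of $F$. Every element applied to a form in the course of the algorithm is $S$ or some $g_k=(U^kS)^{-1}$, hence lies in $\Gamma$; since the $\Gamma$-action on $\POQF$ restricts to $\POQF(\Gamma_{h,n})$ for every $n$ (Section~\ref{sec:rel_hecke}), every form $F_j$ produced by the algorithm lies in $\POQF(\Gamma_{h,n_0})$, on which $\mapzwei$ is injective. Setting $[\gamma_j]\sceq\mgf^{-1}(x_j,y_j)$, so that $(x_j,y_j)=\big(\gamma_j(+\infty),\gamma_j(-\infty)\big)$, the $G$-equivariance of $\mapzwei$ and $\mgf$ turns the update $F_{j+1}=g_k.F_j$ into $[\gamma_{j+1}]=[g_k.\gamma_j]$ and the update $F_1=S.F_0$ (performed precisely when $x_0<0$) into $[\gamma_1]=[S.\gamma_0]$, and by the characterisation of reduced forms in terms of reduced geodesics, $F_j$ is reduced if and only if $[\gamma_j]$ is reduced. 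Moreover $\gamma_j(\pm\infty)$ is never cuspidal whereas every point $g_k^{-1}.\infty$ is cuspidal, so $x_j\in D_0$, and hence $\interval(x_j)$ is defined, whenever $x_j>0$.

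Next I would treat the preperiod and the period. As $x_0$ is real and nonzero (being non-cuspidal), either $x_0>0$ or, applying $S=\textbmat{0}{1}{-1}{0}$, $x_1=-1/x_0>0$; so the algorithm reaches an index $m_0\in\{0,1\}$ with $x_{m_0}>0$, and $\gamma_{m_0}$ is a lift of $\wh\gamma$ with positive attracting endpoint. The ensuing iteration $[\gamma_{j+1}]=[g_{\interval(x_j)}.\gamma_j]$ is exactly the one of Corollary~\ref{cor:preperiod} (read on equivalence classes), which therefore yields an index $j\geq m_0$ with $[\gamma_j]$, equivalently $F_j$, reduced; hence the preperiod loop terminates, its exit value $m$ is the least index with $F_m$ reduced, and $F_0,\ldots,F_{m-1}$ are non-reduced. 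Now apply Corollary~\ref{cor:period} with the reduced lift $[\gamma_m]$ in the role of $[\gamma_0]$: there is a minimal $p\in\N$ with $[\gamma_{m+p}]=[\gamma_m]$, the classes $[\gamma_m],\ldots,[\gamma_{m+p-1}]$ are pairwise distinct, and $\mc R\sceq\{[\gamma_m],\ldots,[\gamma_{m+p-1}]\}$ is the set of \emph{all} reduced lifts of $[\wh\gamma]$. Since $\mgf$ and $\mapzwei|_{\POQF(\Gamma_{h,n_0})}$ are injective, the condition $[\gamma_{m+p}]=[\gamma_m]$ is equivalent to $(x_{m+p},y_{m+p})=(x_m,y_m)$ while the intermediate pairs are pairwise distinct, so the period loop terminates exactly at $j=m+p$ and returns this $p$. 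Transporting $\mc R$ through the $\Gamma$-equivariant bijection $\mgf^{-1}\circ\mapzwei$ on $\POQF(\Gamma_{h,n_0})$, which carries the equivalence class of $F$ onto the set of equivalence classes of lifts of $[\wh\gamma]$ and reduced forms onto reduced lifts, I obtain that $\{F_{m+1},\ldots,F_{m+p}\}=\{F_m,\ldots,F_{m+p-1}\}$ is precisely the set of reduced $\Gamma$-forms equivalent to $F$, that these forms are pairwise distinct, and that $F_m\to F_{m+1}\to\cdots\to F_{m+p}=F_m$ is the asserted cycle.

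Finally I would check the outputs on the level and the associated hyperbolic element. Let $c_1,\ldots,c_m\in\Gamma$ be the elements applied in the preperiod (so $F_i=c_i.F_{i-1}$, with $c_1=S$ when $x_0<0$). The updates of $h_j$ give $h_m=c_1^{-1}\cdots c_m^{-1}=(c_m\cdots c_1)^{-1}$, whereas the $G$-equivariance of $\mapvier$, whose action on $G_h$ is conjugation, gives $\mapvier(F_m)=(c_m\cdots c_1)\,\mapvier(F)\,(c_m\cdots c_1)^{-1}=h_m^{-1}\mapvier(F)h_m$. The products built in the period loop satisfy $a_p=g_{\interval(x_m)}^{-1}\cdots g_{\interval(x_{m+p-1})}^{-1}$, which is the element $h$ furnished by Corollary~\ref{cor:period} for $[\gamma_m]$; hence $a_p$ is the primitive hyperbolic element with fixed-point pair $\mapzwei(F_m)$. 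Since $\mapvier(F_m)\in\Gamma_{h,n_0}$ has the same fixed-point pair, injectivity of $\mapeins$ on $\Gamma_{h,n_0}$ forces $\mapvier(F_m)=a_p^{n_0}$, whence $\mapvier(F)=h_m a_p^{n_0}h_m^{-1}$. As $a_p$ is hyperbolic, its powers $a_p^n$, $n\in\N$, are pairwise distinct, so the final loop stops exactly at $n=n_0$, and the algorithm returns the correct level $n_0$ together with the correct associated hyperbolic element $h_m a_p^{n_0}h_m^{-1}=\mapvier(F)$. I expect the main difficulty to lie in this last bookkeeping: identifying the variables $h_j$ and $a_i$ with conjugating elements on the geometric side in the correct order of composition, and confirming that the two loop-termination tests detect exactly the first reduced form and the minimal period; the rest is a direct transport of Corollaries~\ref{cor:preperiod} and~\ref{cor:period}.
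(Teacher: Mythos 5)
Your proof is correct and follows essentially the same route as the paper: translate the algorithm into the geodesic picture via the equivariant bijections of Sections~\ref{sec:relations}--\ref{sec:rel_hecke}, invoke Corollary~\ref{cor:preperiod} for the preperiod and Corollary~\ref{cor:period} for the period and the primitive element, and then conjugate back by $h_m$. You supply somewhat more bookkeeping than the paper does (well-definedness of $\interval(x_j)$ via non-cuspidality, injectivity of $\mapzwei$ on $\POQF(\Gamma_{h,n_0})$ to justify the loop-termination tests, and the identification $\mapvier(F_m)=a_p^{n_0}$), all of which is accurate.
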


\begin{proof}
Suppose that $[\wh\gamma]\in\PGeo(\Gamma)/_\sim$ is the equivalence class of periodic geodesics on $X$ that is isomorphic to $F$.  Let $[\gamma_0]\in \UGeo/_\sim$ be the lift of $[\wh\gamma]$ such that 
\[
 \big(\gamma_0(+\infty), \gamma_0(-\infty)\big) = \mapzwei(F)
\]
and interpret Algorithm~\ref{redalgo} in terms of geodesics. Then Corollary~\ref{cor:preperiod} shows that the section `Calculation of preperiod' of Algorithm~\ref{redalgo} (lines 1--18) indeed yields a reduced lift of $[\wh\gamma]$ respectively a reduced $\Gamma$-form which is equivalent to $F$. More precisely, it shows that $h_m^{-1}.[\gamma_0]$ and $h_m^{-1}.F$ are reduced. Corollary~\ref{cor:period} proves that the section `Calculation of period' of Algorithm~\ref{redalgo} (lines 19--27) finds all reduced lifts of $[\wh\gamma]$ respectively all reduced $\Gamma$-forms equivalent to $F$. Moreover, if $n$ is the level of $F$, Corollary~\ref{cor:period} shows that $a_p^n$ is the hyperbolic element associated to $h_m^{-1}.F$. Hence, 
\[
h_ma_p^nh_m^{-1} = \mapvier(F).
\]
This completes the proof.
\end{proof}

\begin{thm}
Let $D>0$. Then there are only finitely many reduced $\Gamma$-forms with discriminant at most $D$.
\end{thm}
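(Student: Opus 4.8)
The plan is to transport the statement, via the bijection $\mapvier\colon\POQF\to G_h$ of Theorem~\ref{mqfm}, into a finiteness statement for hyperbolic elements of $\Gamma$, and then to deduce the latter from discreteness of $\Gamma$ by showing that reducedness together with the bound on the discriminant confines the relevant elements to a compact situation. For a $\Gamma$-form $F=[A,B,C,s]$ I write $g_F\sceq\mapvier(F)=\textmat{a}{b}{c}{d}\in\Gamma_h$ (taking a representing matrix) and let $D(F)\sceq B^2-4AC$ denote the discriminant of the underlying form. By Theorem~\ref{mqfm} one has $c=A$, $b=-C$ and $|a+d|=\sqrt{D(F)+4}$, and one extracts from the proof of Proposition~\ref{displacement}, together with Proposition~\ref{displacement}\eqref{displaceinv}, that $t_g=2\log\lambda_+(g)$, hence $2\cosh(t_g/2)=|a+d|$, for every $g\in G_h$. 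Consequently $D(F)\le D$ is equivalent to $t_{g_F}\le T$, where $T\sceq 2\Arcosh\!\big(\tfrac12\sqrt{D+4}\big)$ depends only on $D$.

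Next I would use reducedness to control the axis of $g_F$. If $F$ is reduced then, by Theorem~\ref{mqfm} and \eqref{defiformred}, $w_a(g_F)=x_a(F)>0>x_r(F)=w_r(g_F)$, so the axis of $g_F$ meets $i\R_{>0}$ in the single point $iy_F$ with $y_F=\sqrt{w_a(g_F)\cdot(-w_r(g_F))}$, and $\dist(iy_F,g_F.iy_F)=t_{g_F}$ by Proposition~\ref{displacement}. Since the zeros $x_\pm(F)$ are finite and of opposite sign, $A\ne 0$ and $C/A<0$, so $c=A$ and $b=-C$ are both non-zero. Because $T_q=\textmat{1}{\lambda}{0}{1}\in\Gamma$ and $\Gamma$ is discrete, Shimizu's lemma gives $|c|\ge\lambda^{-1}$; applied to $Sg_FS^{-1}=\textmat{d}{-c}{-b}{a}\in\Gamma$, whose fixed points are $-1/w_a(g_F)$ and $-1/w_r(g_F)$ (Lemma~\ref{fixedconj}), it likewise gives $|b|\ge\lambda^{-1}$. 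Combining these with $|a+d|=2\cosh(t_{g_F}/2)\le 2\cosh(T/2)$ and the identities $|w_a(g_F)-w_r(g_F)|=\sqrt{(a+d)^2-4}/|c|$ and $|1/w_a(g_F)-1/w_r(g_F)|=\sqrt{(a+d)^2-4}/|b|$ read off from \eqref{def_wa1}--\eqref{def_wr1}, I conclude that $w_a(g_F)$, $-w_r(g_F)$ and hence $y_F$ all lie in the fixed compact interval $I\sceq\big[(2\lambda\cosh(T/2))^{-1},\,2\lambda\cosh(T/2)\big]$.

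To finish I would invoke proper discontinuity of the $\Gamma$-action on $\h$. Let $K\sceq\{iy\mid y\in I\}$ and let $K'$ be its closed $T$-neighbourhood, both compact. Every reduced $F$ with $D(F)\le D$ produces an element $g_F\in\Gamma$ together with a point $p=iy_F\in K$ satisfying $\dist(p,g_F.p)\le T$, hence $g_F.p\in K'$ and $g_F.K'\cap K'\ne\emptyset$; by proper discontinuity only finitely many elements of $\Gamma$ have this property. Since $\mapvier$ is injective, only finitely many reduced $\Gamma$-forms have discriminant at most $D$.

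The step I expect to be the crux is the confinement of $y_F$ to the compact interval $I$: geometrically this amounts to ruling out that the axes of the hyperbolic elements in play run off into a cusp, and it is exactly here that discreteness of $\Gamma$ enters essentially (packaged above as Shimizu's lemma, which is available thanks to the explicit parabolic $T_q\in\Gamma$). An alternative route for this step, closer in spirit to the preceding sections, would be to note that a reduced $\Gamma$-form of level $n$ with discriminant at most $D$ corresponds to a reduced lift of a periodic geodesic of length at most $T$ together with an $n$-th power satisfying $nt_h\le T$, and then to combine the classical fact that a finite-area hyperbolic surface has only finitely many periodic geodesics of length $\le T$ with Corollary~\ref{cor:period}, which bounds the number of reduced lifts of each.
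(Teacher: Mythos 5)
Your proof is correct, but it follows a genuinely different route from the paper. The paper stays inside its own symbolic-dynamics framework: by Corollary~\ref{cor:period} every reduced $\Gamma$-form corresponds to a hyperbolic element of the form $g_{k_1}^{-1}\cdots g_{k_n}^{-1}$, and since each $g_k^{-1}$ has a nonnegative representative in $\SL_2(\R)$, a direct computation shows that the trace grows by a fixed amount $\delta>0$ with each letter $k\notin\{1,q-1\}$ and linearly (via the lower-left entry) along runs of $g_1^{-1}$ or $g_{q-1}^{-1}$; this bounds the admissible word length $n$ explicitly in terms of $D$ and hence the number of reduced forms, with no input beyond the coding itself. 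You instead argue geometrically: the discriminant bound is converted into a bound $t_{g_F}\le T$ on the displacement length (your identity $2\cosh(t_g/2)=|\tr g|$ is correct), reducedness forces the axis to cross $i\R_{>0}$, and Shimizu's lemma applied to $g_F$ and to $Sg_FS^{-1}$ pins the crossing point $iy_F$ into a compact interval, after which proper discontinuity of the $\Gamma$-action on $\h$ finishes the count; injectivity of $\mapvier$ transports this back to forms. All the individual computations you use ($c=A$, $b=-C$, the fixed-point difference $\sqrt{(a+d)^2-4}/|c|$, the conjugate $Sg_FS^{-1}=\textbmat{d}{-c}{-b}{a}$, the nonvanishing of $A$ and $C$ for reduced forms) check out. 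The trade-off: your argument imports two standard but external facts (Shimizu's lemma, which is where discreteness and the explicit parabolic $T_q$ enter, and proper discontinuity), and in exchange it is independent of the reduction algorithm and would apply verbatim to any lattice with a cusp of known width at $\infty$ and the same notion of reducedness; the paper's argument is self-contained, tied to the coding, and yields an explicit combinatorial bound on the number of reduced forms. Your closing remark---counting periodic geodesics of length at most $T$ and multiplying by the number of reduced lifts from Corollary~\ref{cor:period}---is a third valid variant, closest in spirit to the paper's geometric setup.
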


\begin{proof}
Let $F\in \POQF(\Gamma_h)$ and suppose that $g=\mapvier^{-1}(F)\in\Gamma_h$ is the associated hyperbolic element. Theorem~\ref{mqfm} shows that the discriminant of $F$ is
\[
 \discr(g) \sceq \tr(g)^2 - 4 = (a+d)^2 -4.
\]
Further, Corollary~\ref{cor:period} in combination with Proposition~\ref{allisos} yields that if $F$ is reduced then $g$ is of the form
\begin{equation}\label{standard}
 g= g_{k_1}^{-1}\cdots g_{k_n}^{-1}
\end{equation}
for unique elements $n\in\N_0$ and $k_1,\ldots, k_n\in\{1,\ldots, q-1\}$. Thus it suffices to show that there are only finitely many hyperbolic elements $g$ of the form \eqref{standard} with $\discr(g)\leq D$. For this, we will consider the growth of $\discr(g)$ as $n\to\infty$. 

Note that each element $g$ of the form \eqref{standard} is hyperbolic unless $g=g_1^{-m}$ or $g=g_{q-1}^{-m}$ for some $m\in\N_0$. For $k=1,\ldots, q-1$, let $h_k$ denote the representative of $g_k^{-1}$ in $\SL_2(\R)$ all of whose matrix entries are nonnegative (see \eqref{formulargk}). Suppose that 
\[
h=\textmat{a}{b}{c}{d}=h_{k_1} \cdots h_{k_n},
\]
but also allow $h$ to be non-hyperbolic (i.\,e., $h=h_1^m$ or $h=h_{q-1}^m$ is possible). Suppose $k\in\{1,\ldots, q-1\}$ such that $h h_k$ is hyperbolic. Then all entries of $h$ are nonnegative, $\tr(h)\geq 2$ and
\begin{align*}
 \tr(hg_k^{-1}) & = (a+b) \frac{\sin\left(\frac{k}{q}\pi\right)}{\sin\frac{\pi}{q}} + b \frac{\sin\left(\frac{k-1}{q}\pi\right)}{\sin\frac{\pi}{q}} + c\frac{\sin\left(\frac{k+1}{q}\pi\right)}{\sin\frac{\pi}{q}}
\\
& \geq \tr(h) \cdot \frac{\sin\left(\frac{k}{q}\pi\right)}{\sin\frac{\pi}{q}}
\\
& \geq \tr(h) + 2\left(\frac{\sin\left(\frac{k}{q}\pi\right)}{\sin\frac{\pi}{q}} - 1 \right).
\end{align*}
For $k\notin\{1,q-1\}$, 
\[
2\left(\frac{\sin\left(\frac{k}{q}\pi\right)}{\sin\frac{\pi}{q}} - 1 \right) \geq 2\left(\frac{\sin\left(\frac{2}{q}\pi\right)}{\sin\frac{\pi}{q}} - 1 \right) > 0,
\]
which shows that the trace of $h$ increases by at least
\[
 \delta\sceq 2\left(\frac{\sin\left(\frac{2}{q}\pi\right)}{\sin\frac{\pi}{q}} - 1 \right).
\]
Suppose that $k=1$ and let $m\in\N$. Then 
\begin{equation}\label{k1}
 \tr(hh_1^{m}) = \tr(h) + cm\lambda.
\end{equation}
We need to show that $c\geq 1$. To that end we note that for each $h_\ell$, $\ell=1,\ldots, q-1$, each matrix entry is of the form
\[
 \frac{\sin\left(\frac{a}{q}\pi\right)}{\sin\frac{\pi}{q}}  
\begin{cases}
 = 0 & \text{for $a\in\{0,q\}$}
\\
= 1 & \text{for $a\in\{1,q-1\}$}
\\
> 1 & \text{otherwise}
\end{cases}
\]
for some $a=a(\ell)\in \{0,\ldots, q\}$. If $h$ is not of the form $h'h_1$ for some $h'=h_{a_1}\cdots h_{a_p}$ with $p\in\N$, $a_1,\ldots, a_p\in\{1,\ldots, q-1\}$ then a straightforward induction shows that $c\geq 1$. In turn, 
\[
 \tr(h h_1^{m}) \geq \tr(h) + m\lambda.
\]
Further, if $\discr(hh_1^m) \leq D$ then 
\[
 m\leq \frac{\sqrt{D+4}}{\lambda}.
\]
Analogous results hold for $k=q-1$. 

Therefore, if $g$ is of the form \eqref{standard} with $\discr(g)\leq D$ then among the indices $k_1,\ldots, k_n$ there are at most $\sqrt{D+4}/\lambda$ appearances of $1$, $\sqrt{D+4}/\lambda$ appearances of $q-1$, and at most $\sqrt{D+4}/\delta$ indices other than $1$ and $q-1$. Thus, 
\[
 n\leq \frac{2\sqrt{D+4}}{\lambda} + \frac{\sqrt{D+4}}{\delta} \seqc n_0.
\]
Hence, there are at most $qn_0$ such elements $g$. This completes the proof.
\end{proof}

\section{The decision problem for Hecke triangle groups}\label{whenHecke}

The coefficients of the indefinite binary quadratic form $f = [1,1,-1]$ are obviously contained in $\Z[\lambda]$ for any $\lambda=\lambda_q = 2\cos\tfrac{\pi}{q}$, $q\in\N_{\geq 3}$. However, as we prove in Corollary~\ref{notinhecke} below, the projective oriented quadratic form $[f,+]$ is a $\Gamma_q$-form only for $q=3$, thus only for $\PSL_2(\Z)$. Therefore an algorithm is desirable which decides for any form $F\in\POQF$ with coefficients in $\Z[\lambda]$ whether or not $F$ is a $\Gamma$-form. 

As it is well-known, any fundamental domain for $\Gamma$ in $\h$ gives rise to such an algorithm. More precisely, it gives rise to an algorithm which solves the analogous problem on the level of (projective) matrices, i.\,e., it decides for any $g\in\PSL_2(\R)$ whether or not $g\in\Gamma$. Algorithm of this type are essentially based on tracking some point in $\h$. In this section we present an algorithm which takes advantage of the fundamental set $\wt{\mc F}$ for $\Gamma$ in $S\h$ (recall \eqref{funddomSH}), which allows us to deduce an upper estimate on its run time. This algorithm tracks two points in $P^1(\R)$, namely $0$ and $\infty$. 

For $g\in\PSL_2(\R)$ we define
\[
\sign(g) \sceq 
\begin{cases}
1 & \text{\begin{minipage}{8cm}some (and hence any) representative of $g$ in $\SL_2(\R)$ has at least one positive \textbf{and} one negative entry\end{minipage}}
\\[2mm]
0 & \text{otherwise.}
\end{cases}
\]
Further, if $x\in [g_k^{-1}.0,g_k^{-1}.\infty)$ for some $k\in\{1,\ldots, q-1\}$ then we set
\[
 \bdinterval(x) \sceq k.
\]

\begin{algorithm}
\caption{The decision algorithm}\label{algo_hecke}
\begin{algorithmic}[1]
\Statex \textbf{Input:}  $g\in \PSL_2(\R)$
\Statex
\Statex\Comment{Shift $g.0$ and $g.\infty$ into $[0,\infty)$.} 
\State $n \gets \min\{\ell\in\N_0 \mid \text{$T^\ell g.0>0$ and $T^\ell g.\infty>0$}\}$ 
\State $a_1 \gets T^n$
\State $h_1 \gets a_1g$ \Comment{We have $h_1.0>0$ and $h_1.\infty>0$.}
\Statex
\If{$h_1.0>h_1.\infty$} \Comment{Change order of $h_1.0$ and $h_1.\infty$ is necessary.}
\State $a_2\gets S$
\Else \State $a_2\gets \id$
\EndIf
\State $h_2\gets h_1a_2$ \Comment{We have $h_2.\infty>h_2.0\geq 0$.}
\Statex
\State $j\gets 2$ \Comment{Apply iteratively Proposition~\ref{distrC}\eqref{Cv} and \eqref{Ci}.}
\While{$h_j\not=\id$ and $\sign(h_j) = 0$} 
\State $k\gets \bdinterval(h_j.0)$
\State $a_{j+1}\gets g_k$
\State $h_{j+1}\gets a_{j+1}h_j$
\State $j\gets j+1$
\EndWhile
\Statex
\If{$h_j=\id$}
\State \textbf{return} $g=a_1^{-1}\cdots a_j^{-1}a_2^{-1}$ is in $\Gamma$. 
\Else \textbf{ return} $g$ is not in $\Gamma$.
\EndIf
\end{algorithmic}
\end{algorithm}

A combination of Proposition~\ref{distrC} with the fact that each $g_k^{-1}$, $k=1,\ldots, q-1$, has a representative in $\SL_2(\R)$ all of whose entries are nonnegative enables us to prove the correctness of Algorithm~\ref{algo_hecke}. To estimate the maximal run time of Algorithm~\ref{algo_hecke} let
\begin{equation}\label{gap}
 \delta_0\sceq1-\min_{a=1}^{q-2} \left\{ \frac{\sin\left(\frac{a+1}{q}\pi\right) + \sin\left(\frac{a}{q}\pi\right)}{\sin\frac{\pi}{q}} \right\}
\end{equation}
and note that $\delta_0>0$. For $p=\textmat{a}{b}{c}{d}\in \SL_2(\R)$ let 
\[
 \Sigma(p) \sceq a+b+c+d
\]
denote the sum of the matrix entries of $p$. For $g=\textbmat{a}{b}{c}{d}\in \PSL_2(\R)$ set 
\[
 \Sigma(g) \sceq |a+b+c+d|,
\]
which is the maximum of the entries sum of the two representatives of $g$ in $\SL_2(\R)$.

\begin{thm}\label{heckealgoOK}
For any $g\in\PSL_2(\R)$, Algorithm~\ref{algo_hecke} decides whether $g\in\Gamma$ or not. Let $h_2=h_2(g)$ denote the element in $\PSL_2(\R)$ obtained after performing lines~1-9. Then the while loop in lines~11-16 is repeated at most
\[
 m=\left\lceil \frac{\Sigma(h_2)}{\delta_0}\right\rceil
\]
times. Moreover, if $g\in\Gamma$ then Algorithm~\ref{algo_hecke} provides a presentation of $g$ as a word over the alphabet $\{g_1^{-1},\ldots, g_{q-1}^{-1}, S, T\}$.
\end{thm}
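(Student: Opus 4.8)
The plan is to verify correctness and termination by tracking the effect of the algorithm on the pair $(g.0, g.\infty) \in P^1(\R)^2$ and invoking Proposition~\ref{distrC}. First I would handle the preprocessing in lines~1--9: since $T = \textbmat{1}{\lambda}{0}{1}$ acts on $P^1(\R)$ by $x \mapsto x + \lambda$ (with $\infty \mapsto \infty$), iterating $T$ eventually pushes any real value of $g.0$ and $g.\infty$ into $(0,\infty)$, so the minimum defining $n$ in line~1 exists and $h_1 = T^n g$ satisfies $h_1.0, h_1.\infty \in \R_{\geq 0} \cup \{\infty\}$ with both positive (or one equal to $\infty$). Applying $S$ if necessary (lines~4--9) arranges $h_2.\infty > h_2.0 \geq 0$. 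Crucially, $g \in \Gamma$ if and only if $h_2 \in \Gamma$, since $a_1 = T^n$ and $a_2 \in \{S, \id\}$ are in $\Gamma$.

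Next I would analyze the while loop (lines~11--16). If $h_j \in \Gamma$ with $h_j.\infty > h_j.0 \geq 0$ and $h_j \neq \id$, then by Proposition~\ref{distrC}\eqref{Ci} we have $h_j = g_{k_1}^{-1} \cdots g_{k_\ell}^{-1}$ with $\ell \geq 1$, and by part~(iii)/(iv) of that proposition the first factor $g_{k_1}$ is determined by $h_j.0 \in [g_{k_1}^{-1}.0, g_{k_1}^{-1}.\infty)$, i.e.\ $k_1 = \bdinterval(h_j.0)$; then $g_{k_1} h_j = g_{k_2}^{-1}\cdots g_{k_\ell}^{-1} \in \Gamma$ still satisfies the ordering hypothesis and has strictly shorter length. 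This shows the loop makes progress and, if $g \in \Gamma$, terminates with $h_j = \id$, yielding the claimed word $g = a_1^{-1} a_2^{-1} a_3^{-1} \cdots a_j^{-1}$ over $\{g_1^{-1},\dots,g_{q-1}^{-1}, S, T\}$. Conversely, if $h_j \notin \Gamma$, the loop condition involves $\sign(h_j)$: each $g_k^{-1}$, $k = 1,\dots,q-1$, has a representative in $\SL_2(\R)$ with all entries nonnegative, so any element of $\Gamma$ of the form $g_{k_1}^{-1}\cdots g_{k_\ell}^{-1}$ has a nonnegative representative; an element with $\sign(h_j) = 1$ (a representative with both a positive and a negative entry) cannot be of this form, hence cannot be in $\Gamma$ given the ordering condition, and we correctly return "not in $\Gamma$". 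The stopping condition $h_j = \id$ or $\sign(h_j) = 0$ must be shown exhaustive: if $h_j \in \Gamma$ we keep reducing length, so $h_j = \id$ is reached; if $h_j \notin \Gamma$, I would argue that after finitely many correct reduction steps (each removing the leftmost $g_k^{-1}$ that $\bdinterval$ prescribes) the entries must eventually exhibit mixed signs — here is where a quantitative argument is needed.

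For the run-time estimate I would track $\Sigma(h_j)$, the absolute entry sum. Using the explicit matrices $h_k = h_{k_j}$ from \eqref{formulargk} with nonnegative entries, one computes that left-multiplication by $g_k$ (the inverse of $h_k$, which has a sign change) interacts with $\Sigma$ so that each loop iteration \emph{decreases} $\Sigma(h_j)$ by at least $\delta_0$, where $\delta_0$ is the gap defined in \eqref{gap}; concretely, because the relevant combination $\bigl(\sin(\tfrac{a+1}{q}\pi) + \sin(\tfrac{a}{q}\pi)\bigr)/\sin\tfrac{\pi}{q} \leq 1 - \delta_0$ for $a = 1,\dots,q-2$, peeling off one generator shrinks the entry sum by at least $\delta_0$. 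Since $\Sigma(h_j) \geq 0$ throughout (the intermediate matrices stay nonnegative as long as we have not detected failure) and $\Sigma(h_2)$ is the starting value, the loop runs at most $\lceil \Sigma(h_2)/\delta_0 \rceil$ times. The main obstacle I expect is precisely this monotonicity/decay estimate for $\Sigma$: one must check, via the trigonometric identities for the entries of $h_k$ and the fact that $\bdinterval$ always selects the geometrically correct generator, that the entry sum genuinely drops by the uniform amount $\delta_0$ at every step rather than merely not increasing — and that the intermediate matrices remain controlled (nonnegative) so that $\Sigma$ is a valid nonnegative potential function. This is essentially the same estimate that drives the finiteness proof for reduced $\Gamma$-forms, adapted to the entry sum rather than the trace.
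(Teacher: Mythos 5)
Your proposal follows essentially the same route as the paper's proof: normalize to $h_2$ with $h_2.\infty>h_2.0\geq 0$ via $T^n$ and possibly $S$, invoke Proposition~\ref{distrC} so that for $h_2\in\Gamma$ the loop deterministically peels off the unique factors $g_{k_i}^{-1}$ (every partial product having a nonnegative representative, so the $\sign$-test cannot fire before $\id$ is reached), conclude that an exit with $\sign(h_j)=1$ certifies $g\notin\Gamma$, and bound the number of iterations by using the entry sum $\Sigma$ as a potential function that drops by a uniform $\delta_0$ per step. Two remarks. First, the inequality you cite in support of the decrement points the wrong way: what one needs is that the two column sums of the nonnegative representative $p_k$ of $g_k^{-1}$, namely $\bigl(\sin(\tfrac{k}{q}\pi)+\sin(\tfrac{k-1}{q}\pi)\bigr)/\sin\tfrac{\pi}{q}$ and $\bigl(\sin(\tfrac{k+1}{q}\pi)+\sin(\tfrac{k}{q}\pi)\bigr)/\sin\tfrac{\pi}{q}$, are both $\geq 1$ with at least one exceeding $1$ by a uniform gap, whence $\Sigma(p_k s)\geq\Sigma(s)+\delta_0$ once one knows $\min\{a+b,c+d\}\geq 1$ for $s$; the paper supplies this last input by induction over products of the $p_k$'s, which is precisely the ``main obstacle'' you flag but leave open. (Note that \eqref{gap} as printed yields a negative number, since the minimized quantity is $\geq 2$; the intended definition is $\min\{\cdots\}-1$, and your ``$\leq 1-\delta_0$'' inherits and compounds this sign confusion.) Second, a bookkeeping slip: $a_2$ multiplies on the right, so the returned word is $g=a_1^{-1}a_3^{-1}\cdots a_j^{-1}a_2^{-1}$ rather than $a_1^{-1}a_2^{-1}a_3^{-1}\cdots a_j^{-1}$.
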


\begin{proof}
Let $g\in\PSL_2(\R)$ and consider the ordered pair $(g.0,g.\infty)$. In order to be able to engage Proposition~\ref{distrC} for deciding whether $g\in\Gamma$, we multiply $g$ with elements $a_1, a_2\in\Gamma$ to achieve that $h_2\sceq a_1ga_2$ satisfies 
\begin{equation}\label{prepare1}
h_2.\infty>h_2.0\geq0.
\end{equation}
In lines~1-3 of Algorithm~\ref{algo_hecke} we choose $a_1\sceq T^n$ to be the minimal nonnegative power of $T$ such that 
\[
 a_1g.0 = g.0+n\lambda \geq 0 \quad\text{and}\quad a_1g.\infty = g.\infty + n\lambda \geq 0.
\]
If $a_1g.0 > a_1g.\infty$ then we set $a_2\sceq S$ to achieve \eqref{prepare1}. If \eqref{prepare1} is already satisfied by $a_1g$ then we set $a_2\sceq \id$. This is done in lines~4-8 of Algorithm~\ref{algo_hecke}.

Suppose for a moment that $h_2\in\Gamma$. Then, by Proposition~\ref{distrC}\eqref{Ci}, 
\[
 h_2 = g_{k_1}^{-1}\cdots g_{k_m}^{-1}
\]
for unique elements $m\in\N_0$ and $k_1,\ldots, k_m\in\{1,\ldots, q-1\}$. Unless $m=0$, in which case $h_2=\id$, Proposition~\ref{distrC}\eqref{Cv} shows how to determine $k_1$. Consider
\[
 h_3\sceq g_{k_1}h_2 = g_{k_2}^{-1}\cdots g_{k_m}^{-1}.
\]
Note that $h_3.\infty > h_3.0\geq 0$. Thus, Proposition~\ref{distrC}\eqref{Cv} actually allows us to iteratively determine $k_1,\ldots, k_m$. For $j=2,\ldots, m+1$ let
\[
 h_j\sceq g_{k_{j-1}}\cdots g_{k_2}g_{k_1} h_2 = g_{k_j}^{-1}\cdots g_{k_m}^{-1}.
\]
Note that $h_{m+1} = \id$. Since each $g_k^{-1}$, $k=1,\ldots, q-1$, has a representative in $\SL_2(\R)$ all of whose entries are nonnegative (see \eqref{formulargk}),  also  $h_j$ for $j=2,\ldots, m+1$ is represented by a matrix in $\SL_2(\R)$ with  nonnegative entries only, thus $\sign(h_j) = 0$. 

This shows that if $h_2$ is indeed in $\Gamma$ then lines~10-16 of Algorithm~\ref{algo_hecke} detect the presentation of $h_2$ as given by Proposition~\ref{distrC}, determine correctly that $h_2\in\Gamma$ and return back a correct presentation of $g$ in line~18. 

We now show that Algorithm~\ref{algo_hecke} also termines if $g\notin\Gamma$ and that it detects this fact, and estimate simultaneously the maximal number of repetitions of the while loop.  For $k=1,\ldots, q-1$ let $p_k$ denote the representative of $g_k^{-1}$ in $\SL_2(\R)$ with nonnegative entries only (cf.\@ \eqref{formulargk}).  We consider the growth of the sums of the matrix entries of products of the form
\begin{equation}\label{product}
 p_{k_m}\cdots p_{k_1}
\end{equation}
as $m\to \infty$. For $s=\textmat{a}{b}{c}{d}\in \SL_2(\R)$ and any $k\in\{1,\ldots, q-1\}$ we have
\[
 \Sigma(p_ks)\sceq (a+b) \frac{\sin\left(\frac{k}{q}\pi\right)+\sin\left(\frac{k-1}{q}\pi\right)}{\sin\frac{\pi}{q}} + (c+d) \frac{\sin\left(\frac{k+1}{q}\pi\right) + \sin\left(\frac{k}{q}\pi\right)}{\sin\frac{\pi}{q}}.
\]
Note that 
\begin{align*}
&\max_{k=1}^{q-1} \left\{ \frac{\sin\left(\frac{k}{q}\pi\right)+\sin\left(\frac{k-1}{q}\pi\right)}{\sin\frac{\pi}{q}}, \frac{\sin\left(\frac{k+1}{q}\pi\right) + \sin\left(\frac{k}{q}\pi\right)}{\sin\frac{\pi}{q}}\right\} 
\\ &
\qquad
> 
\min_{k=1}^{q-1} \left\{ \frac{\sin\left(\frac{k}{q}\pi\right)+\sin\left(\frac{k-1}{q}\pi\right)}{\sin\frac{\pi}{q}}, \frac{\sin\left(\frac{k+1}{q}\pi\right) + \sin\left(\frac{k}{q}\pi\right)}{\sin\frac{\pi}{q}}\right\} 
= 1.
\end{align*}
Further note that for 
\[
 \mat{a'}{b'}{c'}{d'}\sceq p_n, \qquad (n\in\{1,\ldots, q-1\})
\]
we have
\begin{align*}
a'+b' &= \frac{\sin\left(\frac{n+1}{q}\pi\right) + \sin\left(\frac{n}{q}\pi\right)}{\sin\frac{\pi}{q}}
\intertext{and}
c'+d' & = \frac{\sin\left(\frac{n}{q}\pi\right)+\sin\left(\frac{n-1}{q}\pi\right)}{\sin\frac{\pi}{q}}.
\end{align*}
Thus 
\[
 \min\{ a'+b', c'+d'\} \geq 1.
\]
Suppose now that $s$ is of the form \eqref{product}. Straightforward induction yields
\[
 \min\{ a+b, c+d \} \geq 1.
\]
Therefore, with $\delta_0$ from \eqref{gap}, we find
\[
 \Sigma(p_ks) > \Sigma(s) + \delta_0. 
\]
Hence, $\Sigma(p_{k_m}\cdots p_{k_1}) \geq  m\delta_0$.

Suppose that $g\in\PSL_2(\R)$ with $\sign(g) = 0$. Let $\wt g$ be the representative of $g$ in $\SL_2(\R)$ with only nonnegative entries. In each repetition of the loop in lines~11-16 of Algorithm~\ref{algo_hecke}, $\Sigma(\wt g)$ gets decreased by at least $\delta_0$. Thus, after at most 
\[
 m = \left\lceil \frac{\Sigma(\wt g)}{\delta_0} \right\rceil
\]
repetitions of this loop, either the arising element $h_{j}$ equals $\id$ (and $g$ is detected to be in $\Gamma$) or it satisfies $\sign(h_j)=1$ (and $g$ is detected to not be in $\Gamma$). This completes the proof.
\end{proof}

\begin{cor}\label{notinhecke}
The projective oriented quadratic form $F=[f,+]$ with $f=[1,1,-1]$ is a $\PSL_2(\Z)$-form, but it is not a $\Gamma_q$-form for $q>3$.
\end{cor}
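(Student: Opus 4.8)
The plan is to identify the hyperbolic matrix associated to $F$ via Theorem~\ref{mqfm} and then show that this matrix lies in $\PSL_2(\Z)$ but in no $\Gamma_q$ with $q>3$, using essentially one step of the decision procedure of Section~\ref{whenHecke}. For $f=[1,1,-1]$ one has $D=B^2-4AC=5$, hence $D+4=9$ and $\sqrt{D+4}=3$, so with $s=+$ Theorem~\ref{mqfm} gives
\[
 \mapvier(F)=\bmat{\frac{-1+3}{2}}{1}{1}{\frac{1+3}{2}}=\bmat{1}{1}{1}{2}\seqc g
\]
(consistently, $\mapvier^{-1}\bmat{1}{1}{1}{2}=[1,1,-1,+]=F$). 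Since $|\tr g|=3>2$, the matrix $g$ is hyperbolic, and clearly $g\in\PSL_2(\Z)=\Gamma_3$; thus $\mapvier(F)\in\Gamma_3\cap G_h$, so $F$ is a $\PSL_2(\Z)$-form. As $F$ is a $\Gamma_q$-form if and only if $\mapvier(F)=g\in\Gamma_q$, it remains to prove $g\notin\Gamma_q$ for $q>3$.

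Assume, for contradiction, that $g\in\Gamma_q$ with $q>3$. From $g.0=\tfrac12$ and $g.\infty=1$ we get $g.\infty>g.0\geq 0$, so Proposition~\ref{distrC}\,(i) yields a unique presentation $g=g_{k_1}^{-1}\cdots g_{k_\ell}^{-1}$ with $\ell\in\N_0$ and $k_i\in\{1,\dots,q-1\}$, and $\ell\geq 1$ because $g\neq\id$. By Proposition~\ref{distrC}\,(iii), $k_1$ is the unique index with $g.0=\tfrac12\in[g_{k_1}^{-1}.0,g_{k_1}^{-1}.\infty)$. Now $\lambda=2\cos\tfrac{\pi}{q}<2$, and by \eqref{formulargk} one has $g_{q-1}^{-1}=\bmat{1}{0}{\lambda}{1}$, so $[g_{q-1}^{-1}.0,g_{q-1}^{-1}.\infty)=[0,\tfrac1\lambda)\ni\tfrac12$; hence $k_1=q-1$.

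Consequently $g_{q-1}=\bmat{1}{0}{-\lambda}{1}$ and
\[
 g_{q-1}g=\bmat{1}{0}{-\lambda}{1}\bmat{1}{1}{1}{2}=\bmat{1}{1}{1-\lambda}{2-\lambda}=g_{k_2}^{-1}\cdots g_{k_\ell}^{-1}.
\]
By \eqref{formulargk} every $g_k^{-1}$ has a representative in $\SL_2(\R)$ with only nonnegative entries, hence so does any finite product of the $g_k^{-1}$, in particular the right-hand side above. But for $q>3$ we have $\lambda>1$, so the entry $1-\lambda$ is negative while $1$ is positive; neither of the two $\SL_2(\R)$-representatives $\pm\textmat{1}{1}{1-\lambda}{2-\lambda}$ of $g_{q-1}g$ is entrywise nonnegative. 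This contradiction shows $g\notin\Gamma_q$ for $q>3$, which finishes the proof.

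I expect no serious obstacle: the whole argument is a short computation amounting to a single pass of the while loop in Algorithm~\ref{algo_hecke}. The only points to get right are locating $\tfrac12$ in the interval partition of $[0,\infty)$, so that the first reduction step uses $g_{q-1}$, and checking that this one step already produces a matrix carrying a sign obstruction when $q>3$; symmetrically, for $q=3$ the same step yields $g_{q-1}g=\bmat{1}{1}{0}{1}$, which is nonnegative, in agreement with $g\in\Gamma_3$.
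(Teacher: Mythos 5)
Your proof is correct and takes essentially the same route as the paper: compute $\mapvier(F)=\textbmat{1}{1}{1}{2}\seqc g$, observe $g\in\PSL_2(\Z)$, and run the decision procedure of Section~\ref{whenHecke} to exclude $g$ from $\Gamma_q$ for $q>3$. The paper only asserts that Algorithm~\ref{algo_hecke} rejects $g$; you have supplied the omitted computation, namely the single pass of the loop via Proposition~\ref{distrC} giving $k_1=q-1$ and the sign obstruction $1-\lambda<0$ in $g_{q-1}g=\textbmat{1}{1}{1-\lambda}{2-\lambda}$, which is exactly what the algorithm does.
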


\begin{proof}
The element in $\PSL_2(\R)$ associated to $F$ by the isomorphism $\mapvier$ (see Theorem~\ref{mqfm}) is
\[
 g \sceq \bmat{1}{1}{1}{2},
\]
which obviously is an element of $\PSL_2(\Z)$. Algorithm~\ref{algo_hecke} shows that $g$ is not contained in $\Gamma_q$ for $q>3$. This completes the proof.
\end{proof}

\bibliography{ap_bib}

\providecommand{\bysame}{\leavevmode\hbox to3em{\hrulefill}\thinspace}
\providecommand{\MR}{\relax\ifhmode\unskip\space\fi MR }
\providecommand{\MRhref}[2]{%
  \href{http://www.ams.org/mathscinet-getitem?mr=#1}{#2}
}
\providecommand{\href}[2]{#2}
\begin{thebibliography}{10}

\bibitem{Artin}
E.~Artin, \emph{Ein mechanisches {S}ystem mit quasiergodischen {B}ahnen}, Abh.
  Math. Sem. Univ. Hamburg \textbf{3} (1924), 170--175.

\bibitem{Gauss}
C.~{Gauss}, \emph{{Disquisitiones arithmeticae. (Translated by Arthur A.
  Clarke)}}, {New Haven and London: Yale University Press}, 1966.

\bibitem{Hurwitz_cf}
A.~{Hurwitz}, \emph{{Ueber eine besondere Art der Kettenbruch-Entwickelung
  reeller Gr\"ossen}}, {Acta Math.} \textbf{12} (1889), 367--405.

\bibitem{Hurwitz_red}
\bysame, \emph{{Ueber die Reduction der bin\"aren quadratischen Formen}},
  {Math. Ann.} \textbf{45} (1894), 85--117.

\bibitem{Moeller_Pohl}
M.~M{\"o}ller and A.~Pohl, \emph{Period functions for {H}ecke triangle groups,
  and the {S}elberg zeta function as a {F}redholm determinant}, Ergodic Theory
  Dynam. Systems \textbf{33} (2013), no.~1, 247--283.

\bibitem{Pohl_spectral_hecke}
A.~Pohl, \emph{Odd and even {M}aass cusp forms for {H}ecke triangle groups, and
  the billiard flow}, arXiv:1303.0528, to appear in Ergodic Theory Dynam.
  Systems.

\bibitem{Pohl_diss}
\bysame, \emph{Symbolic dynamics for the geodesic flow on locally symmetric
  good orbifolds of rank one}, 2009, dissertation thesis, University of
  Paderborn, http://d-nb.info/gnd/137984863.

\bibitem{Pohl_mcf_general}
\bysame, \emph{A dynamical approach to {M}aass cusp forms}, J. Mod. Dyn.
  \textbf{6} (2012), no.~4, 563--596.

\bibitem{Pohl_gamma}
\bysame, \emph{Period functions for {M}aass cusp forms for ${\Gamma}_0(p)$: A
  transfer operator approach}, Int. Math. Res. Not. \textbf{14} (2013),
  3250--3273.

\bibitem{Pohl_Symdyn2d}
\bysame, \emph{Symbolic dynamics for the geodesic flow on two-dimensional
  hyperbolic good orbifolds}, Discrete Contin. Dyn. Syst., Ser. A \textbf{34}
  (2014), no.~5, 2173--2241.

\bibitem{Ressler_reduction}
W.~{Ressler}, \emph{{On binary quadratic forms and the Hecke groups}}, {Int. J.
  Number Theory} \textbf{5} (2009), no.~8, 1401--1418.

\bibitem{Series}
C.~Series, \emph{The modular surface and continued fractions}, J. London Math.
  Soc. (2) \textbf{31} (1985), no.~1, 69--80.

\end{thebibliography}
\bibliographystyle{amsplain}

\end{document}